\theoremstyle{plain}
\newtheorem{theorem}{Theorem}[section]
\newtheorem{lemma}[theorem]{Lemma}
\newtheorem{corollary}[theorem]{Corollary}
\newtheorem{proposition}[theorem]{Proposition}
\theoremstyle{definition}
\newtheorem{definition}[theorem]{Definition}
\newtheorem{remark}[theorem]{Remark}
\newtheorem{example}[theorem]{Example}
\newcommand{\lp}{\left(} % left parentheses (
\newcommand{\rp}{\right)} % right parentheses )
\newcommand{\dash}{\mathchar `\-}
\newcommand{\tnorm}[1]{
  \left\vert\kern-0.9pt\left\vert\kern-0.9pt\left\vert #1
    \right\vert\kern-0.9pt\right\vert\kern-0.9pt\right\vert}
\newcommand{\dd}{\,{\rm d}}
\newcommand{\inj}{{\rm inj}}
\newcommand{\support}{\mathop{\rm supp\,}}
\newcommand {\AB}{{}_A\mathcal{B}}
\newcommand {\Amod}{A\dash{\bf mod}}
\newcommand {\Lmod}{L^{1}(G)\dash{\bf mod}}
\newcommand {\lmod}{\ell^{\, 1}(G)\dash{\bf mod}}
\newcommand {\lSmod}{\ell^{\, 1}(S)\dash{\bf mod}}
\newcommand {\modA}{{\bf mod}\dash A}
\newcommand {\modL}{{\bf mod}\dash L^{1}(G)}
\newcommand {\AmodA}{A\dash{\bf mod}\dash A}
\newcommand {\LmodL}{L^1(G)\dash{\bf mod}\dash L^1(G)}
\newcommand{\enproof}{\hspace*{\stretch{1}}$\square$}%\qedsymbol}  % end of proof box
\newcommand{\norm}[1]{\|#1\|}  % norm
\newcommand{\flexiblenorm}[1]{\left\|#1\right\|}
\newcommand{\multibound}{\operatorname{mb}}
\newcommand{\Lspace}{\mathit{L}}
\newcommand{\Lone}{\Lspace^1}
\newcommand{\Linfty}{\Lspace^\infty}
\newcommand{\lspace}{\ell}
\newcommand{\lone}{\lspace^{\,1}}
\newcommand{\linfty}{\lspace^{\,\infty}}
\newcommand{\co}{\mathit{c}_{\,0}}
\newcommand{\coo}{\mathit{c}_{00}}
\newcommand{\C}{\mathit{C}}
\newcommand{\set}[1]{\left\{#1\right\}}  % set
\newcommand{\complexs}{\mathbb{C}}    % complex numbers
\newcommand{\reals}{\mathbb{R}}    % real numbers
\newcommand{\naturals}{\mathbb{N}}    % natural numbers
\newcommand{\tuple}[1]{\boldsymbol{#1}}
\newcommand{\E}{\mathcal{E}}
\newcommand{\V}{\mathcal{V}}
\newcommand{\abs}[1]{\left|#1\right|}
\newcommand{\cardinal}[1]{\left|#1\right|}
\newcommand{\dual}[1]{#1^\prime}
\newcommand{\bidual}[1]{#1^{\prime\prime}}
\newcommand{\duality}[2]{\left\langle #1,#2 \right\rangle}
\newcommand{\operators}{\mathcal{B}}
\newcommand{\compactoperators}{\mathcal{K}}
\newcommand{\finiterankoperators}{\mathcal{F}}
\newcommand{\nuclearoperators}{\mathcal{N}}
\newcommand{\mboperators}{\mathcal{M}}
\newcommand{\id}{I}
\newcommand{\projectivetensor}{\,\widehat{\otimes}\,}
\newcommand{\injectivetensor}{\!\stackrel{\textbf{\tiny \reflectbox{\rotatebox[origin=c]{90}{$\!\!\!\mathbf{\langle}\!$}}}}{\otimes}\!}
\title{Multi-norms and the injectivity of $\Lspace^p(G)$}
\author{H.\ G.\ Dales,  M.\ Daws, H.\ L.\ Pham, and P.\ Ramsden}
\begin{document}
\maketitle

\begin{abstract}
Let $G$ be a locally compact group, and take $p\in(1,\infty)$. We prove that the Banach left $\Lone(G)$-module $\Lspace^p(G)$ is injective (if and) only if the group $G$ is  amenable. Our proof uses the notion of multi-norms. We also develop the theory of multi-normed spaces. 

(2010) Subject classification: 46H25, 43A20.
\end{abstract}

\section{Introduction}

\noindent Let $G$ be a locally compact group, and let $\Lone(G)$ be the group algebra of $G$. In \cite{DP}, H. G. Dales and M. E. Polyakov investigated
 when various canonical modules over $\Lone(G)$ have certain well-known homological properties. For example, it was proved in \cite[Theorem 4.9]{DP} that $\Lone(G)$
 is injective in $\Lmod$, the category of Banach left $L^1(G)$-modules,  if and only if $G$ is discrete and amenable, and in \cite[Theorem 2.4]{DP} that $\Linfty(G)$
 is injective in $\Lmod$ for every locally compact group $G$.

One of the more difficult questions that they considered seems to have been to characterize the locally compact groups $G$ such that the Banach left $\Lone(G)$-module 
$\Lspace^{p}(G)$ is injective (for $1<p< \infty$).  By Johnson's famous theorem \cite{Johnson72}, the Banach algebra $\Lone(G)$ is amenable if and only
 if $G$ is an amenable group.  Since $\Lspace^p(G)$ is a dual Banach $\Lone(G)$-module, it follows from  
 \cite[VII.2.2]{Helemskii86} and from \cite[\S 5.3]{Runde} that $\Lspace^{p}(G)$ is an injective Banach left $\Lone(G)$-module whenever $G$ is amenable as a locally compact group; 
the converse has been an open problem for a long time. In \cite{DP}, the authors obtained a partial converse to  
 this theorem in the case where $G$ is discrete. Indeed, they showed the following \cite[Theorem 5.12]{DP}. Let $G$ be a group, and suppose that $\lspace^{\, p}(G)$ is an injective Banach left $\lone(G)$-module for some $p \in (1,\infty)$. Then $G$ must be `pseudo-amenable', a property very close to amenability. (In fact, no example of a group that is pseudo-amenable, but not amenable, is known.)  

In this paper, we shall define another generalized notion of amenability, called {\it left $(p,q)$-amenability} of $G$, for any $p,q$ such that $1\leq p\leq q<\infty$ and for any locally compact group $G$. We shall show the following for each $p,q$ with $1<p\le q<\infty$:
\[
\Lspace^{p}(G) \text{~is injective} \iff G \textrm{~is left }(p,q)\dash\text{amenable}  
\iff G \text{~is amenable}\,.
\]
In particular, we resolve positively the above open problem. As a consequence, we shall also determine when the module $\Lspace^p(G)$ is flat. 

In the final section \S \ref{Semigroup algebras}, we shall give some similar results for the modules $\lspace^{\,p}(S)$, regarded as Banach left $\lone(S)$-modules, for a cancellative semigroup $S$.

Our definition of left $(p,q)$-amenability is framed in the language of `multi-norms'. The theory of multi-norms was developed by Dales and Polyakov in an attempt to resolve 
the above-mentioned problem. However, this theory has developed a life of its own; it is expounded at some length in \cite{DP08}, where many examples are given, and the 
connection with various known `summing norms' and `summing constants' is explained. We shall give a presentation of multi-norms and their duals in terms of certain tensor
 norms in \S \ref{Multi-normed spaces as tensor norms} and \S \ref{Dual multi-normed spaces}.
\medskip

After this paper was submitted for publication, we received  the preprint \cite{Racher2} from Professor Gerhard Racher (Salzburg).  This preprint states the following
  more general version  of Theorem 9.6. `Let $G$ be a locally compact group.  Suppose that there exists a non-zero, injective  Banach left $L^1(G)$-module that  is reflexive as a Banach space. Then $G$ is amenable as a locally compact group.' We thank Professor  Racher for sending us this preprint.

\medskip

\noindent\textbf{Acknowledgments:}
We are grateful to the referee for some valuable comments.
We would like to acknowledge the financial support of EPSRC under grant EP/H019405/1 awarded to H. G. Dales, of the London Mathematical Society under grant Scheme 2, ref. 2901 awarded  to M. Daws, and of the Marsden Fund (the Royal Society of New Zealand) awarded to H. L. Pham.

\section{Background and notation}

\noindent In this section, we shall recall various notations that we shall use, and give the definitions and some properties of multi-norms and multi-bounded operators. We shall also recall the definitions of the group algebra $\Lone(G)$ and the Banach left $\Lone(G)$-modules $\Lspace^p(G)$ for a locally compact group $G$ and $p\ge 1$.

\subsection{Banach spaces}

\noindent For $n \in \naturals=\set{1,2,\ldots}$, we set $\naturals_n=\set{1,\ldots,n}$. The cardinality of a set $S$ is $\cardinal{S}$, and the characteristic function of a subset $T$ of  $S$ is denoted by $\chi_{T}$; we set $\delta_{s}=\chi_{\set{s}}\,\; (s \in S)$. 
The conjugate to a number $p\ge 1$ is sometimes denoted by $p'$, so that $1/p+1/p'=1$.

Let $E$ be a linear space. The identity operator on $E$ is $\id_E$. For each $k \in \naturals$, we denote by $E^k$ the linear space direct product of $k$ copies of $E$.   
Let $F$ be another linear space, and let $T:E\rightarrow F$ be a linear mapping. Then we define the linear map $T^{(k)}:E^k\rightarrow F^k$, the $k^{\rm th}$-\emph{amplification} of $T$,  by
\[
T^{(k)}(x_{1},\ldots,x_k)=(Tx_1,\ldots,Tx_k)\quad (x_1,\ldots, x_k \in E)\,.
\]

Let $E$ be a normed space. Then the closed unit ball of $E$ is denoted by $E_{[1]}$. We denote the dual space of $E$ by $\dual{E}$; the action of $\lambda \in \dual{E}$ on an element $x \in E$ is written as $\duality{x}{\lambda}$. 

Let $E$ and $F$ be normed spaces. Then $\operators(E, F)$ is the normed space of all bounded linear operators from $E$ to $F$ with the operator norm; the dual of an operator $T\in \operators(E, F)$ is denoted by $\dual{T}\in \operators(\dual{F}, \dual{E})$.  The subspaces of $\operators(E,F)$ consisting of the finite-rank  
and of the compact operators are denoted by $\finiterankoperators(E,F)$ 
and $\compactoperators(E,F)$, respectively; we write $\finiterankoperators(E)$  
and $\compactoperators(E)$ for $\finiterankoperators(E,E)$  
and $\compactoperators(E,E)$, respectively. For $\lambda \in \dual{E}$ and $y \in F$, we define the rank-one operator $\lambda\otimes y\in \operators(E, F)$ by
\begin{align}\label{tensor form of finite-rank}
(\lambda\otimes y)(x)=\duality{x}{\lambda} y\quad (x \in E)\,;
\end{align}
in this way, we identify the tensor product $\dual{E}\otimes F$ with $\finiterankoperators(E,F)$.

Let $E$ be a normed space, and take $n\in\naturals$. Following the notation of \cite{DP08} and \cite{Jameson}, we define the \emph{weak $p$-summing norm}
 (for $1\leq p<\infty$) on $E^n$ by
\[
\mu_{p,n}(\tuple{x})=\sup \set{\left(\sum_{i=1}^{n}\abs{\duality{x_{i}}{\lambda}}^{p}\right)^{1/p}: \lambda \in \dual{E}_{[1]}}\index{$\mu_{p,n}(\tuple{x})$}\,,
\]
where $\tuple{x}=(x_1,\ldots,x_n) \in E^n$. See also \cite[p. 32]{DJT} and \cite[p. 134]{Ryan}.  
Notice that, by the weak$^*$-density of $E_{[1]}$ in $\bidual{E}_{[1]}$, the weak $p$-summing norm on $(\dual{E})^n$ can also be computed as
\begin{align}\label{weak p-summing norm on dual space}
	\mu_{p,n}(\tuple{\lambda})=\sup\set{\left(\sum_{i=1}^n\abs{\duality{x}{\lambda_i}}^p\right)^{1/p}\colon x\in E_{[1]}}\,,
\end{align}
where $\tuple{\lambda}=(\lambda_1,\ldots,\lambda_n)\in(\dual{E})^n$.

Let $K$ be a non-empty, locally compact space; our convention is that locally compact spaces are Hausdorff. Then $\C_0(K)$ is the
 Banach space of complex-valued, continuous functions which vanish at infinity on $K$, equipped with the uniform norm $\abs{\,\cdot\,}_{K}$, given by
\[
\abs{f}_{K}=\sup\set{ \abs{f(x)}: x \in K}\quad (f \in \C_0(K))\,.
\]
Further $\C_{00}(K)$ is the subspace of $\C_0(K)$ of functions with compact support.

Let $(\Omega,\mu)$ be a measure space, and take $p\ge 1$. Then $\Lspace^p(\Omega)=\Lspace^p(\Omega, \mu)$ is the Banach space of
 (equivalence classes of) complex-valued, $p\,$-integrable functions on $\Omega$, equipped with the norm $\norm{\cdot}_p$, given by
\[
\norm{f}_p=\lp \int_\Omega \abs{f}^p \dd \mu\rp^{1/p}\quad (f \in \Lspace^p(\Omega))\,.
\]
Of course, the dual of $\Lspace^p(\Omega)$ is identified with $\Lspace^{p'}(\Omega)$ when $p>1$.

Let $\co$ and $\lspace^{\,p}$ be the usual Banach spaces.  We write $(\delta_n)_{n=1}^\infty$ for the
standard basis for $\co$ and $\lspace^{\,p}$.  For $n\in\naturals$, we write $\linfty_n$
for $\complexs^n$ with the supremum norm, and we regard each $\linfty_n$ as a subspace
of $\co$, and hence regard $(\delta_i)_{i=1}^n$ as a basis for $\linfty_n$.

\subsection{Banach homology}

\noindent For the homological background to our work,  we refer the reader to the standard reference \cite{Helemskii86}; for a clear account of all that we require, see \cite[Chapter 5]{Runde}. We briefly sketch what we shall need. 

Let $A$ be a Banach algebra, and let $E$ be a Banach space that is a left $A$-module for the map
\[
	(a,x)\mapsto a\cdot x\,,\quad A\times E\to E\,.
\]
Then $E$ is a \emph{Banach left $A$-module} if there is a constant $C>0$ such that 
\[
	\norm{a\cdot x}\le C \norm{a}\norm{x}\quad(a\in A,\ x\in E)\,;
\]
we denote by $\Amod$ the category of Banach left $A$-modules. Similarly, $\modA$ and $\AmodA$ are the categories of Banach right $A$-modules and Banach $A$-bimodules, respectively. (See \cite{HGD}, \cite{Helemskii86}, \cite{Johnson72}, and \cite{Runde}, for example.)

Let $E \in \Amod$. Then $E$ is {\it essential\/} if the linear span of the elements $a\,\cdot\,x$ for $a\in A$ and $x \in E$ is dense in $E$. 
 In the case where  $A$ has a bounded left approximate identity, this implies \cite[Corollary 2.9.26]{HGD}  that $E$ is {\it neo-unital\/}, in the sense
 that each element in $E$ has the form  $a\,\cdot\,x$ for some $a\in A$ and $x \in E$.

Let $E\in \Amod$. Then the dual action of $A$ on $\dual{E}$ is defined by
\[
	\duality{x}{\lambda\cdot a}=\duality{a\cdot x}{\lambda}\quad(a\in A,\ x\in E,\ \lambda\in \dual{E})\,,
\]
and then $\dual{E}\in\modA$ is the \emph{dual module to}  $E$. Similarly, $\dual{E}\in\Amod$ when $E\in\modA$.

For spaces $E, F \in \Amod$, the Banach space of bounded $A$-module morphisms from $E$ to $F$ is denoted by $\AB(E, F)$. A monomorphism $T \in \AB(E, F)$ is said to be \emph{admissible} if there exists $S \in {\cal B}(F,E)$ such that \mbox{$S\,\circ\,T =\id_E$}, and $T$ is a {\it coretraction} if there exists $S \in \AB(F, E)$ such that $S\circ T=\id_{E}$.

\begin{definition} \label{1.1b}
Let $A$ be a Banach algebra, and let $J\in \Amod$.  Then $J$ is {\it injective} if, for each $E,F\in \Amod$, for each admissible monomorphism $T\in \AB(E,F)$, and for each $S\in \AB(E,J)$, there exists $R\in \AB(F,J)$ such that $R\,\circ\,T=S$.
\end{definition}

Let $A$ be a Banach algebra, and let $E$ be a Banach space. Then ${\cal B}(A,E) \in \Amod$ when we define the module operation by the formula
\[
(a\cdot T)(b)=T(ba)\quad (a, b\in A,\,T\in \operators(A, E))\,.
\]
Now suppose that $E \in \Amod$. Then we define the \emph{canonical embedding} $\Pi:E\rightarrow {\cal B}(A,E)$ by the formula
\[
\Pi(x)(a)=a\cdot x\quad (a\in A,\,x\in E)\,,
\]
so that $\Pi\in\AB(E,\operators(A,E))$.  The mapping $\Pi$ is indeed an embedding if $E$ has the property that $x=0$ whenever $x\in E$ and $a\,\cdot\,x = 0$ 
for all $a \in A$; i.e., $\{x\in E : A\,\cdot\,x= \{0\}\}= \{0\}$.  This property holds whenever $A$ has a bounded left approximate identity and $E$ is essential.

For background, we note the following characterization of injective modules {\cite[Proposition 1.7]{DP}}; we shall use related ideas.  

\begin{proposition}\label{2.2}
Let $A$ be a Banach algebra, and let $E\in \Amod$ have the property  that $\{x\in E : A\,\cdot\,x= \{0\}\}= \{0\}$.
Then the module $E$ is injective if and only if the morphism $\Pi\in{\AB}(E, \operators(A, E))$ is a coretraction in $\Amod$. \enproof 
\end{proposition}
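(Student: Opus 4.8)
The plan is to prove the two implications separately. The substantive content is in the implication ``coretraction $\Rightarrow$ injective'', where I would give an explicit construction; the reverse implication ``injective $\Rightarrow$ coretraction'' is a direct application of Definition \ref{1.1b} once one checks that $\Pi$ is an admissible monomorphism, and that check is the only delicate point.

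Suppose first that $\Pi$ is a coretraction, say $\rho\in\AB(\operators(A,E),E)$ with $\rho\circ\Pi=\id_E$. To verify that $E$ is injective I would take Banach left $A$-modules $X,Y$, an admissible monomorphism $T\in\AB(X,Y)$ with bounded linear left inverse $\sigma\in\operators(Y,X)$ (so $\sigma\circ T=\id_X$), and a morphism $S\in\AB(X,E)$, and produce $R\in\AB(Y,E)$ with $R\circ T=S$. The key idea is to route through $\operators(A,E)$ by the explicit map $\widetilde R\in\operators(Y,\operators(A,E))$ given by $\widetilde R(y)(a)=S(\sigma(a\cdot y))$ for $y\in Y$ and $a\in A$. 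Boundedness of $\widetilde R$ is immediate from the module bound on $Y$ and from $\norm{S}$ and $\norm{\sigma}$. The crucial verification is that $\widetilde R$ is an $A$-module morphism: evaluating at $a\in A$ gives $\widetilde R(b\cdot y)(a)=S(\sigma((ab)\cdot y))$ and $(b\cdot\widetilde R(y))(a)=\widetilde R(y)(ab)=S(\sigma((ab)\cdot y))$, which agree precisely because $a\cdot(b\cdot y)=(ab)\cdot y$. Next, $\widetilde R\circ T=\Pi\circ S$, since for $x\in X$ one has $\widetilde R(Tx)(a)=S(\sigma(a\cdot Tx))=S(\sigma(T(a\cdot x)))=S(a\cdot x)=(\Pi\circ S)(x)(a)$, using that $T$ and $S$ are morphisms and $\sigma\circ T=\id_X$. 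Finally $R:=\rho\circ\widetilde R\in\AB(Y,E)$ satisfies $R\circ T=\rho\circ\Pi\circ S=S$, as wanted. (In effect this shows directly that $\operators(A,E)$ serves as a cofree injective module, so one avoids having to treat the injectivity of $\operators(A,E)$ in the non-unital case separately.)

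For the converse, suppose that $E$ is injective. The hypothesis $\set{x\in E:A\cdot x=\set{0}}=\set{0}$ is, as noted before the statement, exactly what makes $\Pi$ a monomorphism. I would then apply the defining property of injectivity of $E$ to the monomorphism $\Pi\in\AB(E,\operators(A,E))$ and to the identity morphism $\id_E\in\AB(E,E)$, obtaining $R\in\AB(\operators(A,E),E)$ with $R\circ\Pi=\id_E$; this $R$ is a module left inverse of $\Pi$, so $\Pi$ is a coretraction. The main obstacle is that Definition \ref{1.1b} can be invoked only for \emph{admissible} monomorphisms, so here one must know that $\Pi$ has a bounded linear (not necessarily module) left inverse. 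This is where more than bare injectivity of $\Pi$ is used: in the setting relevant to this paper, where $A$ has a bounded approximate identity $(e_\alpha)$, the module $E$ is essential, and $E$ is a dual space, such a left inverse is obtained by sending $T\in\operators(A,E)$ to the weak-$*$ limit along a fixed ultrafilter of the bounded net $(T(e_\alpha))_\alpha$; on the range of $\Pi$ this returns $\lim_\alpha e_\alpha\cdot x=x$. I would isolate this admissibility of $\Pi$ as the single step requiring care, the remainder of both implications being the formal computations above.
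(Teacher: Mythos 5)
Your first implication (coretraction $\Rightarrow$ injective) is correct: the map $\widetilde R(y)(a)=S(\sigma(a\cdot y))$ does the job, and the verifications you give are exactly the right ones. The paper itself does not prove the proposition (it cites \cite[Proposition 1.7]{DP}), so the comparison below is with the standard argument that the citation refers to.

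The converse as you present it has a genuine gap. To invoke Definition \ref{1.1b} for the pair $(\Pi,\id_E)$ you must know that $\Pi\in\AB(E,\operators(A,E))$ is an \emph{admissible} monomorphism, and the justification you offer uses three hypotheses --- $A$ has a bounded approximate identity, $E$ is essential, and $E$ is a dual space --- none of which appears in the statement of the proposition. The only hypothesis available is $\set{x\in E: A\cdot x=\set{0}}=\set{0}$, and that gives injectivity of $\Pi$ but not a bounded linear left inverse (nor even that $\Pi$ is bounded below). The standard way around this is to route through the unitization $A_+=A\oplus\complexs e$: the morphism $\Pi_+:E\to\operators(A_+,E)$, $\Pi_+(x)(a)=a\cdot x$ with $e\cdot x=x$, is \emph{always} admissible, since evaluation at $e$ is a bounded linear left inverse. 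Injectivity of $E$ then yields $\rho_+\in\AB(\operators(A_+,E),E)$ with $\rho_+\circ\Pi_+=\id_E$. Now identify $\operators(A_+,E)$ with $\operators(A,E)\oplus E$ via $T\mapsto(T|_A,\,T(e))$; the module action becomes $a\cdot(T_0,\xi)=(a\cdot T_0,\,T_0(a))$, so the summand $\set{(0,\xi):\xi\in E}$ is a submodule on which $A$ acts trivially. Since $\rho_+$ is a module morphism, $A\cdot\rho_+(0,\xi)=\rho_+(A\cdot(0,\xi))=\set{0}$, and the faithfulness hypothesis forces $\rho_+(0,\xi)=0$. Hence $\rho_+$ factors through the restriction map $\operators(A_+,E)\to\operators(A,E)$ as $\rho_+=\rho\circ r$, and $\rho\circ\Pi=\rho\circ r\circ\Pi_+=\rho_+\circ\Pi_+=\id_E$, so $\Pi$ is a coretraction. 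Note that faithfulness is used here for more than making $\Pi$ injective --- it is what allows the coretraction of $\Pi_+$ to descend to $\operators(A,E)$. Your argument does establish the converse for the modules actually used in this paper ($\Lspace^p(G)$ with $p>1$ over $\Lone(G)$), but not the proposition as stated.
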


We shall take the following as our definition of a flat module; a different, more intrinsic, definition is given in \cite[VI.1.1]{Helemskii86} and \cite[Definition 5.3.3]{Runde},
 and the equivalence of the two definitions is shown in \cite[VII.1.14]{Helemskii86} and \cite[Theorem 5.3.8]{Runde}.

\begin{definition} \label{definition of flat}
Let $A$ be a Banach algebra, and let $E\in \Amod$.  Then $E$ is {\it flat} if the dual module $\dual{E}$ is injective in $\modA$.
\end{definition}

We note that every projective module in $\Amod$ is flat \cite[Examples 5.3.9(b)]{Runde}. We shall use the following basic result of Helemskii \cite{He1}: 
see  \cite[VII.2.29]{Helemskii86} and \cite[Theorem 5.3.8 and Example 5.3.9(a)]{Runde}. The notion of an amenable Banach algebra originates with Johnson 
\cite{Johnson72}; see \cite[\S 2.8]{HGD}, \cite{Helemskii86}, and \cite{Runde}.

\begin{theorem}
Let $A$ be an amenable Banach algebra. Then every dual module in $\Amod$ or $\modA$ is injective; equivalently, every module in $\Amod$ or $\modA$ is flat. \enproof 
\end{theorem}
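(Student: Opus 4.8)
The two assertions in the theorem are equivalent directly from the definition of flatness (Definition \ref{definition of flat}): a module $E\in\Amod$ is flat exactly when $\dual{E}$ is injective in $\modA$, and a module $E\in\modA$ is flat exactly when $\dual{E}$ is injective in $\Amod$. Since every dual module in $\modA$ arises as $\dual{E}$ for some $E\in\Amod$, and symmetrically on the other side, the assertion ``every module in $\Amod$ or $\modA$ is flat'' is a verbatim restatement of ``every dual module in $\modA$ or $\Amod$ is injective''. It therefore suffices to prove that, when $A$ is amenable, every dual left module $J=\dual{X}$, with $X\in\modA$, is injective in $\Amod$; the right-module case is entirely symmetric.

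The plan is to use Johnson's characterization of amenability: $A$ is amenable if and only if it admits a bounded approximate diagonal, that is, a bounded net $(m_\nu)$ in $A\projectivetensor A$ with $a\cdot m_\nu-m_\nu\cdot a\to 0$ and $\pi(m_\nu)\,a\to a$ for each $a\in A$, where $\pi\colon A\projectivetensor A\to A$ is the product map; in particular $(\pi(m_\nu))$ is a bounded approximate identity for $A$. Using this approximate identity I would first reduce to the case in which the modules involved are essential, so that, for any morphism $S$ into $J$, its image lies in the closed linear span $\overline{A\cdot J}$ of $A\cdot J$, on which $\pi(m_\nu)$ acts as an approximate identity in norm. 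I would then verify injectivity straight from Definition \ref{1.1b}.

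So let $E,F\in\Amod$, let $T\in\AB(E,F)$ be an admissible monomorphism with bounded linear left inverse $S_0\in\operators(F,E)$, and let $S\in\AB(E,J)$; I must produce $R\in\AB(F,J)$ with $R\circ T=S$. First form the bounded linear map $\Phi=S\circ S_0\colon F\to J$, which satisfies $\Phi\circ T=S$ but is only linear, not a morphism. Its failure to be a morphism is recorded by the maps $b\mapsto\Phi(a\cdot b)-a\cdot\Phi(b)$ $(a\in A)$; a short computation using $S_0\circ T=\id_E$ and the fact that $T$ and $S$ are morphisms shows that each of these annihilates $T(E)$. Writing $m_\nu$ formally as $\sum_i a_{\nu,i}\otimes c_{\nu,i}$, I would average $\Phi$ against the diagonal by setting $R_\nu(b)=\sum_i a_{\nu,i}\cdot\Phi(c_{\nu,i}\cdot b)$ and passing to a weak$^*$-accumulation point $R\in\operators(F,J)$ of the bounded net $(R_\nu)$; here it is crucial that $J=\dual{X}$ is a \emph{dual} space, so that the left action $a\cdot(\,\cdot\,)$ is weak$^*$-continuous and, via $\operators(F,J)=\dual{(F\projectivetensor X)}$, bounded nets have weak$^*$-convergent subnets. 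The identity $R_\nu(a\cdot b)-a\cdot R_\nu(b)=\Theta_b(m_\nu\cdot a-a\cdot m_\nu)$, for the bounded map $\Theta_b\colon u\otimes v\mapsto u\cdot\Phi(v\cdot b)$ on $A\projectivetensor A$, together with $m_\nu\cdot a-a\cdot m_\nu\to 0$, forces the limit $R$ to be an $A$-module morphism; and $R_\nu(T e)=\pi(m_\nu)\cdot S(e)\to S(e)$, using $S(e)\in\overline{A\cdot J}$ and $\pi(m_\nu)\,a\to a$, forces $R\circ T=S$ in the limit. This exhibits the required $R$, so $J$ is injective.

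The main obstacle is exactly this averaging step: one must set up the maps $R_\nu$ and the weak$^*$ limit so that the two defining properties of the approximate diagonal translate cleanly into ``$R$ is a morphism'' and ``$R$ extends $S$''. Tracking the weak$^*$-continuity of the module action on $J$, and arranging the reduction to essential modules so that $S(E)\subseteq\overline{A\cdot J}$, are the delicate points; the invariance $a\cdot m_\nu-m_\nu\cdot a\to 0$ then handles the first property and the approximate-identity behaviour of $\pi(m_\nu)$ handles the second.
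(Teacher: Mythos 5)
The paper gives no proof of this theorem: it is quoted from Helemski\u{\i} \cite{Helemskii86} and Runde \cite{Runde}, so your argument can only be measured against the standard one. Your opening reduction via Definition \ref{definition of flat} is correct, and the core of the averaging construction is sound: $R_\nu=\sum_i a_{\nu,i}\cdot\Phi\cdot c_{\nu,i}$ is the image of $m_\nu$ under a bounded linear map from $A\projectivetensor A$ into $\operators(F,J)=\dual{(F\projectivetensor X)}$, the maps $R\mapsto a\cdot R$ and $R\mapsto R\cdot a$ on this dual bimodule are weak$^*$-continuous, and $a\cdot R_\nu-R_\nu\cdot a$ is the image of $a\cdot m_\nu-m_\nu\cdot a$ under that same bounded map, hence tends to $0$ in norm; so any weak$^*$-accumulation point $R$ is indeed a morphism.

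The gap is the final step. You correctly get $R_\nu(Te)=\pi(m_\nu)\cdot S(e)$, but for this to converge, even weak$^*$, to $S(e)$ you need $\langle x\cdot\pi(m_\nu),S(e)\rangle\to\langle x,S(e)\rangle$ for every $x\in X$, which amounts to $S(e)$ lying in the essential part of $J$. Neither this nor your supporting claim that ``the image of any morphism $S$ into $J$ lies in $\overline{A\cdot J}$'' is automatic: the latter holds only when $E$ is essential, and the definition of injectivity quantifies over arbitrary $E,F\in\Amod$, while $J=\dual{X}$ is essential only when $X$ is. There is no reduction of the test modules to the essential case, so as written the construction can fail to satisfy $R\circ T=S$ (already for $T=\id$, $S=\id$ on a non-essential dual module it returns the wrong operator). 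Two standard repairs: (i) pass to the unitization $A^{\#}$, which is amenable, note that every Banach $A$-module extends canonically to a unital $A^{\#}$-module with ${}_{A^{\#}}\operators(E,F)=\AB(E,F)$, and run your argument with an approximate diagonal $\tilde m_\nu$ for $A^{\#}$; then $\pi(\tilde m_\nu)\to 1$ in norm, so $\pi(\tilde m_\nu)\cdot S(e)\to S(e)$ with no essentiality needed. Or (ii) avoid the limit entirely: your own computation shows that $D(a)=a\cdot\Phi-\Phi\cdot a$ is a bounded derivation into the weak$^*$-closed sub-bimodule $N=\{R\in\operators(F,J):R|_{T(E)}=0\}$ of $\dual{(F\projectivetensor X)}$, which is itself a dual bimodule; amenability makes $D$ inner, say $D(a)=a\cdot\Psi-\Psi\cdot a$ with $\Psi\in N$, and then $R=\Phi-\Psi$ is a morphism with $R\circ T=S$ exactly. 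Route (ii) is essentially the argument behind the cited references.
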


\subsection{$\Lspace^p$ modules over group algebras} 

\label{modules over group algebras}

\noindent Let $G$ be a locally compact group with left Haar measure $m$ and modular function $\Delta$, and set $\Lone(G)=\Lone(G,m)$; see \cite[\S 3.3]{HGD}. For $f \in \Lone(G)$ 
and $s \in G$, we define $s\cdot f \in \Lone(G)$ by 
\[
	(s\cdot f)(t)=f(s^{-1}t)\quad(t \in G)\,,
\]
so defining an action of $G$ on the space $\Lone(G)$. We can extend this action by duality to the space $\dual{\Linfty(G)}=\bidual{\Lone(G)}$. An element $\Lambda \in \dual{\Linfty(G)}$ is a {\it mean} on $\Linfty(G)$ if 
\[
	\duality{1}{\Lambda}=\norm{\Lambda}=1\,,
\]
and $\Lambda$ is {\it left-invariant} if $\set{s\cdot \Lambda:s \in G}=\set{\Lambda}$. The group $G$ is {\it amenable} if there exists a left-invariant mean on $\Linfty(G)$. There are many different characterizations of the amenability of $G$; see \cite{Paterson}, for example, for a full account.

Let $G$ be a locally compact group. We now consider $\Lone(G)$ as a Banach algebra equipped with the {\it convolution product} $\star$ given by
\begin{equation}\label{convolution}
(f \star g)(s)=\int_{G} f(t)g(t^{-1}s) \dd m(t)\quad (s \in G)\,,
\end{equation}
where $f, g \in \Lone(G)$ and the integral is defined for almost all $s \in G$. It is standard that $(\Lone(G), \,\star\,)$  has a bounded approximate identity.
It is a very famous theorem of Johnson \cite{Johnson72} that the algebra $\Lone(G)$ is amenable as a Banach algebra if and only if the locally compact group $G$ is amenable; see also \cite[Theorem 5.6.42]{HGD}.

We denote by $\varphi_G$ the {\it augmentation character} on $G$, given by
\[
\varphi_G(f)=\int_G f(t) \dd m(t)\quad (f \in \Lone(G))\,.
\]

Let $p\in[1,\infty)$, and set $E= \Lspace^p(G)=\Lspace^{\, p}(G, m)$. Take $f \in \Lone(G)$ and $g \in \Lspace^p(G)$. Then again we can define $f \star g$ on $G$ via \eqref{convolution}, and in this case we have $f\star g \in \Lspace^p(G)$. With this multiplication, $\Lspace^p(G)$ has the structure of a Banach left $\Lone(G)$-module;  indeed, we have  $\Lspace^p(G)\in\Lmod$ \cite[Theorem 3.3.19]{HGD}.  The module $E$ is essential, and so Proposition \ref{2.2} applies.

In fact, the spaces $\Lspace^p(G)$ are Banach $\Lone(G)$-bimodules, where the right module action of $\Lone(G)$ on $\Lspace^p(G)$ is defined as 
\[
	(g\star f)(s)=\int g(st^{-1})f(t)\Delta^{1/p}(t^{-1}) \dd m(t)\,\quad(s\in G)
\]
for $f \in \Lone(G)$ and $g \in \Lspace^{p}(G)$. This formula in the case where $p=1$ gives the same right action as the  convolution product on $\Lone(G)$.

We shall use the notation $\,\cdot\,$ for the module products on $\Lspace^{p'}(G)$ considered as the dual module of ${(\Lspace^p(G),\star)}$; the formulae for these products are given in \cite[\S 3.3]{HGD}. These dual module actions are similar to, but different from, the standard actions $\star$ defined above.

\subsection{Multi-normed spaces}

\noindent The following definition is due to Dales and Polyakov. For a full account of the theory of multi-normed spaces, see \cite{DP08}.

\begin{definition}
Let $(E, \norm{\cdot})$ be a normed space, and let $(\norm{\cdot}_n: n \in \naturals)$ be a sequence such that $\norm{\cdot}_n$ is a norm on $E^n$ for each $n \in \naturals$, with $\norm{\cdot}_1=\norm{\cdot}$ on $E$. Then the sequence $(\norm{\cdot}_n: n \in \naturals)$ is a \emph{multi-norm} if the following axioms hold (where in each case the axiom is required to hold for all $n\geq 2$ and all $x_1,\ldots, x_n \in E$):\medskip

\noindent(A1) $\norm{(x_{\sigma(1)},\ldots,x_{\sigma(n)})}_{n}=\norm{(x_{1},\ldots,x_{n})}_{n}$ for each permutation $\sigma$ of $\naturals_n$;\medskip

\noindent(A2) $\norm{(\alpha_1x_{1},\ldots,\alpha_nx_{n})}_{n}\leq \max_{i \in \naturals_n}\abs{\alpha_i}\norm{(x_{1},\ldots,x_{n})}_{n} \quad (\alpha_1,\ldots,\alpha_n \in \complexs)$\,;\medskip

\noindent(A3) $\norm{(x_1,\ldots,x_{n-1},0)}_{n}=\norm{(x_1,\ldots,x_{n-1})}_{n-1}$\,;\medskip

\noindent(A4) $\norm{(x_1,\ldots,x_{n-2},x_{n-1},x_{n-1})}_{n}=\norm{(x_1,\ldots,x_{n-2},x_{n-1})}_{n-1}$\,.\medskip

\noindent The normed space $E$ equipped with a multi-norm is a \emph{multi-normed space}, denoted  in full by $((E^n,\norm{\cdot}_n):\ n\in\naturals)$. We say that such a multi-norm is \emph{based on} $E$.
\end{definition}

Suppose that in the above definition we replace axiom (A4) by the following axiom:\medskip

\noindent(B4) $\norm{(x_1,\ldots,x_{n-2},x_{n-1},x_{n-1})}_{n}=\norm{(x_1,\ldots,x_{n-2},2x_{n-1})}_{n-1}$.\medskip

\noindent Then we obtain the definition of a {\it dual multi-norm} and of a {\it dual multi-normed space}. (A yet more general concept,
 that of sequences $(\norm{\cdot}_n:\ n\in\naturals)$ satisfying just (A1)--(A3), is mentioned in \cite[$\S2.2.1$]{DP08}.)

Let $((E^n,\norm{\cdot}_n):\ n\in\naturals)$ be a multi-normed or dual multi-normed space.  For each $n\in\naturals$,  the dual of the space 
$(E^n,\norm{\cdot}_n)$ can be isomorphically identified with the Banach space $(E')^n$, as explained in \cite[$\S 1.2.4$]{DP08}, 
and in this way we regard $(E')^n$ as a Banach space. The weak$^*$ topology from this duality is the product of the weak$^*$ topologies 
given by the duality of $E$ and $E'$. 

The following results are noted in \cite[Chapter 2]{DP08}. First, the axioms (A1)--(A4) are independent \cite[$\S 2.1.3$]{DP08}. 
 Second, in the case where $(\norm{\cdot}_n: n \in \naturals)$ satisfies (A1)--(A3), we have
\[
	\max_{i\in\naturals_n}\norm{x_i}\le \norm{(x_1,\ldots,x_n)}_n\le\sum_{i=1}^n\norm{x_i}\quad(x_1,\ldots,x_n\in E)
\]
for each $n\in\naturals$, and so $\norm{\cdot}_n$ defines the same topology on $E^n$ as the product topology \cite[Lemma 2.11]{DP08}. 
Third, if $(\norm{\cdot}_n: n \in \naturals)$ is a multi-norm or a dual multi-norm based on $E$, and $\dual{\norm{\cdot}_n}$ is the dual 
norm to $\norm{\cdot}_n$ for each $n\in\naturals$, then $(\dual{\norm{\cdot}}_n: n \in \naturals)$ is a dual multi-norm or multi-norm, respectively, 
based on $\dual{E}$ \cite[$\S 2.3.2$]{DP08}.  This latter result implies that the sequence of second duals of a multi-norm
 $(\norm{\cdot}_n: n \in \naturals)$ is a multi-norm based on $\bidual{E}$.

The family $\E_E$ of all multi-norms based on a normed space $E$ is a Dedekind complete lattice  with respect to the  ordering $\le$, 
where $(\norm{\cdot}^1_n: n \in \naturals)\le (\norm{\cdot}^2_n: n \in \naturals)$ if 
\[
	\norm{\tuple{x}}^1_n\le \norm{\tuple{x}}^2_n\quad(\tuple{x}\in E^n,\ n\in\naturals)
\] 
\cite[Proposition 3.10]{DP08}. The minimum element of the lattice  $(\E_E,\le)$ 
is the \emph{minimum multi-norm} $(\norm{\cdot}^{\min}_n: n \in \naturals)$, and the formula for $\norm{\cdot}^{\min}_n$ is 
\[
	\norm{(x_1,\ldots,x_n)}^{\min}_n=\max_{i\in\naturals_n}\norm{x_i}\quad(x_1,\ldots, x_n\in E)
\]
for each $n\in\naturals$, as in \cite[Definition 3.2]{DP08}.

For each normed space $E$, there is a unique maximum element in the lattice $(\E_E,\le)$; this is the \emph{maximum multi-norm} $(\norm{\cdot}_n^{\max}: n \in \naturals)$. By \cite[Theorem 3.33]{DP08}, for each $\tuple{x}=(x_1,\ldots,x_n) \in E^n$ and each $n \in \naturals$, we have
\begin{equation}\label{max-mn}
\norm{\tuple{x}}^{\max}_n=\sup \set{ \abs{\sum_{i=1}^n\duality{ x_i}{\lambda_i}}: \lambda_1,\ldots, \lambda_n \in \dual{E},\, \mu_{1,n}(\lambda_1,\ldots, \lambda_n)\leq 1}\,.
\end{equation}
Further, $\mu_{1,n}$ on $(E')^n$ is the dual norm to the norm $\norm{\cdot}^{\max}_n$ on $E^n$.

A multi-norm $(\norm{\cdot}^2_n: n \in \naturals)$ in $\E_E$ \emph{dominates} a multi-norm $(\norm{\cdot}^1_n: n \in \naturals)$ in $\E_E$ if there is a constant $C>0$ such that $\norm{\tuple{x}}^1_n\le C\norm{\tuple{x}}^2_n$\,\; ($\tuple{x}\in E^n,\ n\in\naturals$); two multi-norms are \emph{equivalent} if each dominates the other. 

The following is {\cite[Definition 6.4]{DP08}} (where  $c_B$ is used instead of our $\multibound(B)$ to denote the multi-bound of a set $B$).

\begin{definition}
Let $((E^n,\norm{\cdot}_n):\ n\in\naturals)$ be a multi-normed space. A subset $B\subset E$ is {\it multi-bounded} if
\[
\multibound(B):=\sup \set{ \norm{(x_1,\ldots, x_n)}_n: x_1,\ldots, x_n \in B,\, n \in \naturals}<\infty\,.
\]
The constant $\multibound(B)$ is the \emph{multi-bound} of $B$.
\end{definition}

The following easy remark is \cite[Proposition 6.5(ii)]{DP08}.

\begin{lemma}\label{absolutely convex hull and multi-bounded}
Let $E$ be a multi-normed space. Then the absolutely convex hull of a multi-bounded set is multi-bounded, with the same multi-bound.\enproof
\end{lemma}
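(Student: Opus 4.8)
The plan is to prove the equality of multi-bounds by establishing the two inequalities separately. Writing $\Gamma(B)$ for the absolutely convex hull of $B$, the inequality $\multibound(B)\le\multibound(\Gamma(B))$ is immediate from $B\subset\Gamma(B)$, so the entire content lies in the reverse inequality $\multibound(\Gamma(B))\le\multibound(B)$. I would factor $\Gamma(B)$ through the balanced hull, writing $\Gamma(B)=\mathrm{conv}(\mathrm{bal}(B))$, where $\mathrm{bal}(B)=\set{\zeta b:\abs{\zeta}\le 1,\ b\in B}$, and handle the two operations in turn.

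First I would dispose of the balanced hull using axiom (A2). Given $b_1,\ldots,b_n\in B$ and scalars $\zeta_1,\ldots,\zeta_n\in\complexs$ with $\abs{\zeta_i}\le 1$, axiom (A2) gives
\[
\norm{(\zeta_1 b_1,\ldots,\zeta_n b_n)}_n\le\max_{i\in\naturals_n}\abs{\zeta_i}\,\norm{(b_1,\ldots,b_n)}_n\le\multibound(B)\,,
\]
so that $\multibound(\mathrm{bal}(B))=\multibound(B)$. It therefore suffices to prove that passing to the convex hull does not increase the multi-bound, i.e. that $\multibound(\mathrm{conv}(C))\le\multibound(C)$ for an arbitrary subset $C\subset E$.

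The heart of the argument is an averaging identity. Take $y_1,\ldots,y_n\in\mathrm{conv}(C)$, and treat first the case in which each $y_i$ is a \emph{rational} convex combination with a common denominator $N$, so that $y_i=\tfrac1N\sum_{l=1}^N d_{i,l}$ for certain $d_{i,l}\in C$ (allowing repetitions). For each choice function $f\colon\naturals_n\to\naturals_N$, the tuple $(d_{1,f(1)},\ldots,d_{n,f(n)})$ lies in $C^n$, and a direct count shows that
\[
(y_1,\ldots,y_n)=\frac{1}{N^n}\sum_{f\colon\naturals_n\to\naturals_N}(d_{1,f(1)},\ldots,d_{n,f(n)})\,,
\]
since, for each fixed $i$, every index value is selected by exactly $N^{n-1}$ of the $N^n$ functions $f$. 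Applying the triangle inequality for the norm $\norm{\cdot}_n$ to this genuine average then yields $\norm{(y_1,\ldots,y_n)}_n\le\multibound(C)$, because each summand is a tuple from $C$.

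Finally I would pass from rational to arbitrary convex combinations by approximation. Since $\norm{\cdot}_n$ induces the product topology on $E^n$, by the two-sided estimate $\max_i\norm{x_i}\le\norm{\tuple{x}}_n\le\sum_i\norm{x_i}$ recalled above, the map $\tuple{x}\mapsto\norm{\tuple{x}}_n$ is continuous; and every representation $y_i=\sum_j t_{ij}c_{ij}$ can be approximated in the norm of $E$ by rational convex combinations of the same points $c_{ij}$. Passing to the limit preserves the bound $\norm{\cdot}_n\le\multibound(C)$, which gives $\multibound(\mathrm{conv}(C))\le\multibound(C)$ and hence the lemma. The one delicate point is the averaging identity: because integer multiplicities force a common denominator, the clean multi-index count is available only for rational combinations, which is precisely why the subsequent continuity argument is required.
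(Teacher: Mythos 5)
Your argument is correct. Note that the paper itself gives no proof of this lemma --- it is quoted directly from \cite[Proposition 6.5(ii)]{DP08} and marked as an easy remark --- so there is no internal proof to compare against; your write-up supplies the missing details. Each step checks out: the reduction $\Gamma(B)=\mathrm{conv}(\mathrm{bal}(B))$ is valid (with $0\in\mathrm{bal}(B)$ when $B\ne\emptyset$, the degenerate case being trivial), axiom (A2) handles the balanced hull, the averaging identity over choice functions $f\colon\naturals_n\to\naturals_N$ is a correct count, and the passage from rational to general convex combinations is justified by the continuity of $\norm{\cdot}_n$ for the product topology. That said, the rational-approximation detour is avoidable: since $\norm{\cdot}_n$ is a norm on $E^n$, the map $y\mapsto\norm{(y,x_2,\ldots,x_n)}_n$ is convex in $y$, so for $y=\sum_j t_jc_j$ a convex combination of points of $C$ one has $\norm{(y,x_2,\ldots,x_n)}_n\le\max_j\norm{(c_j,x_2,\ldots,x_n)}_n$ directly; replacing the coordinates one at a time by elements of $C$ in this way gives $\multibound(\mathrm{conv}(C))\le\multibound(C)$ in $n$ steps, with no approximation and no common denominator. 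Your multi-index average buys a single symmetric identity treating all coordinates at once, at the cost of the limiting argument; the coordinate-wise convexity argument is shorter and works verbatim for arbitrary (not merely rational) combinations.
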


\begin{definition}
Let $((E^n,\norm{\cdot}_n):\ n\in\naturals)$ and $((F^n,\norm{\cdot}_n):\ n\in\naturals)$ be multi-normed spaces, and let $T \in \mathcal{B}(E, F)$. Then $T$ is \emph{multi-bounded} if 
\[
\norm{T}_{mb}:=\sup_{k \in \naturals} \norm{T^{(k)}}< \infty\,.\index{$T_{mb}$}
\]
We set 
\[
	\mboperators(E, F)=\set{T \in \operators(E, F): \norm{T}_{mb}<\infty}\,,
\]
so that $\mboperators(E, F)$ is the space of {\it multi-bounded operators}.\index{$M(E, F)$}
\end{definition}

Here, $\norm{T^{(k)}}$ is calculated by regarding $T^{(k)}$ as a bounded linear map from $(E^k,\norm{\cdot}_k)$ into $(F^k,\norm{\cdot}_k)$. It is easy to check that $(\mboperators(E, F),\norm{\cdot}_{mb})$ is a normed space, and that it is a Banach space in the case where $F$ is a Banach space; $\norm{\cdot}_{mb}$ is the \emph{multi-bounded norm} on $\mboperators(E,F)$.

Let $E$ and $F$ be multi-normed spaces, and let $T \in \mboperators(E, F)$. It follows immediately from the definitions that $T(B)$ is a multi-bounded set in $F$ whenever $B$ is a multi-bounded set in $E$. Conversely, it is noted in \cite[$\S 6.1.3$]{DP08} that any $T \in \operators(E, F)$ which takes multi-bounded sets to multi-bounded sets is multi-bounded, and, further, that 
\[
\norm{T}_{mb}=\sup\set{ \multibound[T(B)]: B\subset E,\ \multibound(B)\leq 1}\,.
\]
Thus our definitions of $\mboperators(E,F)$ and $\norm{\cdot}_{mb}$ are equivalent to those given in \cite[Definitions 6.9 and 6.12]{DP08}; the definitions in
\cite{DP08}  apply more generally. 

\begin{proposition}\label{multi-bounded operator from lone}
Let $E$ be a multi-normed space, and consider $\lone$ with its minimum multi-norm. Then, for each $T\in\operators(\lone,E)$, we have
\[
	\norm{T}_{mb}=\multibound\set{T(\delta_k):\ k\in\naturals}=\multibound\,T(\lone_{[1]})\,,
\]
so that $\mboperators(\lone,E)=\set{T\in\operators(\lone,E):\ \multibound\set{T(\delta_k):\ k\in\naturals}<\infty}$.
\end{proposition}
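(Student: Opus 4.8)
The plan is to establish the two equalities $\norm{T}_{mb}=\multibound\,T(\lone_{[1]})$ and $\multibound\set{T(\delta_k):k\in\naturals}=\multibound\,T(\lone_{[1]})$ separately, after first recording how the minimum multi-norm interacts with multi-bounds. Since the minimum multi-norm on $\lone$ satisfies $\norm{(x_1,\ldots,x_n)}^{\min}_n=\max_{i}\norm{x_i}$, a set $B\subset\lone$ has $\multibound(B)=\sup\set{\norm{x}:x\in B}$ (the supremum being attained already at $n=1$); in particular $\multibound(B)\le 1$ if and only if $B\subseteq\lone_{[1]}$, and $\multibound(\lone_{[1]})=1$.

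For the first equality I would invoke the formula $\norm{T}_{mb}=\sup\set{\multibound[T(B)]:B\subset\lone,\ \multibound(B)\le 1}$ recorded above (from \cite[$\S 6.1.3$]{DP08}). By the previous paragraph the admissible sets $B$ are exactly the subsets of $\lone_{[1]}$, and for any such $B$ we have $T(B)\subseteq T(\lone_{[1]})$, whence $\multibound[T(B)]\le\multibound\,T(\lone_{[1]})$ by monotonicity of the multi-bound. The supremum is therefore attained at $B=\lone_{[1]}$ (which itself satisfies $\multibound(\lone_{[1]})=1$), giving $\norm{T}_{mb}=\multibound\,T(\lone_{[1]})$.

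The second equality is the substantive point. The inclusion $\set{T(\delta_k):k\in\naturals}\subseteq T(\lone_{[1]})$ gives $\multibound\set{T(\delta_k):k\in\naturals}\le\multibound\,T(\lone_{[1]})$ immediately. For the reverse inequality I would show that $T(\lone_{[1]})$ is contained in the closed absolutely convex hull of $\set{T(\delta_k):k\in\naturals}$: if $f\in\lone_{[1]}$ then $f=\sum_k f(k)\delta_k$ with $\sum_k\abs{f(k)}\le 1$, so by continuity of $T$ the vector $Tf=\sum_k f(k)T(\delta_k)$ is the norm-limit of the partial sums $\sum_{k=1}^N f(k)T(\delta_k)$, each of which is an absolutely convex combination of the $T(\delta_k)$. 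It then remains to check that passing to the absolutely convex hull and then to its closure leaves the multi-bound unchanged: the first is exactly Lemma \ref{absolutely convex hull and multi-bounded}, and the second follows because each $\norm{\cdot}_n$ is continuous for the product topology on $E^n$ (\cite[Lemma 2.11]{DP08}), so the defining supremum for the multi-bound is computed on a set and on its closure alike. Combining these gives $\multibound\,T(\lone_{[1]})\le\multibound\set{T(\delta_k):k\in\naturals}$, and the two are equal.

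The only genuine obstacle is this last hull-and-closure argument; everything else is bookkeeping with the minimum multi-norm and the recorded characterization of $\norm{\cdot}_{mb}$. The final displayed description of $\mboperators(\lone,E)$ is then immediate, since the equalities just proved show that $\norm{T}_{mb}<\infty$ holds exactly when $\multibound\set{T(\delta_k):k\in\naturals}<\infty$.
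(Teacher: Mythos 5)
Your proposal is correct and follows essentially the same route as the paper, whose entire proof is the one-line observation that the result "follows directly from Lemma \ref{absolutely convex hull and multi-bounded} and the previous paragraph" (i.e.\ the formula $\norm{T}_{mb}=\sup\set{\multibound[T(B)]: B\subset \lone,\ \multibound(B)\le 1}$). You have simply filled in the details the paper leaves implicit --- that multi-bounded subsets of $(\lone,\norm{\cdot}^{\min}_n)$ of multi-bound at most $1$ are exactly the subsets of $\lone_{[1]}$, and that passing to the closed absolutely convex hull of $\set{T(\delta_k)}$ preserves the multi-bound --- and both of these are handled correctly.
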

\begin{proof}
This follows directly from Lemma \ref{absolutely convex hull and multi-bounded} and the previous paragraph. 
\end{proof}

The following result is immediate; a more general result is given in \cite[Theorem 6.17]{DP08}.

\begin{proposition}\label{operators and minimum mn}
Let $((E^n,\norm{\cdot}_n):\ n\in\naturals)$ and $((F^n,\norm{\cdot}^{\min}_n):\ n\in\naturals)$ be multi-normed spaces. Then each $T\in\operators(E,F)$ is multi-bounded and $\norm{T}_{mb}=\norm{T}$. \enproof
\end{proposition}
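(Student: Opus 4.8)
The plan is to prove that, when the target space $F$ carries its \emph{minimum} multi-norm, every bounded operator $T\in\operators(E,F)$ is automatically multi-bounded with $\norm{T}_{mb}=\norm{T}$. The essential point is that the minimum multi-norm on $F^k$ is given by the maximum of the coordinate norms, so amplifying $T$ cannot inflate the norm beyond $\norm{T}$.

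First I would compute $\norm{T^{(k)}}$ directly for a fixed $k\in\naturals$. Take $\tuple{x}=(x_1,\ldots,x_k)\in E^k$. Using the explicit formula $\norm{(y_1,\ldots,y_k)}^{\min}_k=\max_{i\in\naturals_k}\norm{y_i}$ for the minimum multi-norm on $F^k$, recorded earlier in the excerpt, we have
\[
	\norm{T^{(k)}\tuple{x}}^{\min}_k=\norm{(Tx_1,\ldots,Tx_k)}^{\min}_k=\max_{i\in\naturals_k}\norm{Tx_i}\le\norm{T}\,\max_{i\in\naturals_k}\norm{x_i}\le\norm{T}\,\norm{\tuple{x}}_k\,,
\]
where the last inequality uses the lower bound $\max_{i\in\naturals_k}\norm{x_i}\le\norm{\tuple{x}}_k$ that holds for \emph{any} sequence satisfying (A1)--(A3), as quoted from \cite[Lemma 2.11]{DP08}. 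This shows $\norm{T^{(k)}}\le\norm{T}$ for every $k$, and hence $\norm{T}_{mb}=\sup_k\norm{T^{(k)}}\le\norm{T}$. In particular $T$ is multi-bounded.

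For the reverse inequality I would simply observe that $\norm{T}=\norm{T^{(1)}}\le\sup_k\norm{T^{(k)}}=\norm{T}_{mb}$, since $\norm{\cdot}_1=\norm{\cdot}$ by the definition of a multi-norm. Combining the two bounds gives $\norm{T}_{mb}=\norm{T}$, which is exactly the claim; multi-boundedness of $T$ follows at once from finiteness of $\norm{T}$.

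There is essentially no obstacle here: the result is immediate from the formula for the minimum multi-norm together with the universal lower bound on any multi-norm, which is why the paper labels it ``immediate.'' The only point requiring the slightest care is making sure the lower estimate $\max_i\norm{x_i}\le\norm{\tuple{x}}_k$ is applied on the source side (where $E$ carries an arbitrary multi-norm) while the exact equality $\norm{\cdot}^{\min}_k=\max_i\norm{\cdot}$ is used on the target side; both facts are already established in the excerpt, so the argument is a two-line chain of inequalities rather than anything substantive.
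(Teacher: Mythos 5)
Your proof is correct and is precisely the ``immediate'' computation the paper has in mind (the paper gives no written proof, marking the result as immediate): the minimum multi-norm on $F^k$ is the max of coordinate norms, and the universal lower bound $\max_i\norm{x_i}\le\norm{\tuple{x}}_k$ on the domain side gives $\norm{T^{(k)}}\le\norm{T}$, with the reverse inequality coming from $k=1$. Nothing further is needed.
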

 
 For normed spaces $E$ and $F$, the normed space of nuclear operators from $E$ to $F$ is denoted by $(\nuclearoperators(E,F),\nu)$, where $\nu$ is the nuclear norm. It is shown in \cite[Theorem 6.15(ii)]{DP08} that there is a natural contractive inclusion 
 \[
 	(\nuclearoperators(E,F),\nu)\hookrightarrow(\mboperators(E,F),\norm{\cdot}_{mb})\,.
\] 
In particular, $\finiterankoperators(E,F)\subset \mboperators(E,F)$. It is shown in \cite[Example 6.25]{DP08} that the `minimum case', where $\mboperators(E, F) =\nuclearoperators(E, F)$, can occur.

\begin{example}\label{lattice multi-norm}
There are many examples of multi-normed spaces in \cite{DP08}; we shall give some below. An important example is the lattice multi-norm described
 in \cite[$\S4.3$]{DP08}.   Indeed, let $(E,\norm{\cdot})$ be a (complex) Banach
lattice. For $n \in \naturals$, set
\[
	\norm{(x_1,\ldots, x_n)}^L_n =\norm{\abs{x_1}\vee\cdots\vee\abs{x_n}}\qquad (x_1,\ldots,x_n\in E)\,.
\]
Then it is easily checked that $(\norm{\cdot}^L_n:\ n\in\naturals)$ is a multi-norm based on $E$; it is the \emph{lattice multi-norm}. \enproof
\end{example}

\section{Multi-normed spaces as tensor norms}
\label{Multi-normed spaces as tensor norms}

\noindent In this section, we shall show that there are bijections between the families of multi-norm structures
based on a normed space $E$ and certain families of norms on the tensor products $\co\otimes E$ and $\linfty\otimes E$.

Suppose that $E$ and $F$ are normed spaces, and that $\norm{\cdot}$ is a norm on $E\otimes F$. Then $\norm{\cdot}$ is a \emph{sub-cross-norm} if $\norm{x\otimes y}\le \norm{x}\norm{y}$ ($x\in E$, $y\in F$), and a \emph{cross-norm} if 
\[
	\norm{x\otimes y}= \norm{x}\norm{y}\quad (x\in E,\ y\in F)\,.
\]
Further, a sub-cross-norm on $E\otimes F$ is \emph{reasonable} if the  linear functional $\lambda\otimes \mu$ is bounded, with $\norm{\lambda\otimes \mu}\le \norm{\lambda}\norm{\mu}$, for each $\lambda\in\dual{E}$ and $\mu\in\dual{F}$. In fact, each reasonable sub-cross-norm on $E\otimes F$ is a cross-norm, and $\norm{\lambda\otimes \mu}= \norm{\lambda}\norm{\mu}$ for each $\lambda\in\dual{E}$ and $\mu\in\dual{F}$.

The \emph{injective tensor norm} $\|\cdot\|_\varepsilon$ on $E\otimes F$ is defined by
identifying $E\otimes F$ with a subspace of $\operators(\dual{E},F)$; here $x\otimes y$ corresponds to the map $\lambda\mapsto\duality{x}{\lambda}y,\ \dual{E}\to F$. The completion of $E\otimes F$ with respect to this
norm is the \emph{injective tensor product}, denoted  by $E\injectivetensor F$.  
The \emph{projective tensor norm} $\norm{\cdot}_\pi$ on $E\otimes F$ is defined by
\[ 
	\|\tau\|_\pi = \inf \sum_{j=1}^n \|x_j\|\|y_j\|\,,
\]
where the infimum is taken over all representations $\tau=\sum_{j=1}^n x_j\otimes y_j$ of $\tau$ in $E\otimes F$; the completion of $E\otimes F$ with respect to this norm is the \emph{projective tensor product}, denoted by $E\projectivetensor F$. The norms $\norm{\cdot}_\varepsilon$ and $\norm{\cdot}_\pi$ are both reasonable cross-norms on $E\otimes F$, and a norm $\norm{\cdot}$ on $E\otimes F$ is a reasonable cross-norm if and only if 
\[
	\norm{z}_\varepsilon\le\norm{z}\le \norm{z}_\pi\quad(z\in E\otimes F)\,.
\]
For $\mu \in (E\projectivetensor F)'$, define $T_\mu \in \operators(E, \dual{F})$ by  $$
\langle y,\,T_{\mu}x\rangle = \langle x\otimes y,\,\mu\rangle\quad (x\in E,\,y\in F)\,.
$$
Then the map $\mu\mapsto T_\mu,\,\; (E\projectivetensor F)' \to \operators(E, \dual{F}),$ is an isometric isomorphism; we shall identify $(E\projectivetensor F)' $ 
and $\operators(E, \dual{F}).$
See \cite[Chapter II]{DF} and \cite[\S 6.1]{Ryan} for accounts of tensor norms on $E\otimes F$ that include the above remarks.

The following characterization of multi-norms is given in \cite[Theorem 2.35]{DP08}. (There is a similar characterization of dual multi-norms
 in \cite[Theorem 2.36]{DP08}.) 

\begin{theorem}\label{linking different levels of a multi-norm}
Let  $(E, \norm{\cdot})$ be a normed space, and suppose that $ \norm{\cdot}_n$ is a  norm  on $E^n$ for each $ n\in \naturals$,
 with $ \norm{x}_1 =\norm{x}\,\;(x \in E)$.  Then the following are equivalent:\smallskip
 \begin{itemize}
 \item[{\rm (a)}] $(\norm{\cdot}_n : n\in \naturals)$ is a multi-norm  on $E$; \smallskip

\item[{\rm (b)}] $\norm{Tx}_m \leq \norm{T}\norm{x}_n$ for each $T \in \operators(\linfty_n,\linfty_m)$, each $x \in E^n$, and
  each $m,n\in \naturals$. \enproof 
  \end{itemize}
\end{theorem}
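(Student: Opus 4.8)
The plan is to prove the equivalence of (a) and (b) by translating the four multi-norm axioms into statements about the operator norms $\norm{T^{(k)}}$ for $T$ ranging over maps between the finite-dimensional spaces $\linfty_n$. The key observation is that condition (b) is exactly the assertion that the sequence $(\norm{\cdot}_n)$ behaves \emph{functorially} with respect to all bounded operators between the spaces $\linfty_n$; so the whole proof rests on showing that this functoriality is generated by the four very special elementary operators encoded in the axioms.

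\medskip

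\noindent\textbf{The direction (b) $\Rightarrow$ (a).} I would show that each axiom is a special case of the inequality in (b) applied to a judiciously chosen operator $T\in\operators(\linfty_n,\linfty_m)$ of norm one (or, for one axiom, a pair of inequalities giving equality). Concretely: for (A1), take $T$ to be the permutation operator on $\linfty_n$ induced by $\sigma$, which is an isometry; applying (b) to both $T$ and $T^{-1}$ yields the equality in (A1). For (A2), take $T=\operatorname{diag}(\alpha_1,\ldots,\alpha_n)$, whose operator norm on $\linfty_n$ is $\max_i\abs{\alpha_i}$, and (b) gives (A2) directly. For (A3), take the coordinate-projection $\linfty_n\to\linfty_{n-1}$ (forgetting the last slot) and the isometric inclusion $\linfty_{n-1}\to\linfty_n$ (appending a zero); both have norm one, and the two applications of (b) give the two inequalities that combine to (A3). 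For (A4), take the ``duplication'' map $\linfty_{n-1}\to\linfty_n$ sending $(t_1,\ldots,t_{n-1})$ to $(t_1,\ldots,t_{n-1},t_{n-1})$, together with the norm-one map in the reverse direction that reads off the first $n-1$ coordinates; each is a contraction between the respective $\linfty$ spaces (here one must check that duplicating a coordinate does not increase the sup norm, which is immediate), so (b) again yields both inequalities, hence the equality (A4).

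\medskip

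\noindent\textbf{The direction (a) $\Rightarrow$ (b).} This is the substantive half. Given $T\in\operators(\linfty_n,\linfty_m)$, I would write $T=(t_{ij})$ as an $m\times n$ matrix and express $Tx$ for $x=(x_1,\ldots,x_n)\in E^n$ through its rows: the $i$-th entry of $Tx$ is $\sum_{j=1}^n t_{ij}x_j$. The strategy is to factor $T$ (up to the operator-norm bound) as a composite of the elementary moves legislated by (A1)--(A4)—scalar multiplication of coordinates, permutation, embedding/forgetting a zero coordinate, and duplication of a coordinate—and then to invoke axiom (A2) and the behaviour on the summing/duplication maps repeatedly, using that a general row $\sum_j t_{ij}x_j$ is obtained from the tuple $(x_1,\ldots,x_n)$ by duplicating and rescaling coordinates and collapsing them. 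The cleanest route is to bound $\norm{Tx}_m$ by realising $T$ on $\linfty_n$ as an averaged combination of contractions built from the four elementary types; since the operator norm of a map $\linfty_n\to\linfty_m$ is $\max_i\sum_j\abs{t_{ij}}$, this maximum is precisely what controls the estimate, and one checks that each elementary step costs at most the factor permitted by the corresponding axiom.

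\medskip

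\noindent The main obstacle I anticipate is the (a) $\Rightarrow$ (b) factorisation: one must argue that \emph{every} operator between two $\linfty$ spaces decomposes into the four elementary building blocks without incurring more than the factor $\norm{T}$. The subtlety is that (A2), (A3), and (A4) individually only handle rescaling, inserting zeros, and exact duplication, whereas a general row is an arbitrary linear combination $\sum_j t_{ij}x_j$; handling the combination requires combining duplication with rescaling and then using axiom (A1)--(A4) in concert with the identity $\max_i\sum_j\abs{t_{ij}}=\norm{T}$ for the $\linfty\to\linfty$ operator norm. I would expect to invoke the established inequality $\max_i\norm{x_i}\le\norm{(x_1,\ldots,x_n)}_n\le\sum_i\norm{x_i}$ (valid already under (A1)--(A3)) to control the ``collapsing'' step, and to treat the general matrix by reducing to the case where all $t_{ij}\ge 0$ via (A2). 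Once the decomposition is set up correctly, the numerical bookkeeping matching the cost to $\norm{T}$ is routine.
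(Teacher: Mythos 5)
The paper itself offers no proof of this theorem: it is quoted verbatim from \cite[Theorem 2.35]{DP08} and closed with $\square$, so the only comparison available is with a correct proof of the statement. Your direction (b) $\Rightarrow$ (a) is complete and correct as sketched: each axiom does follow by applying (b) to the permutation, diagonal, zero-insertion/projection, and duplication/projection operators, all of which have the claimed $\linfty\to\linfty$ norms.

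The gap is in (a) $\Rightarrow$ (b), precisely at the step you flag as the obstacle, and the two devices you offer to close it do not both work. A ``collapsing'' step that replaces two coordinates by their sum is \emph{not} one of the elementary moves licensed by (A1)--(A4), and it cannot be made one: the operator $\linfty_2\to\linfty_1,\ (t_1,t_2)\mapsto t_1+t_2$ has norm $2$, and more generally summing $k$ coordinates costs a factor $k$. Consequently, no factorization of a general $T$ through duplications, rescalings, permutations and collapses can give the constant $\norm{T}=\max_i\sum_j\abs{t_{ij}}$, and your fallback estimate via $\max_i\norm{x_i}\le\norm{\tuple{x}}_n\le\sum_i\norm{x_i}$ only yields $\norm{Tx}_m\le\big(\sum_{i,j}\abs{t_{ij}}\big)\max_i\norm{x_i}$, which is off by a dimensional factor. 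What rescues the argument is the idea you mention only in passing (``an averaged combination of contractions''), and the essential point you omit is \emph{which} contractions: the unit ball of $\operators(\linfty_n,\linfty_m)\cong\bigoplus^{\infty}_{i\in\naturals_m}\lspace^{\,1}_n$ is the convex hull of its extreme points, and these are exactly the matrices having a \emph{single} unimodular entry in each row. Such a matrix acts on $E^n$ as $\tuple{x}\mapsto(\omega_1x_{\eta(1)},\ldots,\omega_mx_{\eta(m)})$ for some $\eta:\naturals_m\to\naturals_n$ and unimodular $\omega_i$ --- no summation occurs at all --- and the bound $\norm{(x_{\eta(1)},\ldots,x_{\eta(m)})}_m\le\norm{(x_1,\ldots,x_n)}_n$ follows from (A1) (sorting), (A4) with induction (duplicating the repeated $x_j$), and (A2) plus (A3) (killing and then deleting the $x_j$ outside the range of $\eta$), with (A2) also absorbing the phases. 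Writing $T=\sum_k\lambda_kP_k$ as a convex combination of these extreme points and using convexity of $\norm{\cdot}_m$ then gives $\norm{Tx}_m\le\sum_k\lambda_k\norm{P_kx}_m\le\norm{x}_n$ for $\norm{T}\le1$, which is (b). Until you identify the extreme points and observe that they avoid the collapsing problem entirely, the hard half of the theorem is not proved.
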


We shall now characterize multi-norm spaces in terms of single norms on a certain tensor product; an analogous `coordinate-free' 
characterization of operator spaces is developed in   \cite{Helemskii2010} and \cite{Pi}. 

\begin{definition} 
Let $E$ be a normed space. Then a norm $\norm{\cdot}$ on $\co\otimes E$ is a \emph{$\co$-norm} if $\norm{\delta_1\otimes x}=\norm{x}$ 
for each $x\in E$ and if the linear operator $T\otimes \id_E$ is bounded on $(\co\otimes E,\|\cdot\|)$ with norm at
most $\|T\|$ for each $T\in\compactoperators(\co)$.

Similarly, a norm $\norm{\cdot}$ on $\linfty\otimes E$ is an \emph{$\linfty$-norm} if $\norm{\delta_1\otimes x}=\norm{x}$ for each $x\in E$ 
and if the linear operator $T\otimes \id_E$ is bounded on $(\linfty\otimes E,\|\cdot\|)$ with norm at
most $\|T\|$ for each $T\in\compactoperators(\linfty)$.
\end{definition}

Note that, in the definition of $\co$-norms or $\linfty$-norms above, we use 
$\compactoperators(\co)$ and $\compactoperators(\linfty)$, respectively. 
However, we shall soon see that we can replace $\compactoperators(\co)$ and $\compactoperators(\linfty)$ by the larger spaces $\operators(\co)$
 and $\operators(\linfty)$, respectively.

\begin{lemma}
Let $E$ be a normed space.  
Then each $\co$-norm on $\co\otimes E$ and each $\linfty$-norm on $\linfty\otimes E$ is a reasonable cross-norm.
\end{lemma}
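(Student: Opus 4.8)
The plan is to verify that any such norm is a \emph{reasonable sub-cross-norm} and then to invoke the fact, recorded above, that every reasonable sub-cross-norm on a tensor product is automatically a cross-norm. Thus two estimates must be established: the sub-cross-norm inequality $\norm{\xi\otimes x}\le\norm{\xi}\norm{x}$ on elementary tensors, and the dual estimate $\norm{\lambda\otimes\mu}\le\norm{\lambda}\norm{\mu}$ for the induced functionals. In both cases the engine is the same: feed a carefully chosen rank-one (hence compact) operator into the defining inequality $\norm{(T\otimes\id_E)z}\le\norm{T}\norm{z}$, and then read off the conclusion using the normalization $\norm{\delta_1\otimes w}=\norm{w}$. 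I will carry out the argument for $\co$; the case of $\linfty$ is word-for-word the same, using $\compactoperators(\linfty)$ in place of $\compactoperators(\co)$ and noting that $\delta_1\in\linfty$, that coordinate evaluation lies in $\dual{\linfty}$, and that every rank-one map is compact.

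For the sub-cross-norm, fix $\xi\in\co$ and $x\in E$, and let $R=\delta_1\otimes\xi$ be the rank-one operator on $\co$, in the notation of \eqref{tensor form of finite-rank}, with $\delta_1\in\dual{\co}$ the first coordinate functional and $\xi\in\co$. Then $R\delta_1=\xi$ and $\norm{R}=\norm{\xi}$. Since $R\in\finiterankoperators(\co)\subset\compactoperators(\co)$, applying the definition of a $\co$-norm to $R\otimes\id_E$ evaluated at $\delta_1\otimes x$ yields
\[
\norm{\xi\otimes x}=\norm{(R\otimes\id_E)(\delta_1\otimes x)}\le\norm{R}\norm{\delta_1\otimes x}=\norm{\xi}\norm{x}\,,
\]
where the last equality uses the normalization $\norm{\delta_1\otimes x}=\norm{x}$.

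For the dual estimate, fix $\lambda\in\dual{\co}$ and $\mu\in\dual{E}$, and take an arbitrary $z=\sum_i\xi_i\otimes y_i\in\co\otimes E$. This time let $S=\lambda\otimes\delta_1\in\compactoperators(\co)$, now with $\delta_1\in\co$, so that $\norm{S}=\norm{\lambda}$ and $(S\otimes\id_E)z=\delta_1\otimes w$, where $w=\sum_i\duality{\xi_i}{\lambda}y_i\in E$. The defining inequality together with the normalization then gives $\norm{w}=\norm{\delta_1\otimes w}\le\norm{S}\norm{z}=\norm{\lambda}\norm{z}$, whence
\[
\abs{\duality{z}{\lambda\otimes\mu}}=\abs{\duality{w}{\mu}}\le\norm{\mu}\norm{w}\le\norm{\lambda}\norm{\mu}\norm{z}\,.
\]
This is exactly the required bound, and, since it holds for every $z\in\co\otimes E$, no separate density or completion argument is needed. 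Together with the sub-cross-norm inequality this shows that $\norm{\cdot}$ is a reasonable sub-cross-norm, and hence a reasonable cross-norm.

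The steps that require genuine care are the bookkeeping identities $\norm{R}=\norm{\xi}$ and $\norm{S}=\norm{\lambda}$ for these rank-one operators and the computation of $(S\otimes\id_E)z=\delta_1\otimes w$; the crucial mechanism is that the normalization axiom $\norm{\delta_1\otimes w}=\norm{w}$ converts an operator-norm bound into a bound on $\norm{w}$ in $E$, which then pairs against $\mu$. I do not anticipate any real analytic obstacle here, since the operators used are finite-rank (so automatically compact, and the hypothesis on $\compactoperators(\co)$ or $\compactoperators(\linfty)$ applies), and both estimates are already visible on arbitrary finite tensors.
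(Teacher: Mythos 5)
Your proof is correct and follows essentially the same route as the paper: both arguments feed rank-one (hence compact) operators on $\co$ (respectively $\linfty$) into the defining inequality $\norm{(T\otimes\id_E)z}\le\norm{T}\norm{z}$ and use the normalization $\norm{\delta_1\otimes x}=\norm{x}$, your operator $S=\lambda\otimes\delta_1$ being exactly the paper's $T\colon b\mapsto\duality{f}{b}\delta_1$. The only cosmetic difference is that the paper phrases the conclusion as the sandwich $\norm{\cdot}_\varepsilon\le\norm{\cdot}\le\norm{\cdot}_\pi$, whereas you verify the definition of a reasonable sub-cross-norm directly and invoke the automatic upgrade to a cross-norm; these are equivalent formulations recorded in the same paragraph of the paper.
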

\begin{proof}
Suppose that $\norm{\cdot}$ is a $\co$-norm on $\co\otimes E$. First, given $a,b\in\co$ of the same norm and given $x\in E$, we see that $\norm{b\otimes x}\le\norm{a\otimes x}$ by considering the rank-one operator of norm $1$ in $\compactoperators(\co)$ which maps $a$ to $b$. It follows that
\[
	\norm{a\otimes x}=\norm{a}\norm{\delta_1\otimes x}=\norm{a}\norm{x}\quad(a\in \co,\ x\in E)\,.
\]
From this, it follows from the triangle inequality that $\norm{\tau} \leq \norm{\tau}_\pi$ ($\tau\in \co\otimes E$).

Let $\tau = \sum_{j=1}^k b_j \otimes x_j \in \co\otimes E\subset \ell^{\,\infty}\otimes E$\,; as in equation \eqref{tensor form of finite-rank}, $\tau$ is identified with the map 
\[
	\tau:f\mapsto \sum_{j=1}^k \duality{f}{b_j}x_j \quad\textrm{in}\quad \finiterankoperators(\lone,E)\subset\operators(\lone,E)\,.
\]
Take $f\in\lone$, and consider the rank-one operator $T\in\compactoperators(\co)$ defined by setting  $Tb = \duality{f}{b} \delta_1$ for $b\in \co$, where we regard $\delta_1$ as an element of $\co$. 
Then  we see that $\norm{T}=\norm{f}$ and 
\begin{align*} 
\norm{\tau(f)}&= \Big\| \sum_{j=1}^k \duality{f}{b_j} x_j \Big\| =\Big\|\sum_{j=1}^k \duality{f}{b_j} \delta_1 \otimes x_j \Big\|
=\|(T\otimes \id_E)(\tau)\| \leq \|T\| \|\tau\| =\|f\| \|\tau\|\,,
\end{align*}
using the fact that $\norm{\cdot}$ is a $\co$-norm on $\co\otimes E$. 
We conclude that $\|\tau\|_\varepsilon \leq \|\tau\|$.

Thus $\|\tau\|_\varepsilon \leq \|\tau\|\le \norm{\tau}_\pi$ for each $\tau\in \co\otimes E$, and so $\norm{\cdot}$ is a reasonable cross-norm on $\co\otimes E$. 

The case of an $\linfty$-norm on $\linfty\otimes E$ can be dealt with similarly.
\end{proof}

\begin{theorem}\label{multi-norm as tensor} 
Let $E$ be a normed space. Then there exist bijective correspondences between:
\begin{enumerate}
\item the collection of multi-norms based on $E$;
\item the collection of norms $\norm{\cdot}$ on $\finiterankoperators(\lone,E)$ with the properties that $\norm{\delta_1\otimes x}=\norm{x}$ for each $x\in E$ and that
\begin{align}\label{Eq: multi-norm as tensor 2}
	\norm{S\circ T} \le\norm{T:\lone\to\lone}\norm{S}\quad(T\in\operators(\lone),\ S\in\finiterankoperators(\lone,E))\,;
\end{align}
\item the collection of $\co$-norms on $\co\otimes E$; and
\item the collection of $\linfty$-norms on $\linfty\otimes E$.
\end{enumerate}
\end{theorem}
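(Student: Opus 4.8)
The plan is to route all four correspondences through a single construction and the operator characterization of Theorem \ref{linking different levels of a multi-norm}. Throughout I identify $\finiterankoperators(\lone,E)$ with $\linfty\otimes E$ via $\lambda\otimes x\leftrightarrow\lambda\otimes x$ (with $\lambda\in\linfty=\dual{(\lone)}$), so that $\co\otimes E\subseteq\linfty\otimes E=\finiterankoperators(\lone,E)$, and I regard $\linfty_n\otimes E$ (the tensors supported on the first $n$ coordinates) as the level-$n$ subspace, identified with $E^n$ through $\sum_{i=1}^n\delta_i\otimes x_i\leftrightarrow(x_1,\ldots,x_n)$. To any candidate norm $\norm\cdot$ satisfying $\norm{\delta_1\otimes x}=\norm x$ I attach its \emph{level norms} $\norm{(x_1,\ldots,x_n)}_n:=\norm{\sum_{i=1}^n\delta_i\otimes x_i}$; conversely, to a multi-norm $(\norm\cdot_n)$ I attach the \emph{sup-formula} $\norm\tau:=\sup_N\norm{(P_N\otimes\id_E)\tau}_N$, where $P_N$ is truncation to the first $N$ coordinates and $(P_N\otimes\id_E)\tau$ is read in $E^N$. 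Axiom (A2) makes this a non-decreasing limit, while (A2) and the collapse axiom (A4) bound it by $\sum_j\norm{b_j}_\infty\norm{x_j}$ over any representation $\tau=\sum_j b_j\otimes x_j$; thus it is a finite reasonable cross-(semi)norm, and a norm since it dominates the norm of each coordinate of $\tau$.

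First I would show the level norms of any admissible norm form a multi-norm, via Theorem \ref{linking different levels of a multi-norm}: given $T\in\operators(\linfty_n,\linfty_m)$, extend it to a finite-rank (hence compact) operator $\tilde T=\iota_m\circ T\circ P_n$ on $\co$ (resp.\ on $\linfty$) with $\norm{\tilde T}\le\norm T$ that agrees with $T$ on the first $n$ coordinates; the defining operator inequality then gives $\norm{Tx}_m\le\norm T\,\norm{(x_1,\ldots,x_n)}_n$, which is condition (b). Conversely, I would prove that the sup-formula of a multi-norm satisfies the operator inequality for \emph{every} bounded $R$ (not merely compact ones), since $(R\otimes\id_E)\tau$ depends only on $R$ restricted to the finite-dimensional span $V$ of the components of $\tau$: by Dini's theorem on the unit sphere of $V$ the truncations $P_L|_V$ are eventually isomorphisms with $\norm{(P_L|_V)^{-1}}\to1$, so for large $L$ one factors $P_M R|_V=A\circ P_L|_V$ with $A\in\operators(\linfty_L,\linfty_m)$, $\norm A\le(1+\varepsilon)\norm R$ (extending $A$ off $P_L V$ by the $1$-injectivity of $\linfty_m$); applying Theorem \ref{linking different levels of a multi-norm} to $A$ and letting $\varepsilon\to0$, $M\to\infty$ gives the bound. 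In particular the sup-formula is a $\co$-norm, an $\linfty$-norm, and satisfies \eqref{Eq: multi-norm as tensor 2}.

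It remains to check that each admissible norm is recovered as the sup-formula of its own levels, which is what makes the two maps mutually inverse. For a $\co$-norm this is routine: it is a reasonable cross-norm by the preceding lemma, so $\coo\otimes E$ is dense and $(P_N\otimes\id_E)\tau\to\tau$ in norm, whence $\norm\tau=\lim_N\norm{(P_N\otimes\id_E)\tau}_N$. \textbf{The main obstacle is the $\linfty$ case}, where truncations do \emph{not} converge in norm (as for $\mathbf 1\otimes x$), so the $\co$ argument fails. Here I would instead use block operators: for a simple tensor $\sigma=\sum_{l=1}^n\chi_{A_l}\otimes y_l$ the finite-rank operators $R\colon c\mapsto\sum_l c(i_l)\delta_l$ and $R'\colon b\mapsto\sum_l b_l\chi_{A_l}$ both have norm $\le1$ and satisfy $(R\otimes\id_E)\sigma=\sum_l\delta_l\otimes y_l$ and $(R'\otimes\id_E)(\sum_l\delta_l\otimes y_l)=\sigma$; they therefore force $\norm\sigma=\norm{(y_1,\ldots,y_n)}_n$, which by (A1) and (A4) is also the sup-formula value. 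Since simple functions are dense in $\linfty$ and both norms are reasonable cross-norms, this extends to all $\sigma$, proving that every $\linfty$-norm equals the sup-formula of its levels.

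The decisive point is that these block operators use only the coordinate functionals $\delta_{i_l},\delta_l\in\lone$, so they are simultaneously finite-rank (compact) \emph{and} weak$^*$-continuous, that is, adjoints of operators on $\lone$. This unifies (2) and (4): translating \eqref{Eq: multi-norm as tensor 2} through $S\circ T=(\dual T\otimes\id_E)S$ (with $\norm{\dual T}=\norm T$) shows that (2) is the operator inequality over all weak$^*$-continuous $R=\dual T$, while (4) is the same inequality over all compact $R$. Because the arguments above show that every norm in either class equals the sup-formula of a multi-norm, and that the sup-formula satisfies the inequality for \emph{all} bounded $R$, the three classes (weak$^*$-continuous, compact, all bounded) impose the same constraint; this is precisely the remark that $\compactoperators$ may be replaced by $\operators$, and it identifies the norms in (2) with the $\linfty$-norms of (4). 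Finally, restriction sends an $\linfty$-norm to a $\co$-norm and the sup-formula sends a $\co$-norm back, and these are inverse because each norm equals the sup-formula of its common levels. Assembling the level map and the sup-formula then yields the four required bijections, all compatible with the central multi-norm.
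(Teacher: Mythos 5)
Your proposal is correct, and its overall architecture matches the paper's: both routes funnel every clause through Theorem \ref{linking different levels of a multi-norm} and then show that an admissible norm on $\linfty\otimes E$ (or on $\finiterankoperators(\lone,E)$) is completely determined by its restrictions to the finite levels $\linfty_N\otimes E$. The genuine divergence is in how the $\linfty$ half of that determinacy is established. To show that the norm built from a multi-norm satisfies the operator inequality for \emph{all} of $\operators(\linfty)$, the paper lifts through $\lone/X$ and $\lone/Y$ using the projectivity of $\lone$, and separately, in its uniqueness step, invokes a subspace of $\linfty_N$ at Banach--Mazur distance $<1+\varepsilon$ from the span $F$ of the coefficients, extending via the injectivity of $\linfty_N$ and $\linfty$; your Dini argument on the unit sphere of $F$ constructs that nearby subspace explicitly as $P_LF$ and yields the factorization $P_MR|_F=A\circ P_L|_F$ in one stroke, handling compact, weak$^*$-continuous, and arbitrary bounded operators simultaneously. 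More substantially, to recover an $\linfty$-norm from its levels you replace the paper's local-structure argument by the density of finitely-valued sequences in $\linfty$ together with the norm-one block operators $R,R'$, which pin down the norm of each ``simple'' tensor \emph{exactly} rather than up to $1+\varepsilon$, and then extend by $\norm{\cdot}_\pi$-continuity; and your observation that all the operators needed are simultaneously finite-rank and adjoints of operators on $\lone$ unifies clauses (ii) and (iv), where the paper instead runs clause (ii) through the $\mboperators(\lone,E)$ machinery. What your route buys is a more elementary, self-contained uniqueness argument; what the paper's buys is independence from the special dense set of finitely-valued sequences, since projectivity/injectivity arguments transfer to other settings. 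The points you leave implicit --- that the sup-formula's levels return the original multi-norm (immediate from (A3)), and that the sets $A_l$ in a simple tensor must be taken pairwise disjoint, which a common refinement always arranges --- are harmless.
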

\begin{proof}
Let $(\norm{\cdot}_n:\ n\in\naturals)$ be a multi-norm based on $E$. Consider the minimum multi-norm on $\lone$, so that, by Proposition \ref{operators and minimum mn},  $\mboperators(\lone)=\operators(\lone)$ and $\norm{T}_{mb}=\norm{T}$ for $T\in\operators(\lone)$. Since $\finiterankoperators(\lone,E)\subset\mboperators(\lone,E)$, we can consider the norm $\norm{\cdot}$ on $\finiterankoperators(\lone,E)$ to be the restriction of the norm $\norm{\cdot}_{mb}$ of $\mboperators(\lone,E)$. For each $x\in E$, we have, by Proposition \ref{multi-bounded operator from lone},
\[
	\norm{\delta_1\otimes x}=\norm{\delta_1\otimes x}_{mb}=\multibound\set{(\delta_1\otimes x)(\delta_k):\ k\in\naturals}=\norm{x}\,.
\]
 It now follows easily that the norm $\norm{\cdot}$ on $\finiterankoperators(\lone,E)$ satisfies \eqref{Eq: multi-norm as tensor 2}, and hence $\norm{\cdot}$ satisfies the conditions in clause (ii).

Since $\co\otimes E\subset\finiterankoperators(\lone,E)$, once we have a norm $\norm{\cdot}$ on $\finiterankoperators(\lone,E)$ with the properties as stated in clause (ii), we can give $\co\otimes E$ the norm which is the restriction of the norm $\norm{\cdot}$ on $\finiterankoperators(\lone,E)$. It is easily checked that this norm is a $\co$-norm on $\co\otimes E$; in fact, for each $T\in\operators(\co)$, the linear operator $T\otimes \id_E$ is bounded on $(\co\otimes E,\|\cdot\|)$ with norm at most $\norm{T}$.

Using the identification of $\finiterankoperators(\lone,E)$ with $\linfty\otimes E$ given in equation \eqref{tensor form of finite-rank}, we can give $\linfty\otimes E$ a norm $\norm{\cdot}$ with properties similar to those of the $\linfty$-norms, except that $\compactoperators(\linfty)$ is replaced by the set $\set{\dual{S}:\ S\in\operators(\lone)}$. Let $T\in\operators(\linfty)$ and $\sigma\in\linfty\otimes E$. We wish to show that
\[
	\norm{(T\otimes \id_E)(\sigma)}\le\norm{T}\norm{\sigma}\,.
\]
Indeed, suppose that $\sigma=\sum_{i=1}^n a_i\otimes x_i$, where $a_1,\ldots,a_n\in\linfty$ and $x_1,\ldots,x_n\in E$, and take $\varepsilon>0$.  Let $F$ be the linear span of the set $\set{a_1,\ldots, a_n}$ in $\linfty$, set $G=T(F)$, and consider  \mbox{$T_0:F\to G$} to be the restriction of $T$. We can identify $\dual{F}$ and $\dual{G}$ as $\lone/X$ and $\lone/Y$, respectively, where 
\[
	X=\set{f\in\lone:\ f\,|\,F=0}\quad \textrm{and}\quad Y=\set{f\in\lone:\ f\,|\,G=0}
\] 
are closed subspaces of finite codimension in $\lone$. By the projectivity of $\lone$ \cite[p. 72]{DF}, there is an operator $S\in\operators(\lone)$ such that the following diagram commutes:
\begin{equation*}
\SelectTips{eu}{12}\xymatrix{\lone \ar[r]^-{S} \ar@{->>}[d]_{\pi_Y} & \lone \ar@{->>}[d]^{\pi_X} \\ 
            \lone/Y=\dual{G} \ar[r]^-{\dual{T_0}}& \lone/X=\dual{F}\,; }
\end{equation*}
moreover, we can choose $S$ such that $\norm{S}\le\norm{\dual{T_0}}+\varepsilon$. It follows that 
 $\dual{S}$ and $T$ agree on $F$ and that $\norm{S}\le \norm{T}+\varepsilon$. Thus
 \[
 	\norm{(T\otimes \id_E)(\sigma)}=\norm{(\dual{S}\otimes \id_E)(\sigma)}\le\norm{S}\norm{\sigma}\le(\norm{T}+\varepsilon)\norm{\sigma}\,.
 \]
Letting $\varepsilon\searrow 0$, we obtain the desired inequality. 
In particular, this shows that $\norm{\cdot}$ is an $\linfty$-norm on $\linfty\otimes E$.

Thus, we have constructed maps from the collection of multi-norms based on $E$ into the collections specified in the clauses (i), (ii), and (iii). 

To define the (proposed) inverses of these maps, first suppose that we already have  a $\co$-norm  $\norm{\cdot}$ on $\co\otimes E$. Then we define
\[ 
	\|(x_1,x_2,\ldots,x_n)\|_n = \Big\| \sum_{j=1}^n \delta_j \otimes x_j \Big\| \qquad \big(x_1,\ldots,x_n \in E,\ n\in\naturals \big)\,. 
\]
Clearly $\norm{\cdot}_n$ is a norm on $E^n$ for each $n\in\naturals$, and it is easy to see that clause (b) of Theorem \ref{linking different levels of a multi-norm} is satisfied, and so it follows from Theorem \ref{linking different levels of a multi-norm} that $(\norm{\cdot}_n : n\in \naturals)$ is a multi-norm  on $E$.

To prove that the correspondences defined above are bijections, it is sufficient to show that any given  multi-norm
 $(\norm{\cdot}_n:\ n\in\naturals)$ based on $E$ determines uniquely an $\linfty$-norm  $\norm{\cdot}$ on $\linfty\otimes E$ and a $\co$-norm  $\norm{\cdot}$  on $\co\otimes E$ such that
\begin{align}\label{Eq: multi-norm as tensor 1} 
	\Big\| \sum_{j=1}^n \delta_j \otimes x_j \Big\|=\|(x_1,x_2,\ldots,x_n)\|_n  \qquad \big(x_1,\ldots,x_n \in E ,\ n\in\naturals \big)\,. 
\end{align}
So, let $\sigma=\sum_{i=1}^n a_i\otimes x_i$ be an element of $\linfty\otimes E$, and take $\varepsilon>0$. Let $F$ be the linear span 
of the set $\set{a_1,\ldots,a_n}$ in $\linfty$. Then there exist $N\in\naturals$ and a subspace $G$ of $\linfty_N$ such that the Banach--Mazur 
distance \mbox{$d(F,G)<1+\varepsilon$,} and so there exists $T_0:F\to G$ with $\norm{T_0}<1+\varepsilon$ and $\norm{T_0^{-1}}=1$.
 The injectivity of $\linfty_N$ and $\linfty$ then implies that $T_0$ and $T_0^{-1}$ extend to linear operators $T$ and $S$, respectively, 
in $\operators(\linfty)$ with $\norm{T}=\norm{T_0}<1+\varepsilon$ and $\norm{S}=\norm{T_0^{-1}}=1$ and with the range of $T$ contained in
 $\linfty_N$.  For each $i\in\naturals_n$, set $a_{i,\varepsilon}=T_0a_i$, and then set 
\[
	\sigma_\varepsilon=\sum_{i=1}^n a_{i,\varepsilon}\otimes x_i \in\linfty_N\otimes E\,.
\]
  It follows that
\[
	\sigma_\varepsilon=(T\otimes\id_E)(\sigma) \quad\textrm{and}\quad \sigma=(S\otimes \id_E)(\sigma_\varepsilon)=(STS\otimes\id_E)(\sigma_\varepsilon)\,.
\]
Note that both $T$ and $STS$ belong to $\compactoperators(\linfty)$, and so the $\linfty$-norm property implies that 
\[
	(1+\varepsilon)^{-1}\norm{\sigma}\le\norm{\sigma_\varepsilon}\le (1+\varepsilon)\norm{\sigma}\,. 
\]
Thus  we obtain $\norm{\sigma}=\lim_{\varepsilon\searrow 0}\norm{\sigma_\varepsilon}$. Further note that, since $\sigma_\varepsilon\in\coo\otimes E$, 
by \eqref{Eq: multi-norm as tensor 1}, its norm is totally determined by the multi-norm $(\norm{\cdot}_n:\ n\in\naturals)$. Hence, the $\linfty$-norm
 $\norm{\cdot}$ on $\linfty\otimes E$ is determined completely by the given multi-norm on $E$. 

For the case of a $\co$-norm $\norm{\cdot}$ on $\co\otimes E$, notice that, for each $\sigma\in\co\otimes E$, we have
\[
	\norm{\sigma}=\lim_{n\to\infty}\norm{(P_n\otimes \id_E)(\sigma)},
\]
where $P_n\in\compactoperators(\co)$ is the projection onto the first $n$ coordinates. We see, again by \eqref{Eq: multi-norm as tensor 1}, 
 that the norm of $(P_n\otimes \id_E)(\sigma)$  is  determined by the multi-norm $(\norm{\cdot}_n:\ n\in\naturals)$.
\end{proof}

Thus, in particular, the study of multi-norms based on a normed space $E$ is equivalent to the study of $\co$-norms on $\co\otimes E$. 

\begin{remark}
Let $E$ be a multi-normed space with the associated $\co$-norm $\norm{\cdot}$ on $\co\otimes E$. From the theorem above, it follows that,
 for each $\sigma\in\co\otimes E$, we have
\[
	\norm{\sigma}=\norm{\sigma}_{\mboperators(\lone,E)}=\multibound\set{\sigma(\delta_k):\ k\in\naturals}\,,
\]
where, in the last two terms, $\sigma$ is considered as an element of $\finiterankoperators(\lone,E)$.
\end{remark}

Theorem \ref{multi-norm as tensor} and its proof imply the following.

\begin{corollary}
Let $E$ be a normed space. 
\begin{enumerate}
\item Suppose that $\norm{\cdot}$ is a $\co$-norm on $\co\otimes E$. Then, for each $T\in\operators(\co)$,
the operator $T\otimes\id_E$ is bounded on $(\co\otimes E, \|\cdot\|)$, with norm $\|T\|$. 
\item Suppose that $\norm{\cdot}$ is an $\linfty$-norm on $\linfty\otimes E$. Then, for each $T\in\operators(\linfty)$,
the operator $T\otimes\id_E$ is bounded on $(\linfty\otimes E, \|\cdot\|)$, with norm $\|T\|$.\enproof
\end{enumerate}
\end{corollary}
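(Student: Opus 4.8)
The plan is to read off both clauses from Theorem \ref{multi-norm as tensor} together with the computations already carried out inside its proof, adding only a short lower-bound argument. Since the two clauses are established in exactly the same way — with $\co$ replaced by $\linfty$ throughout — I would present clause (i) in detail and then remark that clause (ii) is identical.

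The first step is to obtain the upper bound $\norm{T\otimes\id_E}\le\norm{T}$ for every $T\in\operators(\co)$. By the bijective correspondence of Theorem \ref{multi-norm as tensor} between $\co$-norms on $\co\otimes E$ and multi-norms based on $E$, the given $\co$-norm $\norm{\cdot}$ is precisely the one produced by the forward construction of that theorem applied to some multi-norm $(\norm{\cdot}_n:\ n\in\naturals)$. But in the course of that construction it was observed that the resulting $\co$-norm satisfies $\norm{(T\otimes\id_E)(\sigma)}\le\norm{T}\norm{\sigma}$ for every $\sigma\in\co\otimes E$ and for \emph{all} $T\in\operators(\co)$, and not merely for $T\in\compactoperators(\co)$, which is all that the definition of a $\co$-norm guarantees a priori. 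Hence this bound is available for our $\norm{\cdot}$, and so $\norm{T\otimes\id_E}\le\norm{T}$.

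For the reverse inequality I would use that $\norm{\cdot}$ is a reasonable cross-norm, by the Lemma preceding Theorem \ref{multi-norm as tensor}. Fix $x\in E$ with $\norm{x}=1$. For any $a\in\co$ we then have $\norm{a\otimes x}=\norm{a}$, while $(T\otimes\id_E)(a\otimes x)=(Ta)\otimes x$, so that $\norm{(T\otimes\id_E)(a\otimes x)}=\norm{Ta}$. Taking the supremum over $a\in\co_{[1]}$ yields $\norm{T\otimes\id_E}\ge\norm{T}$, and combining the two bounds gives $\norm{T\otimes\id_E}=\norm{T}$, as required. The identical argument with $\linfty$ in place of $\co$ proves clause (ii), now invoking the boundedness of $T\otimes\id_E$ for $T\in\operators(\linfty)$ that was established in the proof of Theorem \ref{multi-norm as tensor} via the projectivity of $\lone$.

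I do not anticipate any genuine obstacle, since all of the substantive work — boundedness of $T\otimes\id_E$ for arbitrary bounded, rather than merely compact, operators — already resides in the proof of Theorem \ref{multi-norm as tensor}. The only two points needing attention are: (a) invoking bijectivity, so as to know that an arbitrary $\co$-norm (respectively, $\linfty$-norm) does arise from a multi-norm and hence inherits the stronger boundedness property; and (b) the elementary cross-norm computation that supplies the matching lower bound.
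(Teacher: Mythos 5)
Your proposal is correct and follows essentially the same route as the paper, which derives the corollary directly from Theorem \ref{multi-norm as tensor} and its proof: the bijectivity of the correspondence shows that every $\co$-norm (respectively, $\linfty$-norm) arises from the forward construction, where the bound $\norm{(T\otimes\id_E)(\sigma)}\le\norm{T}\norm{\sigma}$ was already established for all $T\in\operators(\co)$ (respectively, via the projectivity of $\lone$, for all $T\in\operators(\linfty)$). Your explicit cross-norm computation for the lower bound $\norm{T\otimes\id_E}\ge\norm{T}$ is exactly the step the paper leaves implicit, and it is correct.
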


\begin{corollary}  Let $E$ be a normed space.
The maximum multi-norm structure based on $E$ corresponds to the projective tensor norm  on $\co\otimes E$,
and the minimum multi-norm structure based on $E$  corresponds to the injective tensor norm on $\co\otimes E$. \enproof
\end{corollary}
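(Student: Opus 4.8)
The plan is to read the corollary off the order-isomorphism supplied by Theorem~\ref{multi-norm as tensor}, once I have identified $\norm{\cdot}_\pi$ and $\norm{\cdot}_\varepsilon$ as, respectively, the greatest and the least $\co$-norm on $\co\otimes E$. Recall that, under Theorem~\ref{multi-norm as tensor}, a multi-norm $(\norm{\cdot}_n:n\in\naturals)$ and its associated $\co$-norm $\norm{\cdot}$ are linked by the identity \eqref{Eq: multi-norm as tensor 1}, and that this gives a bijection between the lattice $(\E_E,\le)$ of multi-norms based on $E$ and the set of all $\co$-norms on $\co\otimes E$. So the strategy is: locate $\norm{\cdot}_\pi$ and $\norm{\cdot}_\varepsilon$ as the extreme points of the latter family, and transport the conclusion across the bijection.

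First I would check that $\norm{\cdot}_\pi$ and $\norm{\cdot}_\varepsilon$, restricted to $\co\otimes E$, are indeed $\co$-norms. Since each is a cross-norm and $\norm{\delta_1}=1$ in $\co$, we get $\norm{\delta_1\otimes x}=\norm{\delta_1}\norm{x}=\norm{x}$ in both cases; and the metric mapping property of the projective and injective tensor norms, which is immediate from their definitions, gives $\norm{T\otimes\id_E}\le\norm{T}$ for every $T\in\operators(\co)$, in particular for $T\in\compactoperators(\co)$. Thus both are $\co$-norms. On the other hand, by the Lemma preceding Theorem~\ref{multi-norm as tensor}, every $\co$-norm $\norm{\cdot}$ is a reasonable cross-norm, and so $\norm{z}_\varepsilon\le\norm{z}\le\norm{z}_\pi$ for all $z\in\co\otimes E$. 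Hence, under the pointwise ordering, $\norm{\cdot}_\varepsilon$ is the least and $\norm{\cdot}_\pi$ the greatest $\co$-norm on $\co\otimes E$.

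It then remains to see that the bijection of Theorem~\ref{multi-norm as tensor} is an order isomorphism, so that it carries the maximum and minimum elements of $\E_E$ to the greatest and least $\co$-norms. In one direction this is immediate from \eqref{Eq: multi-norm as tensor 1}: if $\norm{\cdot}^1\le\norm{\cdot}^2$ as $\co$-norms, then the associated multi-norms satisfy $\norm{\tuple{x}}^1_n\le\norm{\tuple{x}}^2_n$ for all $\tuple{x}\in E^n$ and all $n$. For the converse I would use the limit formula $\norm{\sigma}=\lim_{n\to\infty}\norm{(P_n\otimes\id_E)(\sigma)}$ established in the proof of Theorem~\ref{multi-norm as tensor}, which reduces a comparison of two $\co$-norms on all of $\co\otimes E$ to the comparison of the corresponding multi-norms on the finitely-supported tensors of $\coo\otimes E$. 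Combining the two directions, the bijection is an order isomorphism, and so the maximum multi-norm corresponds to $\norm{\cdot}_\pi$ and the minimum multi-norm to $\norm{\cdot}_\varepsilon$, as required.

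The genuinely non-formal point, which I would single out as the main obstacle, is the projective/maximum case; the injective/minimum case is in fact a one-line computation, since under the embedding $\co\otimes E\hookrightarrow\operators(\dual{\co},E)=\operators(\lone,E)$ defining $\norm{\cdot}_\varepsilon$ the element $\sum_{j=1}^n\delta_j\otimes x_j$ is the operator $\lambda\mapsto\sum_j\lambda_jx_j$, whose norm is $\max_j\norm{x_j}=\norm{\tuple{x}}^{\min}_n$. As a direct cross-check of the lattice argument for the harder case, I would verify the projective identification by hand: via the isometric identification $(\co\projectivetensor E)'=\operators(\co,\dual{E})$, the pairing of a finitely-supported $\sigma=\sum_{j=1}^n\delta_j\otimes x_j$ with $\Phi$ depends only on $\lambda_j:=\Phi\delta_j$ and equals $\sum_j\duality{x_j}{\lambda_j}$; the operator norm of such a $\Phi$ (taken with $\Phi=\Phi P_n$) equals $\mu_{1,n}(\lambda_1,\ldots,\lambda_n)$ by \eqref{weak p-summing norm on dual space}; and taking the supremum over the unit ball recovers exactly the formula \eqref{max-mn} for $\norm{\tuple{x}}^{\max}_n$. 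This is where the real content sits, the rest being the transport of extreme elements across an order isomorphism.
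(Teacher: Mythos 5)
Your argument is correct and follows essentially the route the paper intends: the corollary is left as an immediate consequence of Theorem \ref{multi-norm as tensor}, namely that $\norm{\cdot}_\varepsilon$ and $\norm{\cdot}_\pi$ are themselves $\co$-norms, that every $\co$-norm is a reasonable cross-norm and hence sandwiched between them, and that the correspondence with multi-norms preserves the pointwise order, so the extreme elements match up. Your direct verification of the two identifications (the injective norm of $\sum_j\delta_j\otimes x_j$ giving $\max_j\norm{x_j}$, and the projective norm recovering \eqref{max-mn} via $(\co\projectivetensor E)'=\operators(\co,\dual{E})$) is a sound cross-check and accords with the alternative computation the paper cites from \cite[Theorem 3.43]{DP08}.
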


A related result about the maximum multi-norm is proved by more elementary calculations in \cite[Theorem 3.43]{DP08}.

\begin{remark}
As discussed in \cite[$\S 2.4.5$]{DP08}, it follows  from the previous corollary that our notion of a 
$\co$-norm is equivalent to  that of a norm on $\co\otimes E$ satisfying `condition (P)' of Pisier. This condition is the main topic of the memoir \cite{MN}. 
The following theorem gives a general \emph{representation theorem for multi-normed spaces}. It shows a universal property of the lattice
 multi-norms described in Example \ref{lattice multi-norm}; the result follows from a theorem of Pisier stated as \cite[Th\'{e}or\`{e}me 2.1]{MN}. 
We are indebted to the late Professor Nigel Kalton for the reference to \cite{MN}.
\end{remark}

\begin{theorem} Let $((E_n,\norm{\cdot}_n):\ n \in \naturals)$ be a
multi-Banach space. Then there is a Banach lattice $X$ and an isometric
embedding $J: E \to X$ such that, for each $n \in \naturals$, we have
\[
	\norm{(Jx_1,\ldots,Jx_n)}^L_n = \norm{(x_1,\ldots, x_n)}_n\qquad (x_1,\ldots,x_n \in E)\,. \tag*{\enproof}
\]
\end{theorem}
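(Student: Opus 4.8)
The plan is to deduce the statement from the tensor-norm description of multi-norms in Theorem~\ref{multi-norm as tensor} together with a representation theorem of Pisier. The key observation is that the lattice multi-norm of Example~\ref{lattice multi-norm} is, on the tensor-product side, exactly the kind of $\co$-norm that Pisier's construction produces, so that the whole proof reduces to a translation between three equivalent languages: multi-norms on $E$, $\co$-norms on $\co\otimes E$, and norms satisfying Pisier's `condition (P)'.

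First, I would translate the hypothesis. Let $(\norm{\cdot}_n : n\in\naturals)$ be the given multi-norm based on the underlying Banach space $E$. By Theorem~\ref{multi-norm as tensor}, it corresponds to a unique $\co$-norm $\norm{\cdot}$ on $\co\otimes E$ determined by
\[
	\Big\|\sum_{j=1}^n \delta_j\otimes x_j\Big\| = \norm{(x_1,\ldots,x_n)}_n \qquad (x_1,\ldots,x_n\in E,\ n\in\naturals)\,.
\]
As recorded in the Remark following the corollaries to Theorem~\ref{multi-norm as tensor} (see also \cite[\S 2.4.5]{DP08}), a norm on $\co\otimes E$ is a $\co$-norm precisely when it satisfies Pisier's `condition (P)', which is exactly the hypothesis of \cite[Th\'{e}or\`{e}me 2.1]{MN}.

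Next, I would invoke that theorem of Pisier. It provides a Banach lattice $X$ and an isometric embedding $J:E\to X$ for which the $\co$-norm is realized as a lattice expression, namely
\[
	\Big\|\sum_{j=1}^n \delta_j\otimes x_j\Big\| = \norm{\abs{Jx_1}\vee\cdots\vee\abs{Jx_n}}_X \qquad (x_1,\ldots,x_n\in E,\ n\in\naturals)\,.
\]
By the definition of the lattice multi-norm in Example~\ref{lattice multi-norm}, the right-hand side is exactly $\norm{(Jx_1,\ldots,Jx_n)}^L_n$. Combining the two displayed identities then yields $\norm{(x_1,\ldots,x_n)}_n=\norm{(Jx_1,\ldots,Jx_n)}^L_n$ for every $n\in\naturals$, which is the desired conclusion; that $J$ is isometric is the case $n=1$, since $\norm{\abs{Jx}}_X=\norm{Jx}_X$ in a Banach lattice.

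The hard part will be the dictionary between these languages rather than any fresh computation. Specifically, I would need to verify carefully that the $\co$-norm axiom --- contractivity of $T\otimes\id_E$ for $T\in\compactoperators(\co)$, equivalently for all $T\in\operators(\co)$ by the corollary above --- is literally Pisier's condition (P) as formulated in \cite{MN}, and that the conclusion of \cite[Th\'{e}or\`{e}me 2.1]{MN} is the lattice representation in the normalized form displayed above. This involves reconciling scalar conventions (real versus complex Banach lattices), checking that completeness of $E$ is what guarantees the embedding lands isometrically rather than merely isomorphically, and confirming the matching of the two suprema/suprema-of-moduli. Once this correspondence is pinned down, the result follows by citation.
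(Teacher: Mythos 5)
Your proposal matches the paper's own justification: the paper gives no separate proof, but the remark immediately preceding the theorem states precisely this argument — that a $\co$-norm is equivalent to a norm satisfying Pisier's condition (P) (via the corollary to Theorem \ref{multi-norm as tensor}), and that the result then follows from \cite[Th\'eor\`eme 2.1]{MN}. Your write-up is a correct and somewhat more detailed version of exactly that route.
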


\begin{proposition}\label{multi-bounded and weakly compact}
Let $E$ be a multi-normed space, and assume that  $\mboperators(\ell^1,E)$ consists of weakly compact operators.
Then every multi-bounded subset of $E$ is relatively weakly compact.
\end{proposition}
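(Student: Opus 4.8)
The plan is to reduce, via the Eberlein--\v{S}mulian theorem, the statement about an arbitrary (possibly non-separable) multi-bounded set to a statement about sequences, and then to attach to each such sequence a single multi-bounded operator from $\lone$ to $E$ to which the hypothesis can be applied. Throughout I take $\lone$ to carry its minimum multi-norm, so that Proposition \ref{multi-bounded operator from lone} is available; I also assume, as is standard in this context, that $E$ is complete, since otherwise weak compactness is not the natural notion and the series below need not converge in $E$.

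So let $B$ be a multi-bounded subset of $E$, with $\multibound(B)=c<\infty$. By the Eberlein--\v{S}mulian theorem it suffices to prove that $B$ is relatively weakly sequentially compact, i.e.\ that every sequence in $B$ has a weakly convergent subsequence. Fix a sequence $(x_k)_{k=1}^\infty$ in $B$. The set $\set{x_k:\ k\in\naturals}$ is a subset of $B$ and hence is multi-bounded, with multi-bound at most $c$; in particular $\sup_k\norm{x_k}\le c$. I would then define $T\in\operators(\lone,E)$ by
\[
	T((\alpha_k)_k)=\sum_{k=1}^\infty \alpha_k x_k\qquad((\alpha_k)_k\in\lone)\,,
\]
the series converging absolutely by completeness of $E$. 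Since $T(\delta_k)=x_k$, the set $\set{T(\delta_k):\ k\in\naturals}$ is multi-bounded, and so Proposition \ref{multi-bounded operator from lone} gives $T\in\mboperators(\lone,E)$.

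Now the hypothesis that $\mboperators(\lone,E)$ consists of weakly compact operators applies: $T$ is weakly compact, and so $T(\lone_{[1]})$ is relatively weakly compact in $E$. Since $x_k=T(\delta_k)\in T(\lone_{[1]})$ for every $k$, the sequence $(x_k)$ lies in the relatively weakly compact set $T(\lone_{[1]})$, and hence, by Eberlein--\v{S}mulian applied inside the weakly compact set $\overline{T(\lone_{[1]})}^{\,w}$, it has a weakly convergent subsequence. As $(x_k)$ was an arbitrary sequence in $B$, this shows that $B$ is relatively weakly sequentially compact, and therefore relatively weakly compact.

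The only point requiring real care---the main obstacle---is that $B$ is allowed to be arbitrary and need not sit inside the image $T(\lone_{[1]})$ of any single operator (indeed $B$ may be uncountable, so no one multi-bounded operator captures all of it). The Eberlein--\v{S}mulian theorem is exactly what lets me bypass this: it reduces relative weak compactness to a property of sequences, and each individual sequence $(x_k)$, being a countable multi-bounded set, is precisely the data $\set{T(\delta_k):\ k\in\naturals}$ determining a multi-bounded operator through Proposition \ref{multi-bounded operator from lone}. Everything else is a direct translation between the language of multi-bounded sets and that of multi-bounded operators out of $\lone$.
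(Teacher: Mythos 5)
Your proof is correct and follows essentially the same route as the paper: reduce to countable sets via Eberlein--\v{S}mulian, realise the sequence as $\set{T(\delta_k):k\in\naturals}$ for an operator $T\in\mboperators(\lone,E)$ via Proposition \ref{multi-bounded operator from lone}, and apply the hypothesis to conclude that $T(\lone_{[1]})$ is relatively weakly compact. Your explicit remarks on completeness of $E$ and on why the reduction to sequences is needed are just slightly more detailed versions of steps the paper leaves implicit.
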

\begin{proof}
By the Eberlein-\u{S}mulian theorem \cite[2.4.6]{Megginson}, we need to consider only  a countable multi-bounded subset 
$\set{x_n\colon n\in\naturals}$ of $E$. In this case, it then follows that the map $T:\delta_n\mapsto x_n$ extends to a bounded linear operator
 $T:\lone\to E$ which in fact belongs to $\mboperators(\lone,E)$. Thus, by the assumption, $T$ must be weakly compact. In particular,
 $\set{x_n\colon n\in\naturals}\subset T(\lone_{[1]})$ is relatively weakly compact.
\end{proof}

\section{Dual multi-normed spaces}
\label{Dual multi-normed spaces}

\noindent In this section, we shall quickly sketch how \emph{dual multi-normed spaces}
(see \cite[$\S 2.1.2$]{DP08})  fit into a tensor--norm framework.  

\begin{definition}
Let $E$ be a normed space. Then a norm $\norm{\cdot}$ on $\lone\otimes E$ is an \emph{$\lone$-norm} if $\norm{\delta_1\otimes x}=\norm{x}$ for each 
$x\in E$ and if the linear operator $T\otimes \id_E$ is bounded on $(\lone\otimes E,\|\cdot\|)$ with norm at
most $\|T\|$ for each $T\in\compactoperators(\lone)$.
\end{definition}

Note that, again, an $\lone$-norm on $\lone\otimes E$ is necessarily a reasonable cross-norm.

In fact, there is an analogue of Theorem \ref{multi-norm as tensor} that relates dual multi-norms based on a normed space $E$ to $\lone$-norms on 
$\lone\otimes E$. We shall not use this result, and so omit the details, but we note that the necessary preliminary results are contained in
 \cite[$\S2.3.2$]{DP08}, where the basic results on the relations between multi-norms and dual multi-norms are obtained in a different way.

Let $E$ be a multi-normed space, and consider its associated $\linfty$-norm $\|\cdot\|$ on $\linfty\otimes E$.  Since $\|\cdot\|\leq\|\cdot\|_\pi$,
there is a dense-range contraction $$(\linfty\otimes E,\norm{\cdot}_\pi)\rightarrow  (\linfty\otimes E,\norm{\cdot})\,.$$  The
dual of this map is an injective contraction 
\[
	\dual{(\linfty\otimes E)} \hookrightarrow \dual{(\linfty\projectivetensor E)}=\operators(\linfty,\dual{E})\,,
\] 
and so we can identify $\dual{(\linfty\otimes E)}$ with a
subspace of $\operators(\linfty,\dual{E})$; this subspace is denoted by $\operators_{\beta}(\linfty,\dual{E})$, where $\beta$
is the norm induced by $\dual{(\linfty\otimes E)}$.

The space $\lone\otimes \dual{E}$ acts linearly on $\linfty\otimes E$ by the specification
\[ 
	\duality{b\otimes x}{\,a\otimes \lambda} = \duality{b}{a} \duality{x}{\lambda}
\qquad (a\otimes\lambda\in\lone\otimes \dual{E},\ b\otimes x\in \linfty\otimes E)\,. 
\]
Since the $\linfty$-norm on $\linfty\otimes E$ is reasonable, we see that the action of $\lone\otimes \dual{E}$ on $\linfty\otimes E$ is continuous. 
Thus  we have a natural inclusion  $\lone\otimes\dual{E}\hookrightarrow \dual{(\linfty\otimes E)}$.

Now consider the $\co$-norm $\norm{\cdot}$ on $\co\otimes E$ that is associated with the given multi-norm based on $E$. Then the natural 
 inclusion $(\co\otimes E,\norm{\cdot})\hookrightarrow(\linfty\otimes E,\norm{\cdot})$ is isometric, and therefore we have a natural quotient 
mapping $\dual{(\linfty\otimes E)}\twoheadrightarrow\dual{(\co\otimes E)}$.
We note that $\lone\otimes E$ acts on $\co\otimes E$ by restricting its action on $\linfty\otimes E$ to $\co\otimes E$, that this new action also 
induces an inclusion $\lone\otimes\dual{E}\hookrightarrow \dual{(\co\otimes E)}$, and that the following diagram commutes:
\begin{align}\label{Eq: ell1_prop}
\SelectTips{eu}{12}\xymatrix{\lone\otimes \dual{E}\ \ar@{^(->}[r] \ar@{^(->}[dr] & \dual{(\linfty\otimes E)} \ar@{->>}[d] \\ 
            &\ \dual{(\co\otimes E)}\,.}
\end{align}

\begin{theorem}\label{ell1_prop}
Let $E$ be a multi-normed space.  Then 
there exists a unique $\lone$-norm on $\lone\otimes \dual{E}$ such that the two inclusions in the diagram \eqref{Eq: ell1_prop} are isometric. 
 Furthermore, the image of $\lone\otimes \dual{E}$ in $\dual{(\linfty\otimes E)}$ is a norming set for $\linfty\otimes E$.
\end{theorem}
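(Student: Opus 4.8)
The plan is to take the required norm on $\lone\otimes\dual{E}$ to be the one induced from $\dual{(\linfty\otimes E)}$, to prove that it coincides with the one induced from $\dual{(\co\otimes E)}$, and then to verify the remaining properties. Write $\beta(\tau)$ for the norm of $\tau\in\lone\otimes\dual{E}$ viewed in $\dual{(\linfty\otimes E)}$ and $\gamma(\tau)$ for its norm viewed in $\dual{(\co\otimes E)}$. Since the vertical map in \eqref{Eq: ell1_prop} is the quotient dual to the isometric inclusion $(\co\otimes E,\norm{\cdot})\hookrightarrow(\linfty\otimes E,\norm{\cdot})$, commutativity of \eqref{Eq: ell1_prop} gives $\gamma(\tau)\le\beta(\tau)$ immediately. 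The whole content of the first assertion is the reverse inequality $\beta(\tau)\le\gamma(\tau)$, and proving this is the main obstacle.

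To obtain it, I would use the norm-one coordinate projections $P_N\in\compactoperators(\linfty)$ onto the first $N$ coordinates. Given $\sigma\in\linfty\otimes E$ with $\norm{\sigma}\le 1$, the element $(P_N\otimes\id_E)(\sigma)$ lies in $\coo\otimes E\subset\co\otimes E$ and, by the defining property of the $\linfty$-norm, satisfies $\norm{(P_N\otimes\id_E)(\sigma)}\le\norm{P_N}\,\norm{\sigma}\le 1$. Writing $\tau=\sum_i a_i\otimes\lambda_i$ with $a_i\in\lone$, a direct computation gives $\duality{\sigma}{\tau}=\sum_i\duality{\sigma(a_i)}{\lambda_i}$ and $\duality{(P_N\otimes\id_E)(\sigma)}{\tau}=\sum_i\duality{\sigma(P_N a_i)}{\lambda_i}$, where $P_N a_i$ is the truncation of $a_i$; since $\norm{a_i-P_N a_i}_1\to0$ and $\sigma$ is bounded, these converge. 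Hence $\abs{\duality{\sigma}{\tau}}=\lim_N\abs{\duality{(P_N\otimes\id_E)(\sigma)}{\tau}}\le\gamma(\tau)$, and taking the supremum over $\sigma$ yields $\beta(\tau)\le\gamma(\tau)$. Thus $\beta=\gamma=:\norm{\cdot}$. Requiring either inclusion in \eqref{Eq: ell1_prop} to be isometric forces the norm to be exactly this common norm, which gives both the existence and the uniqueness in the statement.

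It remains to check that $\norm{\cdot}$ is an $\lone$-norm. For the normalisation $\norm{\delta_1\otimes\lambda}=\norm{\lambda}$ I would compute $\gamma(\delta_1\otimes\lambda)$ by pairing against $\delta_1\otimes x\in\co\otimes E$ (giving $\ge\norm{\lambda}$) and against a general $\rho\in\co\otimes E$ via $\duality{\rho}{\delta_1\otimes\lambda}=\duality{\rho(\delta_1)}{\lambda}$ together with $\norm{\rho(\delta_1)}\le\norm{\rho}$ (giving $\le\norm{\lambda}$). For the boundedness of $T\otimes\id_{\dual{E}}$ for $T\in\compactoperators(\lone)$, the key identity is $\duality{\sigma}{(T\otimes\id_{\dual{E}})\tau}=\duality{(\dual{T}\otimes\id_E)\sigma}{\tau}$, where $\dual{T}\in\compactoperators(\linfty)$ with $\norm{\dual{T}}=\norm{T}$; since $\norm{\cdot}=\beta$ and the $\linfty$-norm satisfies $\norm{(\dual{T}\otimes\id_E)\sigma}\le\norm{T}\,\norm{\sigma}$, this gives $\norm{(T\otimes\id_{\dual{E}})\tau}\le\norm{T}\,\norm{\tau}$, as required.

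For the norming assertion, I would first note that $\norm{\cdot}$, being an $\lone$-norm, is a reasonable cross-norm, so that $\coo\otimes\dual{E}$ is dense in $(\lone\otimes\dual{E},\norm{\cdot})$ (truncate each $a_i$); hence $q(\sigma):=\sup\set{\abs{\duality{\sigma}{\tau}}:\tau\in\lone\otimes\dual{E},\ \norm{\tau}\le1}$ may be computed over finitely supported $\tau$. For $\tau=\sum_{i=1}^N\delta_i\otimes\lambda_i$ one has $\duality{\sigma}{\tau}=\sum_{i=1}^N\duality{\sigma(\delta_i)}{\lambda_i}$, while $\norm{\tau}=\gamma(\tau)$ equals the norm of $(\lambda_1,\ldots,\lambda_N)$ in the dual of $(E^N,\norm{\cdot}_N)$ (using that truncation does not increase the level norms); so, by duality at level $N$, the supremum over such $\tau$ is $\norm{(\sigma(\delta_1),\ldots,\sigma(\delta_N))}_N$, and letting $N\to\infty$ gives $q(\sigma)=\multibound\set{\sigma(\delta_k):k\in\naturals}$. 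Finally I would identify this with $\norm{\sigma}$: under the identification $\linfty\otimes E=\finiterankoperators(\lone,E)$ used in Theorem \ref{multi-norm as tensor}, the $\linfty$-norm is the restriction of $\norm{\cdot}_{mb}$, and by Proposition \ref{multi-bounded operator from lone} the latter equals $\multibound\set{\sigma(\delta_k):k\in\naturals}$. Hence $q(\sigma)=\norm{\sigma}$, which is precisely the statement that $\lone\otimes\dual{E}$ is norming for $\linfty\otimes E$.
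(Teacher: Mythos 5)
Your proof is correct, and it splits naturally into a half that matches the paper and a half that does not. The existence/uniqueness part --- taking the norm induced from $\dual{(\linfty\otimes E)}$, observing that this is forced, and using the truncations $P_N$ to show that the norm induced from $\dual{(\co\otimes E)}$ agrees --- is essentially the paper's argument (the paper picks a near-optimal $\tau\in\linfty\otimes E$ with $\duality{\tau}{\sigma}>\norm{\sigma}-\varepsilon$ and truncates it, which is your computation with the two suprema interchanged); your explicit check of the $\lone$-norm axioms via the identity $\duality{\sigma}{(T\otimes\id_{\dual{E}})\tau}=\duality{(\dual{T}\otimes\id_E)\sigma}{\tau}$ supplies a detail the paper leaves to the reader. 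Where you genuinely diverge is the norming assertion. The paper applies the Hahn--Banach theorem to obtain a norming functional $T\in\operators_\beta(\linfty,\dual{E})$ and then uses the local structure of $\linfty$ (an almost-isometric embedding of the relevant finite-dimensional subspace into some $\linfty_N$, the injectivity of $\linfty$, and the lifting $R=\dual{U}$) to replace $T$ by an element $TSR\in\lone\otimes\dual{E}$ of norm at most $1+\varepsilon$ taking the same value at the given element. You instead test only against finitely supported functionals $\sum_{i=1}^N\delta_i\otimes\lambda_i$, identify their norm with the dual norm of $(\lambda_1,\ldots,\lambda_N)$ on the dual of $(E^N,\norm{\cdot}_N)$, and combine the level-$N$ duality with the identity $\norm{\sigma}=\multibound\set{\sigma(\delta_k):k\in\naturals}=\sup_N\norm{(\sigma(\delta_1),\ldots,\sigma(\delta_N))}_N$ --- legitimate, since axioms (A2) and (A3) make that sequence increasing and Proposition \ref{multi-bounded operator from lone} together with the construction in Theorem \ref{multi-norm as tensor} identifies its supremum with the $\linfty$-norm of $\sigma$. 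Your route avoids both the Hahn--Banach step on $\dual{(\linfty\otimes E)}$ and the $\linfty_N$-factorization, and it yields the slightly sharper conclusion that the finitely supported elements of $\lone\otimes\dual{E}$ already norm $\linfty\otimes E$; the paper's argument buys independence from the coordinate basis, recycling verbatim the local-structure lemma already established in the proof of Theorem \ref{multi-norm as tensor}.
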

\begin{proof}
The uniqueness statement is obvious, and the only way to define this $\lone$-norm on $\lone\otimes\dual{E}$ is by taking the restriction of the 
norm of $\dual{(\linfty\otimes E)}$. The fact that the norm on $\lone\otimes \dual{E}$ just defined is an $\lone$-norm follows directly from the 
properties of the $\linfty$-norm on $\linfty\otimes E$.

We shall now prove that the inclusion $\lone\otimes\dual{E}\hookrightarrow \dual{(\co\otimes E)}$ is also isometric. Thus consider 
$\sigma=\sum_{i=1}^m a_i\otimes \lambda_i\in\lone\otimes \dual{E}$, and take $\varepsilon>0$. By the definition of the norm on $\lone\otimes\dual{E}$, 
we see that there exists $\tau=\sum_{j=1}^n b_j\otimes x_j\in\linfty\otimes E$ with $\norm{\tau}=1$ and such that
 $\duality{\tau}{\,\sigma}>\norm{\sigma}-\varepsilon$. For each $k\in\naturals$, let $P_k\in\operators(\linfty)$ be the projection onto the first 
$k$ coordinates. Then it is clear that 
\[
	\norm{\sigma}-\varepsilon<\duality{\tau}{\,\sigma} = 
\sum_{i=1}^m \sum_{j=1}^n \duality{a_i}{b_j}\duality{x_j}{\lambda_i}=\lim_{k\to\infty}\duality{(P_k\otimes\id_E)(\tau)}{\,\sigma}\,;
\]
further, we note that $(P_k\otimes\id_E)(\tau)\in\co\otimes E$ with $\norm{(P_k\otimes\id_E)(\tau)}\le \norm{\tau}=1$. This proves that the  inclusion map $\lone\otimes\dual{E}\hookrightarrow \dual{(\co\otimes E)}$ is isometric.

For the last statement, take $\tau=\sum_{i=1}^n a_i\otimes x_i\in \linfty\otimes E$. By the Hahn--Banach theorem, there exists  $T\in\operators_{\beta}(\linfty,\dual{E})$
with $\beta(T)=1$ and $\duality{\tau}{T} = \norm{\tau}$.  Let $F$ be the linear span of the set $\set{a_1,\ldots,a_n}$ in $\linfty$, and take $\varepsilon>0$. Arguing as in the proof of Theorem \ref{multi-norm as tensor}, we see that there exist bounded linear operators $R,S:\linfty\to\linfty$ such that the range of $R$ is contained in $\linfty_N$ for some natural number $N$, such that $\norm{S}\norm{R}<1+\varepsilon$, and such that $(SR)|F=\id_F$; moreover, we can arrange that $R=\dual{U}$ for some $U\in\operators(\lone)$. Then $TSP_N$ belongs to $\lone\otimes \dual{E}$ (considered canonically as a subspace of $\finiterankoperators(\linfty,\dual{E})\subset\operators_\beta(\linfty,\dual{E})$), and so
\[
	TSR=TSP_N\dual{U}\in \lone\otimes\dual{E}\,.
\]
We see that $\beta(TSR)\le\beta(T)\norm{SR}<1+\varepsilon$ and that
\[
	\duality{\tau}{TSR}=\duality{\tau}{T}=\norm{\tau}\,.
\]
This holds true for each $\varepsilon>0$, and so $\lone\otimes\dual{E}$ is a norming set for $\linfty\otimes E$, as claimed.
\end{proof}

Analogously, when $E$ is a dual multi-normed space, we see that 
$\linfty\otimes \dual{E}$ acts on $\lone\otimes E$ by an action which satisfies the condition that
\[ 
 \duality{b\otimes x}{\,a\otimes \lambda} = \duality{a}{b} \duality{\lambda}{x}
\quad (a\otimes\lambda\in \linfty\otimes \dual{E},\ b\otimes x\in\lone\otimes E)\,. 
\]
The proof of the following theorem is essentially the same as that of Theorem \ref{ell1_prop}.

\begin{theorem}
Let $E$ be a dual multi-normed space.  Then the above dual pairing gives an injection
$\linfty\otimes \dual{E}\rightarrow \dual{(\lone\otimes E)}$ which induces a multi-norm
structure on $\dual{E}$.  Furthermore, $\co\otimes \dual{E}$ is a norming set for $\lone\otimes E$. \enproof
\end{theorem}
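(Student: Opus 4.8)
The plan is to mirror the proof of Theorem~\ref{ell1_prop}, interchanging the roles of $\lone$ and $\linfty$. By the dual analogue of Theorem~\ref{multi-norm as tensor} (the necessary preliminaries are in \cite[\S2.3.2]{DP08}), the dual multi-norm on $E$ is encoded by an $\lone$-norm $\norm{\cdot}$ on $\lone\otimes E$, and, as noted in the text, this $\lone$-norm is a reasonable cross-norm. Hence, for $a\in\linfty=\dual{\lone}$ and $\lambda\in\dual{E}$, the functional $a\otimes\lambda$ on $(\lone\otimes E,\norm{\cdot})$ has norm $\norm{a}\norm{\lambda}$, so the dual pairing does define a linear map $\Phi\colon\linfty\otimes\dual{E}\to\dual{(\lone\otimes E)}$. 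For injectivity, take $\sigma=\sum_i a_i\otimes\lambda_i$ with the $\lambda_i$ linearly independent and suppose that $\Phi(\sigma)=0$; pairing $\sigma$ against $b\otimes x$ and letting $x$ range over $E$ forces $\sum_i\duality{a_i}{b}\lambda_i=0$ in $\dual{E}$, hence $\duality{a_i}{b}=0$ for each $i$, and as this holds for every $b\in\lone$ we conclude that each $a_i=0$.

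Next I would give $\linfty\otimes\dual{E}$ the norm obtained by restricting the norm of $\dual{(\lone\otimes E)}$ along $\Phi$, and check that it is an $\linfty$-norm. The normalization $\norm{\delta_1\otimes\lambda}=\norm{\lambda}$ is immediate from the reasonableness of the $\lone$-norm. For the operator condition, observe that if $S\in\operators(\lone)$ then $S\otimes\id_E$ has norm at most $\norm{S}$ on $(\lone\otimes E,\norm{\cdot})$ (the defining property extends from $\compactoperators(\lone)$ to $\operators(\lone)$ just as in the Corollary following Theorem~\ref{multi-norm as tensor}), and the Banach-space dual of $S\otimes\id_E$ restricts on $\linfty\otimes\dual{E}$ to $\dual{S}\otimes\id_{\dual{E}}$, so that $\norm{\dual{S}\otimes\id_{\dual{E}}}\le\norm{S}=\norm{\dual{S}}$. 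To obtain the bound for an arbitrary $T\in\compactoperators(\linfty)$ applied to a fixed finite tensor $\sigma=\sum_i a_i\otimes\lambda_i$, I would localize $T$ to the span $F$ of $\set{a_1,\ldots,a_n}$ in $\linfty$ and, using the projectivity of $\lone$ together with the injectivity of $\linfty$, produce $S\in\operators(\lone)$ with $\dual{S}=T$ on $F$ and $\norm{S}\le\norm{T}+\varepsilon$, exactly as in the proof of Theorem~\ref{multi-norm as tensor}; since $\dual{S}$ and $T$ agree on $F$, we get $(T\otimes\id_{\dual{E}})(\sigma)=(\dual{S}\otimes\id_{\dual{E}})(\sigma)$ and the desired estimate on letting $\varepsilon\searrow 0$. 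Once the restricted norm is known to be an $\linfty$-norm, the equivalence of clauses~(i) and~(iv) of Theorem~\ref{multi-norm as tensor}, applied with $\dual{E}$ in place of $E$, produces the induced multi-norm on $\dual{E}$. This localization step is the only point that is not a routine transcription, and so is where I expect the real content to lie: duality alone yields the operator bound only for maps of the form $\dual{S}$, and passing to all of $\compactoperators(\linfty)$ needs the projectivity argument.

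Finally I would show that $\co\otimes\dual{E}$ is a norming set for $\lone\otimes E$. Fix $\tau=\sum_j b_j\otimes x_j\in\lone\otimes E$, and let $P_k\in\compactoperators(\lone)$ be the projection onto the first $k$ coordinates, so that $\norm{P_k}=1$. By the Hahn--Banach theorem choose $\Psi\in\dual{(\lone\otimes E)}$ with $\norm{\Psi}=1$ and $\duality{\tau}{\Psi}=\norm{\tau}$, and regard $\Psi$ as an operator $\lone\to\dual{E}$. Then $\Psi P_k=\sum_{n=1}^k\delta_n\otimes\Psi(\delta_n)$ belongs to $\co\otimes\dual{E}$, and for every $\tau'\in(\lone\otimes E)_{[1]}$ we have $\duality{\tau'}{\Psi P_k}=\duality{(P_k\otimes\id_E)\tau'}{\Psi}$, whence $\norm{\Psi P_k}\le\norm{\Psi}\sup_{\tau'}\norm{(P_k\otimes\id_E)\tau'}\le 1$ by the $\lone$-norm property applied to $P_k\in\compactoperators(\lone)$. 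Moreover $\duality{\tau}{\Psi P_k}=\duality{(P_k\otimes\id_E)\tau}{\Psi}$, and $(P_k\otimes\id_E)\tau\to\tau$ in the $\lone$-norm because that norm is a reasonable cross-norm and $(\id_{\lone}-P_k)b_j\to0$ in $\lone$ for each $j$; hence $\duality{\tau}{\Psi P_k}\to\norm{\tau}$. Since the reverse inequality is trivial, the unit ball of $\co\otimes\dual{E}$ attains $\norm{\tau}$ in the limit, and so $\co\otimes\dual{E}$ is norming, as required.
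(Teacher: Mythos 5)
Your proposal is correct and follows essentially the same route as the paper, which itself only remarks that the proof is ``essentially the same as that of Theorem \ref{ell1_prop}''; your write-up is a faithful transposition of that argument, with the localization via the projectivity of $\lone$ and the injectivity of $\linfty$ carrying the real content, exactly as you identify. The one genuine (and legitimate) simplification is in the norming-set step: because $P_k\to\id$ strongly on $\lone$, the truncations $\Psi P_k$ of the Hahn--Banach functional already lie in $\co\otimes\dual{E}$ and converge on $\tau$, so you can avoid the heavier $TSP_N\dual{U}$ localization that Theorem \ref{ell1_prop} needs on the $\linfty$ side.
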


Suppose that $E$ is a multi-normed space. Then the above two theorems give us an
$\lone$-norm on $\lone\otimes \dual{E}$ and then a $\co$-norm on $\co\otimes \bidual{E}$.    
Thus we have the following (see also \cite[Theorem 2.34]{DP08}, where the proof  is given by a different argument).

\begin{corollary}
Let $((E^n,\norm{\cdot}_n):\ n\in\naturals)$ be a multi-normed space, and induce a dual multi-norm structure based on $\dual{E}$.
Using this, induce a multi-norm $(\norm{\cdot}_n'':\ n\in\naturals)$ based  on $\bidual{E}$.  Then, for each $n\in\naturals$, 
the restriction of $\norm{\cdot}_n''$ to the canonical image of $E^n$ in $(\bidual{E})^n$ is equal to $\norm{\cdot}_n$.  
\enproof
\end{corollary}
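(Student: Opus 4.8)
The plan is to unwind the two construction theorems (Theorem~\ref{ell1_prop} and its dual counterpart) and track an element of $E^n$ through the double-dualization process, reducing everything to the compatibility of the various dual pairings with the canonical embedding $\kappa_E:E\hookrightarrow\bidual{E}$. First I would fix $n\in\naturals$ and $(x_1,\ldots,x_n)\in E^n$, and recall that by construction $\norm{(x_1,\ldots,x_n)}_n=\norm{\sum_{j=1}^n\delta_j\otimes x_j}$, where the right-hand side is the $\co$-norm on $\co\otimes E$ associated with the given multi-norm. Likewise, the induced multi-norm $\norm{\cdot}_n''$ on $\bidual{E}$ satisfies $\norm{(\kappa_E x_1,\ldots,\kappa_E x_n)}_n''=\norm{\sum_{j=1}^n\delta_j\otimes\kappa_E x_j}$, where this norm is the $\co$-norm on $\co\otimes\bidual{E}$ obtained from the $\lone$-norm on $\lone\otimes\dual{E}$. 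So the entire claim reduces to showing that the element $\sum_j\delta_j\otimes x_j\in\co\otimes E$ and the element $\sum_j\delta_j\otimes\kappa_E x_j\in\co\otimes\bidual{E}$ have the same norm.

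Next I would make the dual pairings explicit. By the last statement of Theorem~\ref{ell1_prop}, the image of $\lone\otimes\dual{E}$ in $\dual{(\linfty\otimes E)}$ is a norming set for $\linfty\otimes E$, and since the inclusion $(\co\otimes E,\norm{\cdot})\hookrightarrow(\linfty\otimes E,\norm{\cdot})$ is isometric (as noted in the discussion preceding the theorem), the norm of $\sum_j\delta_j\otimes x_j$ can be computed by testing against elements $\sigma=\sum_i a_i\otimes\lambda_i\in\lone\otimes\dual{E}$ with the induced $\lone$-norm at most $1$. On the other side, the $\co$-norm on $\co\otimes\bidual{E}$ coming from the dual multi-norm on $\dual{E}$ is computed (again via the norming statement of the second construction theorem, applied to $\dual{E}$ as a dual multi-normed space) by testing $\sum_j\delta_j\otimes\kappa_E x_j$ against the same set $\lone\otimes\dual{E}$. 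The key computation is that, under the canonical pairing,
\[
\duality{\sum_j\delta_j\otimes\kappa_E x_j}{\,\sum_i a_i\otimes\lambda_i}
=\sum_{i,j}\duality{\delta_j}{a_i}\,\duality{\kappa_E x_j}{\lambda_i}
=\sum_{i,j}\duality{\delta_j}{a_i}\,\duality{x_j}{\lambda_i}
=\duality{\sum_j\delta_j\otimes x_j}{\,\sum_i a_i\otimes\lambda_i}\,,
\]
using only $\duality{\kappa_E x}{\lambda}=\duality{x}{\lambda}$. Thus both elements pair identically with every element of the common test space $\lone\otimes\dual{E}$, carrying the same induced norm.

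Since both norms are defined as suprema of these (identical) pairings over the \emph{same} norming set with the \emph{same} constraint, they coincide; hence $\norm{(\kappa_E x_1,\ldots,\kappa_E x_n)}_n''=\norm{(x_1,\ldots,x_n)}_n$, which is the assertion. I expect the main obstacle to be bookkeeping: one must verify that the $\lone$-norm placed on $\lone\otimes\dual{E}$ by Theorem~\ref{ell1_prop} (as a restriction of the $\dual{(\linfty\otimes E)}$-norm) is exactly the $\lone$-norm that governs the second dualization step, so that the two suprema genuinely range over the \emph{same} constrained set of test elements and not merely over comparable ones. Once that identification of the intermediate $\lone$-norm is pinned down—which is precisely what the uniqueness clause of Theorem~\ref{ell1_prop} and the commuting diagram~\eqref{Eq: ell1_prop} guarantee—the pairing computation is immediate and the equality of norms follows at once.
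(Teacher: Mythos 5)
Your proof is correct and follows essentially the route the paper intends: the corollary is presented as an immediate consequence of Theorem~\ref{ell1_prop} and its dual counterpart, and your computation—pairing $\sum_j\delta_j\otimes x_j$ and $\sum_j\delta_j\otimes\kappa_E x_j$ against the common test set $(\lone\otimes\dual{E})_{[1]}$ and noting the pairings agree—is exactly the intended unwinding of the two constructions. One small attribution slip worth noting: the fact that the norm of $\sum_j\delta_j\otimes\kappa_E x_j$ is the supremum of its pairings against $(\lone\otimes\dual{E})_{[1]}$ is the \emph{definition} of the induced norm (the isometry of the injection $\linfty\otimes\bidual{E}\to\dual{(\lone\otimes\dual{E})}$), not the ``norming set'' clause of the second theorem, which runs in the opposite direction; the norming clause of Theorem~\ref{ell1_prop} is the one you genuinely need, and you invoke it correctly on the $\co\otimes E$ side.
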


\section{The $(p,q)$-multi-norm}

\noindent Following \cite[$\S 4.1$]{DP08}, we now introduce an important class of multi-norms.  Let $E$ be a normed space, and take $p,q$ with $1\leq p,q< \infty$. For each $n \in \naturals$ and each $\tuple{x}=(x_1,\ldots, x_n) \in E^n$, we define
\[
\norm{\tuple{x}}^{(p,q)}_n=\sup \set{ \lp\sum_{i=1}^n\abs{\duality{x_i}{\lambda_i}}^{q}\rp^{1/q}: \tuple{\lambda}=(\lambda_1,\ldots,\lambda_n) \in (\dual{E})^n,\, \mu_{p,n}(\tuple{\lambda})\leq 1}\,.
\]
It is clear that $\norm{\cdot}^{(p,q)}_n$ is a norm on $E^n$. As proved in \cite[Theorem 4.1]{DP08}, in the case where $1\le p\le q<\infty$, the sequence $(\norm{\cdot}^{(p,q)}_n: n \in \naturals)$ is a multi-norm based on $E$.

\begin{definition}
Let $E$ be a normed space, and take $p,q$ with $1\leq p\leq q< \infty$. Then the multi-norm $(\norm{\cdot}^{(p,q)}_n: n \in \naturals)$ described above is the \emph{$(p,q)$-multi-norm} over $E$.

A subset of $E$ is \emph{$(p,q)$-multi-bounded} if it is multi-bounded with respect to the $(p,q)$-multi-norm. The \emph{$(p,q)$-multi-bound} of such a set $B$ is denoted by $\multibound_{p,q}(B)$.
\end{definition}

\begin{remark}\label{(1,1) multi-norm is maximum}
By \cite[Theorem 4.6]{DP08} ({\it cf.} \eqref{max-mn}), the $(1,1)$-multi-norm is just the maximum multi-norm based on $E$.
\end{remark}

\begin{lemma}\label{weak (p,q)-multi-norm on dual space}
Let $E$ be a normed space, and take $p,q$ with $1\leq p,q< \infty$. Then, for each $n \in \naturals$ and $\tuple{\lambda}=(\lambda_1,\ldots,\lambda_n)\in (\dual{E})^n$, we have
\begin{align*}
\norm{\tuple{\lambda}}^{(p,q)}_n
=\sup \set{ \lp\sum_{i=1}^n\abs{\duality{x_i}{\lambda_i}}^{q} \rp^{1/q}: \tuple{x}=(x_1,\ldots,x_n) \in E^n,\, \mu_{p,n}(\tuple{x})\leq 1}\,.
\end{align*}
\end{lemma}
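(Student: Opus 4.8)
The plan is to prove the two inequalities separately. By definition (applying the construction with $\dual{E}$ as base space), the left-hand side $\norm{\tuple\lambda}^{(p,q)}_n$ is the supremum of $\lp\sum_{i=1}^n\abs{\duality{\lambda_i}{\Lambda_i}}^q\rp^{1/q}$ over $\tuple\Lambda=(\Lambda_1,\dots,\Lambda_n)\in(\bidual{E})^n$ with $\mu_{p,n}(\tuple\Lambda)\le 1$, whereas the right-hand side, call it $R$, restricts the supremum to the canonical image of $E^n$. I identify each $\tuple x\in E^n$ with its image in $(\bidual{E})^n$; by \eqref{weak p-summing norm on dual space} applied with $\dual{E}$ in place of $E$, the quantity $\mu_{p,n}$ may be computed on $(\bidual{E})^n$ by testing against $\dual{E}_{[1]}$, so $\mu_{p,n}(\tuple x)$ agrees whether computed in $E$ or in $\bidual{E}$. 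Hence $\set{\tuple x\in E^n:\mu_{p,n}(\tuple x)\le 1}$ sits inside $\set{\tuple\Lambda\in(\bidual{E})^n:\mu_{p,n}(\tuple\Lambda)\le 1}$, and the inequality $R\le\norm{\tuple\lambda}^{(p,q)}_n$ is immediate.

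For the reverse inequality I would first reformulate $\mu_{p,n}$ as an operator norm: for $\tuple\Lambda\in(\bidual{E})^n$ the map $v\colon\ell^{p'}_n\to\bidual{E}$, $\delta_i\mapsto\Lambda_i$, satisfies $\norm{v}=\mu_{p,n}(\tuple\Lambda)$ (unwind the definition using the duality between $\ell^{p'}_n$ and $\ell^p_n$ together with the description of $\mu_{p,n}$ on $(\bidual{E})^n$ via $\dual{E}_{[1]}$), and similarly for $\tuple x\in E^n$. Now fix $\tuple\Lambda$ with $\mu_{p,n}(\tuple\Lambda)\le 1$ and $\varepsilon>0$, and apply the principle of local reflexivity to the finite-dimensional subspaces $F=\operatorname{span}\set{\Lambda_1,\dots,\Lambda_n}\subseteq\bidual{E}$ and $G=\operatorname{span}\set{\lambda_1,\dots,\lambda_n}\subseteq\dual{E}$. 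This produces an operator $T\colon F\to E$ with $\norm{T}\le 1+\varepsilon$ and $\duality{T\Lambda_i}{\lambda_j}=\duality{\lambda_j}{\Lambda_i}$ for all $i,j$. Setting $x_i=T\Lambda_i\in E$, the tuple $\tuple x$ is represented by $T\circ v\colon\ell^{p'}_n\to E$, whence $\mu_{p,n}(\tuple x)=\norm{T\circ v}\le(1+\varepsilon)\mu_{p,n}(\tuple\Lambda)\le 1+\varepsilon$, while $\duality{x_i}{\lambda_i}=\duality{\lambda_i}{\Lambda_i}$ exactly. Replacing $\tuple x$ by $\tuple x/(1+\varepsilon)$ and letting $\varepsilon\searrow 0$ gives $R\ge\lp\sum_{i=1}^n\abs{\duality{\lambda_i}{\Lambda_i}}^q\rp^{1/q}$, and taking the supremum over admissible $\tuple\Lambda$ yields $R\ge\norm{\tuple\lambda}^{(p,q)}_n$.

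The main obstacle is precisely this reverse inequality: one cannot simply mimic the proof of \eqref{weak p-summing norm on dual space}. There the supremum runs over a \emph{single} functional, so weak$^*$-density of $E_{[1]}$ in $\bidual{E}_{[1]}$ and weak$^*$-continuity of one evaluation suffice. Here the constraint $\mu_{p,n}(\tuple\Lambda)\le 1$ couples all $n$ coordinates, and $\mu_{p,n}$ is only weak$^*$-lower-semicontinuous, so approximating the $\Lambda_i$ coordinatewise by a Goldstine net can increase the weak $p$-summing norm uncontrollably. The principle of local reflexivity is exactly the device that approximates the whole finite configuration $(\Lambda_i;\lambda_j)$ simultaneously while keeping the relevant operator norm under control.

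If one wishes to avoid local reflexivity, the same conclusion follows from a tensor-norm/bipolar argument. Under $\tuple x\leftrightarrow\sum_i\delta_i\otimes x_i$ the norm $\mu_{p,n}$ is the injective tensor norm on $\ell^p_n\otimes E$, whose dual norm on $\ell^{p'}_n\otimes\dual{E}$ is the projective tensor norm; since $\dual{E}$ is a dual space it is $1$-complemented in its bidual $\bidual{(\dual{E})}$, so this projective norm is unchanged under $\dual{E}\hookrightarrow\bidual{(\dual{E})}$. The bipolar theorem then shows that $\set{\tuple x\in E^n:\mu_{p,n}(\tuple x)\le 1}$ is weak$^*$-dense in $\set{\tuple\Lambda\in(\bidual{E})^n:\mu_{p,n}(\tuple\Lambda)\le 1}$, and applying the weak$^*$-continuous map $\tuple\Lambda\mapsto(\duality{\lambda_i}{\Lambda_i})_i$ into $\complexs^n$ recovers the equality of the two suprema. (If $E$ is not complete, one first passes to the completion, noting that $\dual{E}$ and $\bidual{E}$ are unchanged and that both sides are norm-continuous in $\tuple x$, so elements of the completion may be replaced by nearby elements of $E$.)
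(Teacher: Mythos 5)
Your argument is correct and follows exactly the route the paper indicates: the paper's proof of this lemma is simply a citation of \cite[Proposition 4.10]{DP08} together with the remark that it follows from the Principle of Local Reflexivity, and your main argument is a correct implementation of precisely that idea (with the easy inequality handled via \eqref{weak p-summing norm on dual space} and the hard one via a local-reflexivity operator applied to $\operatorname{span}\set{\Lambda_1,\ldots,\Lambda_n}$ against $\operatorname{span}\set{\lambda_1,\ldots,\lambda_n}$). Your supplementary tensor-norm/bipolar sketch is a sound alternative, but the primary proof already matches the paper's intended approach.
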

\begin{proof}
This is proved in \cite[Proposition 4.10]{DP08}; it follows from the Principal of Local Reflexivity.
\end{proof}

This lemma implies that, for each normed space $E$, the $(p,q)$-multi-norm based  on $E$ is the same as the one induced from the $(p,q)$-multi-norm based   on $\bidual{E}$. 
 
Suppose now that $E$ is a normed space and that $p,q$ satisfy $1\le p\le q<\infty$. 

\begin{definition}
We denote by $\operators_{p,q}(\lone, E)$ the subset of $\operators(\lone,E)$ consisting of those operators $T$ with the property that $\set{T(\delta_k)\colon k\in\naturals}$ is $(p,q)$-multi-bounded in $E$. We define the norm on $\operators_{p,q}(\lone, E)$ by
\[
	\alpha_{p,q}(T):=\multibound_{p,q}\set{T(\delta_k)\colon k\in\naturals}\,.
\]
\end{definition}

By Proposition \ref{multi-bounded operator from lone}, we see that $\operators_{p,q}(\lone, E)=\mboperators(\lone,E)$ 
when $\lone$ is given the minimum multi-norm and $E$ is given the $(p,q)$-multi-norm; moreover,
\[
	\alpha_{p,q}(T)=\norm{T}_{mb}\quad (T\in \operators_{p,q}(\lone,E))\,.
\]
In particular, it follows that 
\[
	\finiterankoperators(\lone,E)\subset \operators_{p,q}(\lone, E)\subset \operators(\lone, E)\,,
\] 
and that, indeed, $(\operators_{p,q}(\lone, E), \alpha_{p,q})$ is a normed space; it is a Banach space when $E$ is a Banach space. 

From the discussion above, we see also that the natural injection from 
 $\linfty\otimes E$ into $(\operators_{p,q}(\lone, E), \alpha_{p,q})$ is  isometric with respect to the  $\linfty$-norm on $\linfty\otimes E$ associated with the $(p,q)$-multi-norm.
 
Recall from \cite[Chapter 10]{DJT} 
that an operator $T$ from a normed space $E$ into another normed space $F$ is  \emph{$(q,p)$-summing} if there exists a constant $C$ such that
\[
	\left(\sum_{i=1}^n\norm{Tx_i}^q\right)^{1/q}\le C\,\mu_{p,n}(x_1,\ldots, x_n)\quad (x_1,\ldots, x_n\in E,\ n\in\naturals)\,.
\]
The smallest such constant $C$ is denoted by $\pi_{q,p}(T)$. The set of $(q,p)$-summing operators, denoted by $\Pi_{q,p}(E,F)$,  is a normed space when equipped with the norm $\pi_{q,p}$\,; it is a Banach space when $E$ and $F$ are Banach spaces. When $p=q$, we shall write $\operators_p$, $\Pi_p$, and $\pi_p$ instead of $\operators_{p,p}$, $\Pi_{p,p}$, and $\pi_{p,p}$, respectively. The space $(\Pi_p,\pi_p)$ consisting of all $p$-summing operators has been studied by many authors; see \cite{DF}, \cite{DJT}, and \cite{Jameson}, for example.

\begin{proposition}\label{connection with (q,p)-summing operators} 
Let $E$ be normed space, and take $p,q$ with $1\le p\le q<\infty$. Suppose that $T\in\operators(\lone,E)$. Then $T\in\operators_{p,q}(\lone,E)$ if and only if $\dual{T}\in\Pi_{q,p}(\dual{E},\linfty)$. In this case, we have $\alpha_{p,q}(T)=\pi_{q,p}(\dual{T})$.
\end{proposition}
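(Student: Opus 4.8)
The plan is to prove the equivalence and the norm identity in one stroke, by showing that $\alpha_{p,q}(T)$ and $\pi_{q,p}(\dual{T})$ are one and the same supremum written in two different ways; allowing both to take the value $+\infty$, the finiteness of one side is then automatically equivalent to the finiteness of the other. First I would unwind the left-hand quantity straight from the definitions. Writing $x_k=T(\delta_k)$, we have $\alpha_{p,q}(T)=\multibound_{p,q}\set{x_k:k\in\naturals}$, and expanding the $(p,q)$-multi-bound and then the $(p,q)$-multi-norm gives
\[
\alpha_{p,q}(T)=\sup_n\,\sup_{k_1,\ldots,k_n}\,\sup_{\mu_{p,n}(\tuple{\lambda})\le 1}\lp\sum_{i=1}^n\abs{\duality{x_{k_i}}{\lambda_i}}^q\rp^{1/q}\,,
\]
where the innermost supremum is over $\tuple{\lambda}=(\lambda_1,\ldots,\lambda_n)\in(\dual{E})^n$.

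Next I would rewrite the right-hand quantity. The key observation is that, under the identification $\dual{(\lone)}=\linfty$, the functional $\dual{T}\lambda$ is precisely the bounded sequence $(\duality{x_k}{\lambda})_k$, so that $\norm{\dual{T}\lambda}_{\linfty}=\sup_k\abs{\duality{x_k}{\lambda}}$. By the homogeneity of the inequality defining a $(q,p)$-summing operator, this yields
\[
\pi_{q,p}(\dual{T})=\sup_n\,\sup_{\mu_{p,n}(\tuple{\lambda})\le 1}\lp\sum_{i=1}^n\lp\sup_k\abs{\duality{x_k}{\lambda_i}}\rp^q\rp^{1/q}\,.
\]
At this point I must check that the weak $p$-summing norm $\mu_{p,n}$ occurring in the definition of a $(q,p)$-summing operator on the domain $\dual{E}$ (a priori computed over $\bidual{E}_{[1]}$) agrees with the $\mu_{p,n}$ used in the $(p,q)$-multi-norm; this is exactly the content of \eqref{weak p-summing norm on dual space}, which follows from the weak$^*$-density of $E_{[1]}$ in $\bidual{E}_{[1]}$.

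Finally I would reconcile the two displays. In the formula for $\alpha_{p,q}(T)$ every supremum is of the same non-negative quantity, so I may freely interchange the supremum over the indices $k_1,\ldots,k_n$ with the supremum over $\tuple{\lambda}$. For fixed $\tuple{\lambda}$, the $i$-th summand $\abs{\duality{x_{k_i}}{\lambda_i}}^q$ depends only on $k_i$; since these are independent choices and $t\mapsto t^q$ is increasing on $[0,\infty)$, the supremum of the sum equals the sum of the suprema, that is,
\[
\sup_{k_1,\ldots,k_n}\sum_{i=1}^n\abs{\duality{x_{k_i}}{\lambda_i}}^q=\sum_{i=1}^n\lp\sup_k\abs{\duality{x_k}{\lambda_i}}\rp^q\,.
\]
Substituting this collapses the first display into the second, giving $\alpha_{p,q}(T)=\pi_{q,p}(\dual{T})$ as extended reals, and hence both halves of the statement at once. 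The computation is short, so there is no single serious obstacle; the only steps that genuinely need care are this decoupling of the index suprema and the verification that the two instances of $\mu_{p,n}$ on $(\dual{E})^n$ really coincide.
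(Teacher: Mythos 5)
Your proof is correct and follows essentially the same route as the paper's: both identify $\alpha_{p,q}(T)$ and $\pi_{q,p}(\dual{T})$ as the same supremum, the paper by taking the supremum over $a_1,\ldots,a_n\in\lone_{[1]}$ to turn $\abs{\duality{Ta_i}{\lambda_i}}$ into $\norm{\dual{T}\lambda_i}$, and you by the equivalent device of decoupling the suprema over the indices $k_1,\ldots,k_n$ and using $\norm{\dual{T}\lambda}_{\linfty}=\sup_k\abs{\duality{T\delta_k}{\lambda}}$.
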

\begin{proof}
Suppose that $T\in \operators_{p,q}(\lone,E)$. From the previous discussion,  $\alpha_{p,q}(T)=\multibound_{p,q}T(\lone_{[1]})$, and so $\alpha_{p,q}(T)$ is the smallest constant $C$ such that
\begin{align*}
C\,\mu_{p,n}(\lambda_1,\ldots,\lambda_n)\ge\left(\sum_{i=1}^n\abs{\duality{Ta_i}{\lambda_i}}^q\right)^{1/q}=\left(\sum_{i=1}^n\abs{\duality{a_i}{\dual{T}\lambda_i}}^q\right)^{1/q}\,
\end{align*}
for every $n\in\naturals$, $a_1,\ldots, a_n\in \lone_{[1]}$, and $\lambda_1$,\ldots, $\lambda_n\in\dual{E}$. 
Taking the supremum over all elements $a_1,\ldots, a_n\in \lone_{[1]}$, we see that $\alpha_{p,q}(T)$ is the smallest constant $C$ such that
\begin{align*}
C\,\mu_{p,n}(\lambda_1,\ldots,\lambda_n)\ge\left(\sum_{i=1}^n\norm{\dual{T}\lambda_i}^q\right)^{1/q}. 
\end{align*}
Thus $\dual{T}\in\Pi_{q,p}(\dual{E},\linfty)$ and $\pi_{q,p}(\dual{T})=\alpha_{p,q}(T)$. 

The converse follows in the same way.
\end{proof}

In the following result, we shall use the fact that $\Lone_\reals(\Omega)$ is an AL-space as a (real) Banach lattice, and so its dual space is an AM-space with an order-unit. Thus there is a compact space $K$ and a linear isometry $\theta:\dual{\Lone(\Omega)}\to\C(K)$ such that $\theta |{\dual{\Lone_\reals(\Omega)}}$ is an order-isomorphism from $\dual{\Lone_\reals(\Omega)}$ onto $\C_\reals(K)$: this is Kakutani's representation theorem. 
For these  results on Banach lattices, see \cite[\S 12]{AB}, for example.

\begin{theorem}\label{equivalence of (p,q)-multi-norm on Lone}
Let $(\Omega,\mu)$ be a measure space, and take $p,q,r$ with $1\le p<q<r<\infty$. Then the $(p,q)$-multi-norm is equivalent to the $(1,q)$-multi-norm 
based on $\Lone(\Omega)$ and dominates 
the $(r,r)$-multi-norm  based on $\Lone(\Omega)$. 
\end{theorem}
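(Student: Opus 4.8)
The plan is to pass to the dual side, where $\dual{\Lone(\Omega)}=\Linfty(\Omega)$ is, by Kakutani's theorem, isometrically a space $\C(K)$, and to deduce both assertions from the behaviour of absolutely summing operators whose domain is a $\C(K)$-space. A tuple $\tuple{f}=(f_1,\ldots,f_n)\in\Lone(\Omega)^n$ corresponds, under the isometric embedding of $\linfty\otimes\Lone(\Omega)$ into $(\operators_{p,q}(\lone,\Lone(\Omega)),\alpha_{p,q})$ recorded before the theorem together with \eqref{Eq: multi-norm as tensor 1}, to the operator $T\in\finiterankoperators(\lone,\Lone(\Omega))$ with $T\delta_i=f_i$; thus $\norm{\tuple{f}}^{(p,q)}_n=\alpha_{p,q}(T)$, and by Proposition \ref{connection with (q,p)-summing operators} this equals $\pi_{q,p}(\dual{T})$, where $\dual{T}\colon\Linfty(\Omega)\cong\C(K)\to\linfty$. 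In the same way $\norm{\tuple{f}}^{(1,q)}_n=\pi_{q,1}(\dual{T})$ and $\norm{\tuple{f}}^{(r,r)}_n=\pi_{r}(\dual{T})$. So the whole theorem becomes a set of inequalities between the summing norms $\pi_{q,p}$, $\pi_{q,1}$, and $\pi_{r}$ of an operator \emph{out of} a $\C(K)$-space.

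One half of the equivalence is free: since $\mu_{p,n}\le\mu_{1,n}$, every $(q,p)$-summing operator is $(q,1)$-summing with $\pi_{q,1}(\dual{T})\le\pi_{q,p}(\dual{T})$, so the $(p,q)$-multi-norm always dominates the $(1,q)$-multi-norm. For intuition about the reverse inequality I would record that Kakutani's representation makes the weak summing norm explicit, namely $\mu_{p,n}(\tuple{\lambda})=\flexiblenorm{\lp\sum_{i=1}^n\abs{\lambda_i}^p\rp^{1/p}}_\infty$ for $\tuple{\lambda}\in(\Linfty(\Omega))^n$ (all the suprema in \eqref{weak p-summing norm on dual space} becoming suprema over the points of $K$, hence freely interchangeable), which in turn yields explicit lattice formulae for the three multi-norms. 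These show why no elementary argument can succeed: a direct Hölder estimate only gives $\norm{\tuple{f}}^{(p,q)}_n\le n^{1/p'}\norm{\tuple{f}}^{(1,q)}_n$, the stray power of $n$ being an honest reflection of the gap between $\norm{\bigvee_i\abs{f_i}}_1$ and $\norm{\tuple{f}}^{(1,q)}_n$.

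The decisive, non-formal ingredient, and the step I expect to be the main obstacle, is the classical coincidence of mixing summing norms for operators on a $\C(K)$-space: for $1\le s\le q<\infty$ there is a constant $C_{s,q}$, depending only on $s$ and $q$, such that $\pi_{q}(u)\le C_{s,q}\,\pi_{q,s}(u)$ for every $u\in\operators(\C(K),Y)$, so that $\Pi_{q,s}(\C(K),Y)=\Pi_{q}(\C(K),Y)$ with equivalent norms \cite{DJT}. Applying this with $s=p$ and $s=1$ gives $\pi_{q,p}(\dual{T})\le C\,\pi_{q,1}(\dual{T})$, which is precisely the missing domination $\norm{\tuple{f}}^{(p,q)}_n\le C\,\norm{\tuple{f}}^{(1,q)}_n$ and completes the equivalence. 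For the final assertion I would combine the same coincidence (with $s=p$) with the elementary inclusion $\pi_{r}(u)\le\pi_{q}(u)$, valid because $q\le r$, to obtain
\[
\norm{\tuple{f}}^{(r,r)}_n=\pi_{r}(\dual{T})\le\pi_{q}(\dual{T})\le C_{p,q}\,\pi_{q,p}(\dual{T})=C_{p,q}\,\norm{\tuple{f}}^{(p,q)}_n\,,
\]
so that the $(p,q)$-multi-norm dominates the $(r,r)$-multi-norm. The whole argument thus turns on the $\C(K)$-coincidence $\Pi_{q,s}=\Pi_{q}$, whose proof rests on the order (lattice) structure of $\C(K)$ rather than on any pointwise inequality.
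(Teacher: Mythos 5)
Your reduction is exactly the one the paper uses: identify $\norm{\tuple{f}}^{(p,q)}_n$ with $\alpha_{p,q}(T)=\pi_{q,p}(\dual{T})$ for the finite-rank operator $T\delta_i=f_i$ via Proposition \ref{connection with (q,p)-summing operators}, use Kakutani's theorem to view $\dual{T}$ as an operator out of a $\C(K)$-space, and then quote the classical theory of summing operators on $\C(K)$-spaces; the elementary half $\pi_{q,1}\le\pi_{q,p}$ is also correct. The gap is in the ``decisive ingredient'': the claimed coincidence $\Pi_{q,s}(\C(K),Y)=\Pi_{q}(\C(K),Y)$ for $1\le s<q$, i.e.\ $\pi_q(u)\le C_{s,q}\,\pi_{q,s}(u)$ for all $u\in\operators(\C(K),Y)$, is not the classical theorem and is in fact false. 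The correct statement --- and the one the paper cites, \cite[Theorem 10.9]{DJT} --- is that the classes $\Pi_{q,p}(\C(K),Y)$ for $1\le p<q$ all coincide with $\Pi_{q,1}(\C(K),Y)$, with equivalent norms, and that this common class embeds continuously into $\Pi_r(\C(K),Y)$ only for $r$ \emph{strictly} greater than $q$; it does not collapse to $\Pi_q(\C(K),Y)$. If your version were true, the second assertion of \cite[Theorem 10.9]{DJT} would be vacuous and the present theorem could be stated with $r=q$; the paper's careful hypothesis $q<r$ reflects precisely the failure of your coincidence. (Genuine counterexamples exist: by Talagrand's work on cotype, for $q>2$ there are spaces $Y$ for which every operator $\C(K)\to Y$ is $(q,1)$-summing but not every one is $q$-summing, and a standard localization argument then also rules out a uniform inequality $\pi_q\le C\,\pi_{q,1}$ on finite-rank operators into $\linfty$, which is what your argument would need here.)

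Concretely, both of your chains pass through $\pi_q(\dual{T})$ and break at the same place: in $\pi_{q,p}\le\pi_q\le C\,\pi_{q,1}$ the second inequality fails, and in $\pi_r\le\pi_q\le C\,\pi_{q,p}$ the second inequality fails as well. The conclusions you want are nevertheless true; to repair the proof, replace the coincidence by the two assertions of \cite[Theorem 10.9]{DJT} used directly. The norm equivalence $\pi_{q,p}(\dual{T})\le C\,\pi_{q,1}(\dual{T})$ (with no detour through $\pi_q$) gives the equivalence of the $(p,q)$- and $(1,q)$-multi-norms, and the continuous inclusion $\Pi_{q,1}(\C(K),\linfty)\subset\Pi_r(\C(K),\linfty)$ for $r>q$ gives $\pi_r(\dual{T})\le C'\,\pi_{q,1}(\dual{T})\le C'\,\pi_{q,p}(\dual{T})$, hence the domination of the $(r,r)$-multi-norm. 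With that substitution your argument coincides with the paper's proof.
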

\begin{proof}
By \cite[Theorem 10.9]{DJT}, we have
\[
\Pi_{q,p}(\C(K),\linfty)=\Pi_{q,1}(\C(K),\linfty)\subset\Pi_{r}(\C(K),\linfty)
\]
for each compact space $K$, where the last inclusion is continuous. The conclusion then follows from Proposition \ref{connection with (q,p)-summing operators}.
\end{proof}

We shall consider the mutual equivalence of various $(p,q)$-multi-norms
and some other multi-norms based on certain Banach spaces in \cite{DDPR2}.

\begin{theorem}\label{(p,p)-multi-bounded and weak compactness} 
Let $E$ be a Banach space and take $p\in [1,\infty)$. Then every $(p,p)$-multi-bounded subset of $E$ is relatively weakly compact.
\end{theorem}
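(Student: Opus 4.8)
The plan is to reduce the statement to a question about a single operator and then invoke the characterization of $(p,p)$-multi-bounded sets in terms of $p$-summing operators established in Proposition \ref{connection with (q,p)-summing operators}. First I would apply Proposition \ref{multi-bounded and weakly compact}: to conclude that every $(p,p)$-multi-bounded subset of $E$ is relatively weakly compact, it suffices to show that every operator in $\mboperators(\lone,E)$ is weakly compact, where $E$ carries the $(p,p)$-multi-norm and $\lone$ carries its minimum multi-norm. By the identification recorded after the definition of $\operators_{p,q}(\lone,E)$, we have $\mboperators(\lone,E)=\operators_{p,p}(\lone,E)$ in this situation, so the task becomes: show that each $T\in\operators_{p,p}(\lone,E)$ is weakly compact.

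Next I would use Proposition \ref{connection with (q,p)-summing operators} (with $q=p$) to translate membership in $\operators_{p,p}(\lone,E)$ into a $p$-summing condition on the dual. Namely, $T\in\operators_{p,p}(\lone,E)$ if and only if $\dual{T}\in\Pi_p(\dual{E},\linfty)$. The key classical fact I would then invoke is that every $p$-summing operator is weakly compact (indeed completely continuous), a standard result from the theory of absolutely summing operators; this is exactly the kind of statement available in \cite{DJT}. Thus $\dual{T}$ is weakly compact. Finally, by Gantmacher's theorem, an operator is weakly compact if and only if its adjoint is weakly compact, so the weak compactness of $\dual{T}$ yields the weak compactness of $T$ itself. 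Chaining these implications gives that every $T\in\operators_{p,p}(\lone,E)$ is weakly compact, which is precisely the hypothesis needed to apply Proposition \ref{multi-bounded and weakly compact}.

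The main obstacle, such as it is, lies in cleanly quoting the fact that $p$-summing operators are weakly compact and in correctly handling the target space $\linfty$: here $\dual{T}$ maps into $\linfty$ rather than into a reflexive space, so one must be careful that the $p$-summing property alone delivers weak compactness without any additional hypothesis on the range. This is genuinely true (the factorization of a $p$-summing operator through a subspace of an $\Lspace^p$-space, via the Pietsch factorization theorem, forces weak compactness), but it is the one point where invoking the right black-box result matters. Once that fact is in hand, the remaining steps are purely formal applications of Gantmacher's theorem and of the two propositions already established in the excerpt, so I would expect the proof to be very short.
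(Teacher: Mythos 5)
Your proposal is correct and follows essentially the same route as the paper: reduce via Proposition \ref{multi-bounded and weakly compact} to showing each $T\in\mboperators(\lone,E)=\operators_p(\lone,E)$ is weakly compact, pass to $\dual{T}\in\Pi_p(\dual{E},\linfty)$ by Proposition \ref{connection with (q,p)-summing operators}, invoke the Pietsch factorization theorem to get weak compactness of $\dual{T}$, and return to $T$ by Gantmacher's theorem. Your explicit attention to the fact that the range $\linfty$ is non-reflexive, and that the $p$-summing property alone suffices, is a point the paper passes over silently but handles in exactly the same way.
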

\begin{proof}
Let $T\in\mboperators(\lone,E)=\operators_p(\lone,E)$. By Proposition \ref{connection with (q,p)-summing operators}, $\dual{T}\in \Pi_p(\dual{E},\linfty)$. 
By the Pietsch Factorization Theorem, every $p$-summing operator is weakly compact \cite[Theorem 2.17]{DJT}. It follows that $\dual{T}$ is weakly compact, and hence so is $T$. By Proposition \ref{multi-bounded and weakly compact}, every $(p,p)$-multi-bounded subset of $E$ must be relatively weakly compact.
\end{proof}

\begin{corollary}\label{multi-bounded and weak compactness in Lone}
Let $(\Omega,\mu)$ be a measure space, and take $p,q$ with $1\le p\le q<\infty$. Then every $(p,q)$-multi-bounded subset of $\Lone(\Omega)$ is relatively weakly compact. 
\end{corollary}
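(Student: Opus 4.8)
The goal is to prove Corollary~\ref{multi-bounded and weak compactness in Lone}: every $(p,q)$-multi-bounded subset of $\Lone(\Omega)$ is relatively weakly compact, for $1 \le p \le q < \infty$. The plan is to reduce the general case $p \le q$ to the diagonal case $p = q$, which is already handled by Theorem~\ref{(p,p)-multi-bounded and weak compactness}, and then invoke the comparison of multi-norms established in Theorem~\ref{equivalence of (p,q)-multi-norm on Lone}.

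First I would observe that the statement is about the $(p,q)$-multi-norm \emph{specifically on the space} $E = \Lone(\Omega)$, so the special structure of $\Lone(\Omega)$ as an $\Lspace$-space is available, and this is exactly what Theorem~\ref{equivalence of (p,q)-multi-norm on Lone} exploits. The key idea is \emph{domination}: if one multi-norm dominates another (in the sense defined just before the $(p,q)$-multi-norm definition, with a constant $C>0$ such that $\norm{\tuple{x}}^1_n \le C\norm{\tuple{x}}^2_n$), then every subset that is multi-bounded for the dominating multi-norm is automatically multi-bounded for the dominated one, with multi-bound scaled by $C$. So if I can find some $r$ with $p \le q < r < \infty$, or some diagonal $(r,r)$-multi-norm that is dominated by the $(p,q)$-multi-norm, then $(p,q)$-multi-bounded sets will be $(r,r)$-multi-bounded, and Theorem~\ref{(p,p)-multi-bounded and weak compactness} finishes the job.

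The main steps are then as follows. Given a $(p,q)$-multi-bounded subset $B$ of $\Lone(\Omega)$, I first dispose of the edge case $p = q$: here $B$ is $(p,p)$-multi-bounded, and Theorem~\ref{(p,p)-multi-bounded and weak compactness} applies directly. For the case $p < q$, I choose any $r$ with $q < r < \infty$, so that $1 \le p < q < r < \infty$ and the hypotheses of Theorem~\ref{equivalence of (p,q)-multi-norm on Lone} are met. That theorem tells me the $(p,q)$-multi-norm based on $\Lone(\Omega)$ dominates the $(r,r)$-multi-norm based on $\Lone(\Omega)$; hence there is a constant $C > 0$ with $\norm{\tuple{x}}^{(r,r)}_n \le C \,\norm{\tuple{x}}^{(p,q)}_n$ for all $n$ and all $\tuple{x} \in \Lone(\Omega)^n$. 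Taking suprema over $x_1,\ldots,x_n \in B$ and over $n \in \naturals$ gives $\multibound_{r,r}(B) \le C\,\multibound_{p,q}(B) < \infty$, so $B$ is $(r,r)$-multi-bounded. Finally, Theorem~\ref{(p,p)-multi-bounded and weak compactness} (with $p$ there taken to be our $r$) yields that $B$ is relatively weakly compact.

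I expect the argument to be short, so the only real obstacle is bookkeeping rather than mathematical depth: I must make sure the strict inequalities $1 \le p < q < r$ required by Theorem~\ref{equivalence of (p,q)-multi-norm on Lone} are actually available. The case $p = q$ is not covered by that theorem (which needs $p < q$), which is precisely why I peel it off separately and send it straight to Theorem~\ref{(p,p)-multi-bounded and weak compactness}. For $p < q$ there is always room to pick $r \in (q,\infty)$, so no difficulty arises there. No appeal to the Eberlein--\u{S}mulian reduction or to summing-operator theory is needed \emph{directly} here, since all of that machinery has been absorbed into the two theorems I am quoting; the corollary is genuinely a clean consequence of combining domination with the diagonal result.
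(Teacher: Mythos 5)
Your proof is correct and follows essentially the same route as the paper, which likewise derives the corollary directly from Theorems \ref{equivalence of (p,q)-multi-norm on Lone} and \ref{(p,p)-multi-bounded and weak compactness}. Your explicit handling of the edge case $p=q$ and the choice of $r>q$ simply spells out details the paper leaves implicit.
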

\begin{proof}
This is a consequence of Theorems \ref{equivalence of (p,q)-multi-norm on Lone} and \ref{(p,p)-multi-bounded and weak compactness}.
\end{proof}

\begin{remark} 
Theorem \ref{(p,p)-multi-bounded and weak compactness} cannot be generalized to $(p,q)$-multi-bounded sets in the case where $p<q$. Indeed, it is a result of Kwapie\'{n} and Pe{\l}czy\'{n}ski that 
\[
	S:(\alpha_n)\mapsto \left(\sum_{i=1}^n\alpha_i\right)_{n=1}^\infty,\quad \lone\to\linfty\,,
\] 
is $(q,p)$-summing for every $1\le p<q<\infty$, but $S$ is not weakly compact ({\it cf.} \cite[p. 210]{DJT}). Consider the operator 
$T\in\operators(\lone,\co)$ defined by requiring that 
$$
T(\delta_n)= \sum_{i=1}^n\delta_i\quad(n\in\naturals)\,.
$$
Then $\dual{T}=S$. In particular,  $T$ is not weakly compact, and so, by the Kre\u{\i}n-\u{S}mulian theorem \cite[Theorem 2.8.14]{Megginson}, 
the set $\set{T(\delta_n):\ n\in\naturals}$ is not relatively weakly compact. However, it follows from Proposition \ref{connection with
  (q,p)-summing operators} that $T\in\operators_{p,q}(\lone,\co)$, and so $\set{T(\delta_n):\ n\in\naturals}$ is $(p,q)$-multi-bounded. 
Thus we obtain a subset of $\co$ which is $(p,q)$-multi-bounded for every $1\le p<q<\infty$, but  which is not relatively weakly compact.
\end{remark}

\section{The standard $q$-multi-norm on $\Lspace^p$ spaces}
\label{section: standard (p,q)-multi-norm}

\noindent Let $(\Omega,\mu)$ be a measure space, and take $p,q$ with $1\leq p\leq q<\infty$. In \cite[$\S4.2$]{DP08}, there is a definition and discussion of the \emph{standard $q$-multi-norm} on $E:=\Lspace^p(\Omega)$. We recall the definition. 

Take $n\in\naturals$.  For each partition $\tuple{X}=\set{X_1,\dots,X_n}$ of $\Omega$ into measurable subsets and each 
$f_1,\ldots,f_n\in \Lspace^p(\Omega)$, we define
\[ 
		\norm{(f_1,\ldots,f_n)}^{[q]}_n =\sup_{\tuple{X}} \Big( \sum_{i=1}^n \|P_{X_i}f_i\|^q \Big)^{1/q}. 
\]
Here $P_{X_i}:f\mapsto f\chi_{X_i}$ is the projection of $\Lspace^p(\Omega)$ onto
$\Lspace^p(X_i)$, $\norm{\cdot}$ is the $\Lspace^p$-norm, 
and the supremum is taken over all such measurable partitions $\tuple{X}$ of $\Omega$. It is verified in \cite[$\S 4.2.1$]{DP08} 
that $(\norm{\cdot}^{[q]}_n:\ n\in\naturals)$ is a multi-norm based on
$\Lspace^p(\Omega)$; it is called the standard $q$-multi-norm on $\Lspace^p(\Omega)$ in \cite[Definition 4.21]{DP08}.

In terms of tensor
norms, we have the following, which applies in the special case where $q=p$.

\begin{theorem}
Let $\Omega$ be a measure space, and take $p\ge 1$. Then the standard $p$-multi-norm induces the $\co$-norm on $\co\otimes \Lspace^p(\Omega)$
which comes from identifying $\co\otimes \Lspace^p(\Omega)$ with a subspace of
the vector-valued space $\Lspace^p(\Omega,\co)$.
\end{theorem}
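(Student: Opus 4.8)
The plan is to exploit the bijection of Theorem \ref{multi-norm as tensor} between multi-norms based on $E=\Lspace^p(\Omega)$ and $\co$-norms on $\co\otimes E$, together with the uniqueness that this bijection entails. Rather than computing the $\co$-norm attached to the standard $p$-multi-norm directly, I would show that the norm inherited by $\co\otimes E$ as a subspace of $\Lspace^p(\Omega,\co)$ is itself a $\co$-norm and that the multi-norm it induces is exactly the standard $p$-multi-norm; the assertion then follows because a multi-norm determines its associated $\co$-norm uniquely.

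First I would make the identification explicit. The canonical map $\co\otimes E\to\Lspace^p(\Omega,\co)$ sends $a\otimes f$ to the vector-valued function $\omega\mapsto f(\omega)a$; it is readily seen to be injective, so it identifies $\co\otimes E$ with a subspace and endows it with a genuine norm $\norm{\cdot}_{\Lspace^p(\Omega,\co)}$. Under this identification the element $\sum_{j=1}^n\delta_j\otimes f_j$ corresponds to $\omega\mapsto(f_1(\omega),\ldots,f_n(\omega),0,0,\ldots)$. I would then check the two defining properties of a $\co$-norm. The identity $\norm{\delta_1\otimes f}_{\Lspace^p(\Omega,\co)}=\norm{f}_p$ is immediate, since $\norm{f(\omega)\delta_1}_{\co}=\abs{f(\omega)}$. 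For the operator condition, I would note that under the identification $T\otimes\id_E$ acts by the pointwise rule $F\mapsto(\omega\mapsto T(F(\omega)))$, so that the pointwise estimate $\norm{T(F(\omega))}_{\co}\le\norm{T}\,\norm{F(\omega)}_{\co}$ integrates to $\norm{(T\otimes\id_E)\sigma}_{\Lspace^p(\Omega,\co)}\le\norm{T}\,\norm{\sigma}_{\Lspace^p(\Omega,\co)}$ for every $T\in\operators(\co)$, in particular for $T\in\compactoperators(\co)$. Hence $\norm{\cdot}_{\Lspace^p(\Omega,\co)}$ is a $\co$-norm.

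By the formula of Theorem \ref{multi-norm as tensor}, the multi-norm that $\norm{\cdot}_{\Lspace^p(\Omega,\co)}$ induces is
\[
	\norm{(f_1,\ldots,f_n)}_n=\Big\|\sum_{j=1}^n\delta_j\otimes f_j\Big\|_{\Lspace^p(\Omega,\co)}=\Big(\int_\Omega\max_{1\le j\le n}\abs{f_j}^p\dd\mu\Big)^{1/p}\,,
\]
which is exactly the lattice multi-norm of the Banach lattice $\Lspace^p(\Omega)$ (Example \ref{lattice multi-norm}). The crux is then to show that this coincides with the standard $p$-multi-norm, i.e.\ that
\[
	\sup_{\tuple{X}}\Big(\sum_{i=1}^n\int_{X_i}\abs{f_i}^p\dd\mu\Big)^{1/p}=\Big(\int_\Omega\max_{1\le j\le n}\abs{f_j}^p\dd\mu\Big)^{1/p}\,.
\]
For the inequality $\le$ I would use that, for any measurable partition $\tuple{X}=\set{X_1,\ldots,X_n}$ of $\Omega$, one has $\sum_{i=1}^n\int_{X_i}\abs{f_i}^p\le\sum_{i=1}^n\int_{X_i}\max_j\abs{f_j}^p=\int_\Omega\max_j\abs{f_j}^p$. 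For the reverse inequality I would exhibit the partition that attains equality: assign each point $\omega$ to the set $X_i$ for the least index $i$ at which the pointwise maximum $\max_j\abs{f_j(\omega)}$ is attained. On this (measurable) partition $\abs{f_i}=\max_j\abs{f_j}$ on $X_i$, so the two sides agree and the supremum is attained.

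Finally, since $\norm{\cdot}_{\Lspace^p(\Omega,\co)}$ is a $\co$-norm inducing the standard $p$-multi-norm, the uniqueness in the bijection of Theorem \ref{multi-norm as tensor} forces it to equal the $\co$-norm induced by the standard $p$-multi-norm, which is what is claimed. I expect the only genuine content to lie in the partition computation of the last display---equivalently, in the fact that the standard $p$-multi-norm and the lattice multi-norm coincide on $\Lspace^p(\Omega)$; the verification that $\norm{\cdot}_{\Lspace^p(\Omega,\co)}$ is a $\co$-norm and the appeal to uniqueness are routine.
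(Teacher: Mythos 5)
Your proposal is correct and follows essentially the same route as the paper: the heart of both arguments is the identity $\sup_{\tuple{X}}\bigl(\sum_{i}\int_{X_i}\abs{f_i}^p\bigr)^{1/p}=\bigl(\int_\Omega\max_j\abs{f_j}^p\bigr)^{1/p}$, proved by the same partition (each point assigned to the least index attaining the pointwise maximum) together with the trivial reverse estimate. Your additional verification that the subspace norm from $\Lspace^p(\Omega,\co)$ is a $\co$-norm, and the explicit appeal to the uniqueness in the tensor-norm correspondence, are routine steps the paper leaves implicit in ``and so the result follows.''
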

\begin{proof}
We set $F=\Lspace^p(\Omega,\co)$. Take $n\in\naturals$, and let $f_1,\ldots,f_n\in \Lspace^p(\Omega)$, so that
\[ 
\Big\| \sum_{i=1}^n \delta_i\otimes f_i \Big\|_{F}^p =
\int_\Omega \Big\| \sum_{i=1}^n\delta_i\otimes f_i(t)\Big\|_{\co}^p \ \dd m(t)
= \int_\Omega \max_{i\in\naturals_n} |f_i(t)|^p \ \dd m(t)\,. 
\]
For each $i\in\naturals_n$, let $Y_i$ be the set of points of $\Omega$ at which 
$|f_i|$ equals $\max\set{\abs{f_j}:\ j\in\naturals_n}$. Set $X_1=Y_1$ and $X_j=Y_j\setminus\bigcup_{i=1}^{j-1} Y_i$ for each $j\in\set{2,\ldots,n}$, so that $\set{X_1,\ldots,X_n}$ is a measurable partition of $\Omega$. Then we see that
\[ 
	\sum_{i=1}^n \|\chi_{X_i}f_i\|^p = \sum_{i=1}^n \int_{X_i} |f_i(t)|^p\ \dd m(t)
	= \int_\Omega \max_{i\in\naturals_n} |f_i(t)|^p \ \dd m(t). \]
Thus 
$\norm{(f_1,\ldots,f_n)}^{[p]}_n\ge \norm{\sum_{i=1}^n \delta_i\otimes f_i}_F$.

On the other hand, for each measurable partition $\tuple{X}=\set{X_1,\ldots,X_n}$ of $\Omega$, we have
\[ 
\sum_{i=1}^n \|\chi_{X_i}f_i\|^p = \sum_{i=1}^n \int_{X_i} |f_i(t)|^p\ \dd m(t)
\leq \int_\Omega \max_{i\in\naturals_n} |f_i(t)|^p \ \dd m(t)\,, 
\]
 and so 
$\norm{(f_1,\ldots,f_n)}^{[p]}_n\le \norm{\sum_{i=1}^n \delta_i\otimes f_i}_F$.

Thus $\norm{(f_1,\ldots,f_n)}^{[p]}_n = \norm{\sum_{i=1}^n \delta_i\otimes f_i}_F$, and so the result follows.
\end{proof}

From the above theorem, it follows that, for every $f_1,\ldots,f_n\in \Lspace^p(\Omega)$, we have
\[
\norm{(f_1,\ldots, f_n)}_n^{[p]}=\norm{\abs{f_1}\vee\cdots \vee \abs{f_n}}=\norm{(f_1,\ldots, f_n)}_n^{L}\,.
\]
We do not have a similar description of the standard $q$-multi-norm on $\Lspace^p(\Omega)$ when $q>p$.

When $p=1$, it is well-known that $\Lone(\Omega) \projectivetensor E = \Lone(\Omega,E)$ for any
Banach space $E$, and so the standard $1$-multi-norm on $\Lone(\Omega)$ is the
maximum multi-norm ({\it cf.} \cite[Theorem 4.23]{DP08}). Thus, for $f_1,\ldots,f_n\in\Lone(\Omega)$, we have
\begin{align}\label{max, [1,1], lattice multi-norm}
	\norm{(f_1,\ldots, f_n)}_n^{\max}=\norm{\abs{f_1}\vee\cdots\vee\abs{f_n}}=\norm{(f_1,\ldots, f_n)}_n^{[1]}=\norm{(f_1,\ldots, f_n)}_n^{L}\,.
\end{align}

\section{The extension of the standard $q$-multi-norm}
\label{The extension of the standard q-multi-norm}

\noindent In this section, we shall give another description of the $(p,q)$-multi-norm based on a space $E$; the description will be required for our main theorem in
  \S\ref{Multi-norm and homology}. In that later section, we shall need to prove the amenability of a locally compact group $G$ by using information about
  $\operators(\Lone(G),\Lspace^p(G))$; the latter information is provided directly by the injectivity of $\Lspace^p(G)$. The main result of this section will
 give us a necessary bridge between $G$ and $\operators(\Lone(G),\Lspace^p(G))$.

Let $E$ be a Banach space, and let $((F^n,\norm{\cdot}_n):\ n\in\naturals)$ be a multi-normed space with $F\neq \set{0}$. For each $n \in \naturals$, we define a norm $\norm{\cdot}_n^{F}$ on the space $E^n$ by setting
\[
\norm{(x_1,\ldots,x_n)}^{F}_n=\sup\set{\norm{(Tx_1,\ldots,Tx_n)}_n\colon T\in \operators(E, F)_{[1]}}\quad (x_1,\ldots, x_n \in E)\,.
\]
It is immediately checked that $(\norm{\cdot}^F_n:\ n\in\naturals)$ is a multi-norm based on $E$ and that 
\begin{equation*}\label{eqI}
\mboperators(E, F)=\operators(E, F)\quad {\rm with}\quad \norm{T}_{mb}=\norm{T}\quad (T \in \mboperators(E, F))\,
\end{equation*}
when $\mboperators(E,F)$ is calculated with respect to the multi-norm $(\norm{\cdot}^F_n:\ n\in\naturals)$ based on $E$. 

Now suppose that $(\tnorm{\,\cdot\,}_n: n \in \naturals)$ is a multi-norm based on $E$ with the property that, with respect to this new multi-norm,  we have
\begin{align}\label{Eq: The extension of the standard $q$-multi-norm 1}
	\mboperators(E, F)=\operators(E, F)\quad {\rm with}\quad \norm{T}_{mb}=\norm{T}\quad\textrm{for each}\ T \in \mboperators(E, F)\,.
\end{align}
Then we see that 
\begin{align*}
	\norm{\tuple{x}}^{F}_n&=\sup\set{\norm{(Tx_1,\ldots,Tx_n)}_n\colon T\in \operators(E, F)_{[1]}}\\
	&=\sup\set{\norm{(Tx_1,\ldots,Tx_n)}_n\colon T\in \mboperators(E, F)_{[1]}}\leq \tnorm{\tuple{x}}_n\,
\end{align*}
for each $\tuple{x}=(x_1,\ldots, x_n) \in E^n$ and $n \in \naturals$. Thus  $(\norm{\cdot}^F_n:\ n\in\naturals)$ is the minimum multi-norm in $\E_E$ satisfying condition
 \eqref{Eq: The extension of the standard $q$-multi-norm 1}.

\begin{definition}
The multi-norm $(\norm{\cdot}_n^{F}: n \in \naturals)$ described above is the \emph{extension} to $E$ of the multi-norm on $F$.
\end{definition}

There is a discussion of extensions of multi-norms in \cite[$\S 6.5$]{DP08}. 

Now, let $(\Omega,\mu)$ be a measure space, and take $p,q$ with $1\le p\leq q < \infty$. For the rest of this section, we shall suppose also that $\Lspace^p(\Omega)$ is \emph{infinite--dimensional}. This is the same as requiring that, for every $n\in\naturals$, there exist pairwise--disjoint, measurable subsets $X_1,\ldots, X_n$  of $\Omega$ such that $0<\mu(X_i)<\infty$ for all $i\in\naturals_n$.

Again, we write $p'$ for the conjugate index of $p$, and set $F=\Lspace^{p'}(\Omega)$. For each $n\in\naturals$, 
let $D_n$ be the set of elements $(\lambda_1,\ldots,\lambda_n) \in (F_{[1]})^n$ such that the subsets $\support \lambda_1,\ldots, \support \lambda_n$ of $\Omega$ are pairwise disjoint. Then it is immediate from the definition of the standard $q$-multi-norm on $\Lspace^p(\Omega)$ that
\begin{align}\label{standard multi-norm, different formula}
	\norm{(f_1,\ldots,f_n)}_n^{[q]}=\sup\set{\left(\sum_{i=1}^{n}\abs{\duality{f_i}{\lambda_i}}^{q}\right)^{1/q}\colon (\lambda_1,\ldots,\lambda_n)\in D_n}
\end{align}
for every $(f_1,\ldots,f_n)\in\Lspace^p(\Omega)^n$ and every $n\in\naturals$.

For each $n\in\naturals$, set
\[
B_n(\dual{E})=\set{ (\dual{T}\varphi_1,\ldots, \dual{T}\varphi_n)\colon T \in \operators(E, \Lspace^{p}(\Omega))_{[1]},\, (\varphi_1,\ldots, \varphi_n) \in  D_n}\subset (\dual{E})^n\,.
\]

\begin{lemma}\label{4.6.19}
Let $E$ be a Banach space. Then $B_n(\dual{E})=\set{ \tuple{\lambda} \in (\dual{E})^n: \mu_{{p}, n}(\tuple{\lambda})\leq 1}$ for each $n\in\naturals$.
\end{lemma}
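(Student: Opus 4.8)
The plan is to prove the two inclusions separately. The set $B_n(\dual E)$ is defined as the collection of all tuples $(\dual T\varphi_1,\ldots,\dual T\varphi_n)$ arising from contractions $T\in\operators(E,\Lspace^p(\Omega))_{[1]}$ and disjointly-supported tuples $(\varphi_1,\ldots,\varphi_n)\in D_n$ in $\dual{\Lspace^p(\Omega)}=\Lspace^{p'}(\Omega)$; I must show this equals the closed $\mu_{p,n}$-unit ball of $(\dual E)^n$. The forward inclusion $B_n(\dual E)\subset\set{\tuple\lambda:\mu_{p,n}(\tuple\lambda)\le 1}$ should be the routine direction: given $\tuple\lambda=(\dual T\varphi_1,\ldots,\dual T\varphi_n)$, I would estimate $\mu_{p,n}(\tuple\lambda)$ directly from its definition. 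For any $x\in E_{[1]}$, I have $\duality{x}{\dual T\varphi_i}=\duality{Tx}{\varphi_i}$, so $\sum_i\abs{\duality{x}{\dual T\varphi_i}}^p=\sum_i\abs{\duality{Tx}{\varphi_i}}^p$. Because the $\varphi_i$ have pairwise-disjoint supports and each lies in $F_{[1]}$, this sum is controlled by $\norm{Tx}_p^p\le\norm{T}^p\norm{x}^p\le 1$ (here one uses that disjointly-supported functionals in $\Lspace^{p'}$ of norm at most $1$ act on $\Lspace^p$ like the coordinate functionals of an $\ell^p$-decomposition). Taking the supremum over $x\in E_{[1]}$ gives $\mu_{p,n}(\tuple\lambda)\le 1$.

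The reverse inclusion is where the real content lies. Given $\tuple\lambda=(\lambda_1,\ldots,\lambda_n)\in(\dual E)^n$ with $\mu_{p,n}(\tuple\lambda)\le 1$, I must manufacture a contraction $T\in\operators(E,\Lspace^p(\Omega))$ and disjointly-supported $\varphi_i\in\Lspace^{p'}(\Omega)_{[1]}$ so that $\dual T\varphi_i=\lambda_i$. The natural idea is to use the infinite-dimensionality hypothesis to choose $n$ pairwise-disjoint measurable sets $X_1,\ldots,X_n\subset\Omega$ with $0<\mu(X_i)<\infty$, pick a norm-one function $\varphi_i\in\Lspace^{p'}(\Omega)$ supported in $X_i$, and then define $T:E\to\Lspace^p(\Omega)$ by $Tx=\sum_{i=1}^n\duality{x}{\lambda_i}\,\psi_i$, where $\psi_i\in\Lspace^p(X_i)$ is a norm-one function ``dual'' to $\varphi_i$ on $X_i$, i.e.\ chosen so that $\duality{\psi_i}{\varphi_i}=1$ while $\norm{\psi_i}_p=\norm{\varphi_i}_{p'}=1$ (a single Hölder-equality pair, available since each $\Lspace^p(X_i)$ is infinite- or at least one-dimensional for $\mu(X_i)>0$). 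With the $\psi_i$ disjointly supported, one computes $\dual T\varphi_j=\lambda_j$ from $\duality{x}{\dual T\varphi_j}=\duality{Tx}{\varphi_j}=\sum_i\duality{x}{\lambda_i}\duality{\psi_i}{\varphi_j}=\duality{x}{\lambda_j}$.

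The main obstacle will be verifying that this $T$ is a \emph{contraction}, i.e.\ $\norm{T}\le 1$, and this is exactly where the assumption $\mu_{p,n}(\tuple\lambda)\le 1$ must be used. Since the $\psi_i$ are disjointly supported and each has $\Lspace^p$-norm one, I have $\norm{Tx}_p^p=\sum_{i=1}^n\abs{\duality{x}{\lambda_i}}^p\,\norm{\psi_i}_p^p=\sum_{i=1}^n\abs{\duality{x}{\lambda_i}}^p$, and by the definition of the weak $p$-summing norm this is at most $\mu_{p,n}(\tuple\lambda)^p\norm{x}^p\le\norm{x}^p$. Hence $\norm{T}\le 1$, so $T\in\operators(E,\Lspace^p(\Omega))_{[1]}$, and with $(\varphi_1,\ldots,\varphi_n)\in D_n$ the tuple $\tuple\lambda$ lies in $B_n(\dual E)$. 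I would take care that the disjoint-support condition is what converts both the norm of $Tx$ and the action of $\dual T$ into clean coordinate-wise expressions; the role of the infinite-dimensionality hypothesis is solely to guarantee that $n$ such disjoint sets of positive finite measure exist, so that the construction can be carried out for every $n$.
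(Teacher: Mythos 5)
Your proposal is correct and follows essentially the same route as the paper: the forward inclusion via Hölder on each support piece together with disjointness, and the reverse inclusion by choosing disjoint sets $X_1,\ldots,X_n$ of finite positive measure and building $T$ as a finite-rank sum of $\lambda_i$ against disjointly supported norm-one functions in $\Lspace^p$. The paper simply takes the explicit Hölder-equality pair $\varphi_i=\chi_{X_i}/\mu(X_i)^{1/p'}$ and $\psi_i=\chi_{X_i}/\mu(X_i)^{1/p}$ where you leave the pair abstract; the computations are identical.
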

\begin{proof}
Set $C_n(\dual{E})=\set{ \tuple{\lambda} \in (\dual{E})^n: \mu_{p, n}(\tuple{\lambda})\leq 1}$.

Let $T \in \operators(E, \Lspace^{p}(\Omega))_{[1]}$ and ($\varphi_1,\ldots, \varphi_n) \in  D_n$, so that $T': F\to E'$. For each $i \in \naturals_n$, set  $X_i=\support \varphi_i$,  and then set 
$ \tuple{\lambda}=(\dual{T}\varphi_1,\ldots, \dual{T}\varphi_n) \in (\dual{E})^{n}$.
For each $x \in E_{[1]}$, we have
\begin{align*}
\lp\sum_{i=1}^n\abs{\duality{\dual{T}\varphi_i}{x}}^{p}\rp^{1/p}&=\lp\sum_{i=1}^n\abs{\duality{\varphi_i}{Tx}}^{p}\rp^{1/p}\leq \lp\sum_{i=1}^n\norm{\chi_{X_i}Tx}
^{p}\rp^{1/p} \leq \norm{Tx}
\leq 1\,.
\end{align*}
Hence $\mu_{p,n}(\tuple{\lambda})\leq 1$, and so $B_n(\dual{E})\subset C_n(\dual{E})$.

Conversely, let $\tuple{\lambda}=(\lambda_1,\ldots,\lambda_n) \in C_n(\dual{E})$, and then choose pairwise--disjoint, measurable subsets $X_1,\ldots, X_n$ of $\Omega$ with $0<\mu(X_i)<\infty\,\; (i \in \naturals_n)$.  
Set 
\begin{align*}
	\varphi_i=\begin{cases}
	\dfrac{\chi_{X_i}}{\mu  (X_i)^{1/p'}}&\ \textrm{when}\ p>1\\
	\chi_{X_i}&\ \textrm{when}\ p=1
	\end{cases}\quad (i \in \naturals_n)\,,
\end{align*}
so that $(\varphi_1,\ldots,\varphi_n) \in D_n$. Next set
\[
T=\sum_{i=1}^{n}\lambda_i\otimes\frac{\chi_{X_i}}{\mu(X_i)^{1/p}}\in E'\otimes L^p(\Omega) \subset \operators(E,\Lspace^{p}(\Omega))\,,
\]
where we again use the identification of \eqref{tensor form of finite-rank}. For $x\in E$, we have
\begin{align*}
\norm{Tx}=\flexiblenorm{\sum_{i=1}^n \duality{x}{\lambda_i}\frac{\chi_{X_i}}{\mu(X_i)^{1/p}}}
= \left(\sum_{i=1}^n\abs{\duality{x}{\lambda_i}}^{p}\right)^{1/p} \leq\mu_{p,n}(\tuple{\lambda})\norm{x}\le \norm{x}\,.
\end{align*}
It follows that $T \in \operators(E,\Lspace^{p}(\Omega))_{[1]}$. Since it can be seen that $\tuple{\lambda}=(\dual{T}\varphi_1,\ldots, \dual{T}\varphi_n)$, we have 
$C_n(\dual{E})\subset B_n(\dual{E})$.

Hence $B_n(\dual{E})=C_n(\dual{E})$, as required. 
\end{proof}

The following proposition now follows from Lemma \ref{4.6.19} and equation \eqref{standard multi-norm, different formula}.

\begin{proposition}\label{4.6.20}
Let $E$ be a Banach space, and take $p,q$ with $1\le p\leq q < \infty$. Let $\Omega$ be a measure space such that $\Lspace^{p}(\Omega)$ is infinite--dimensional.
 Then the extension to $E$ of the standard $q$-multi-norm on $\Lspace^{p}(\Omega)$ is the $(p,q)$-multi-norm on $E$.\enproof
\end{proposition}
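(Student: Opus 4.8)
The plan is to unwind the definition of the extension multi-norm and to match it, term by term, against the definition of the $(p,q)$-multi-norm, using the two quoted results as a bridge. Fix $n\in\naturals$ and $\tuple{x}=(x_1,\ldots,x_n)\in E^n$, and write $F=\Lspace^p(\Omega)$. First I would expand $\norm{\tuple{x}}_n^{F}$ directly from its definition, namely as the supremum over $T\in\operators(E,F)_{[1]}$ of $\norm{(Tx_1,\ldots,Tx_n)}_n^{[q]}$, and then substitute into the inner term the alternative formula \eqref{standard multi-norm, different formula} for the standard $q$-multi-norm, so that $\norm{(Tx_1,\ldots,Tx_n)}_n^{[q]}$ becomes a supremum over $(\varphi_1,\ldots,\varphi_n)\in D_n$ of $\lp\sum_{i=1}^n\abs{\duality{Tx_i}{\varphi_i}}^q\rp^{1/q}$.

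The key manipulation is then to combine the two nested suprema. Using the adjoint relation $\duality{Tx_i}{\varphi_i}=\duality{x_i}{\dual{T}\varphi_i}$ and the fact that the supremum over $T$ of a supremum over $(\varphi_i)$ equals the supremum over all pairs $(T,(\varphi_i))$, I obtain
\[
\norm{\tuple{x}}_n^{F}=\sup\set{\lp\sum_{i=1}^n\abs{\duality{x_i}{\dual{T}\varphi_i}}^q\rp^{1/q}\colon T\in\operators(E,F)_{[1]},\ (\varphi_1,\ldots,\varphi_n)\in D_n}\,.
\]
The tuple $(\dual{T}\varphi_1,\ldots,\dual{T}\varphi_n)$ appearing here runs over precisely the set $B_n(\dual{E})$, by its very definition. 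Applying Lemma \ref{4.6.19}, which identifies $B_n(\dual{E})$ with $\set{\tuple{\lambda}\in(\dual{E})^n\colon\mu_{p,n}(\tuple{\lambda})\le 1}$, the right-hand side becomes the supremum of $\lp\sum_{i=1}^n\abs{\duality{x_i}{\lambda_i}}^q\rp^{1/q}$ over all $\tuple{\lambda}=(\lambda_1,\ldots,\lambda_n)$ with $\mu_{p,n}(\tuple{\lambda})\le 1$, which is exactly $\norm{\tuple{x}}_n^{(p,q)}$.

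There is really no serious obstacle here: all of the genuine content has already been isolated into Lemma \ref{4.6.19}, whose two inclusions encode the nontrivial correspondence between operators into $\Lspace^p(\Omega)$ of norm at most one and tuples of bounded weak $p$-summing norm. The only point requiring a moment's care is the legitimacy of merging the two suprema, which is immediate once one views both as a single supremum over the product index set. The infinite-dimensionality hypothesis on $\Lspace^p(\Omega)$ enters only through Lemma \ref{4.6.19}, where it supplies enough pairwise-disjoint measurable sets of finite positive measure, and so need not be revisited here.
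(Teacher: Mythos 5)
Your proposal is correct and is exactly the argument the paper intends: the Proposition is stated as an immediate consequence of Lemma \ref{4.6.19} and equation \eqref{standard multi-norm, different formula}, and your unwinding of the two nested suprema into a single supremum over $B_n(\dual{E})$ is precisely how those two ingredients combine. Nothing further is needed.
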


When $p>1$, since $\bidual{\Lspace^p(\Omega)}=\Lspace^p(\Omega)$, we can do the same as the above for $\bidual{E}$.

\begin{proposition}\label{4.6.21}
Let $E$ be a Banach space, and take $p,q$ with $1< p\leq q < \infty$. Let $\Omega$ be a measure space such that $\Lspace^{p}(\Omega)$ is infinite--dimensional. 
 Then, for each $\Phi_1,\ldots,\Phi_n \in \bidual{E}$, we have
\[
\norm{(\Phi_1,\ldots,\Phi_n)}_n^{(p,q)}=\sup \norm{(\bidual{T}(\Phi_1),\ldots,\bidual{T}(\Phi_n))}_n^{[q]}
\]
where the supremum is taken over all $T \in \operators(E, \Lspace^{p}(\Omega))_{[1]}$.
\end{proposition}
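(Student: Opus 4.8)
The plan is to reduce the statement to Lemma~\ref{4.6.19} together with the dual description of the $(p,q)$-multi-norm provided by Lemma~\ref{weak (p,q)-multi-norm on dual space}, exploiting that $\bidual{\Lspace^p(\Omega)}=\Lspace^p(\Omega)$ when $p>1$.

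First I would note that, by the reflexivity of $\Lspace^p(\Omega)$, each $T\in\operators(E,\Lspace^p(\Omega))$ has bidual $\bidual{T}\colon\bidual{E}\to\bidual{\Lspace^p(\Omega)}=\Lspace^p(\Omega)$ with $\norm{\bidual{T}}=\norm{T}$, so that $(\bidual{T}\Phi_1,\ldots,\bidual{T}\Phi_n)\in\Lspace^p(\Omega)^n$ and the right-hand side is meaningful. Fix $T\in\operators(E,\Lspace^p(\Omega))_{[1]}$ and recall that here $F=\Lspace^{p'}(\Omega)=\dual{\Lspace^p(\Omega)}$. Applying the alternative formula \eqref{standard multi-norm, different formula} for the standard $q$-multi-norm, with $f_i=\bidual{T}\Phi_i$ and the test tuple ranging over $D_n$, and then using the identity $\duality{\bidual{T}\Phi_i}{\varphi_i}=\duality{\dual{T}\varphi_i}{\Phi_i}$ (valid since $\bidual{T}=(\dual{T})'$ and $\bidual{\Lspace^p(\Omega)}=\Lspace^p(\Omega)$), I would obtain
\[
\norm{(\bidual{T}\Phi_1,\ldots,\bidual{T}\Phi_n)}^{[q]}_n=\sup_{(\varphi_1,\ldots,\varphi_n)\in D_n}\Big(\sum_{i=1}^n\abs{\duality{\dual{T}\varphi_i}{\Phi_i}}^q\Big)^{1/q}\,,
\]
where on the right $\dual{T}\varphi_i\in\dual{E}$ acts on $\Phi_i\in\bidual{E}$.

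Next I would take the supremum over all $T\in\operators(E,\Lspace^p(\Omega))_{[1]}$ and merge the two suprema into a single joint supremum over the pairs $(T,(\varphi_1,\ldots,\varphi_n))$ in $\operators(E,\Lspace^p(\Omega))_{[1]}\times D_n$. By Lemma~\ref{4.6.19}, as such a pair varies the tuple $(\dual{T}\varphi_1,\ldots,\dual{T}\varphi_n)$ runs over precisely $B_n(\dual{E})=\set{\tuple{\lambda}\in(\dual{E})^n\colon\mu_{p,n}(\tuple{\lambda})\le 1}$. Hence the right-hand side of the proposition equals
\[
\sup\set{\Big(\sum_{i=1}^n\abs{\duality{\lambda_i}{\Phi_i}}^q\Big)^{1/q}\colon (\lambda_1,\ldots,\lambda_n)\in(\dual{E})^n,\ \mu_{p,n}(\lambda_1,\ldots,\lambda_n)\le 1}\,.
\]

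Finally I would identify this quantity with $\norm{(\Phi_1,\ldots,\Phi_n)}^{(p,q)}_n$ by applying Lemma~\ref{weak (p,q)-multi-norm on dual space} to the normed space $\dual{E}$, whose dual is $\bidual{E}$: in the notation of that lemma the elements $\Phi_i\in\bidual{E}$ play the role of the dual vectors and the $\lambda_i\in\dual{E}$ play the role of the predual vectors. I expect the only genuine subtlety to lie in this last reindexing, namely in checking that the weak $p$-summing constraint appearing in Lemma~\ref{4.6.19} (a supremum over $x\in E_{[1]}$) coincides with the constraint demanded when Lemma~\ref{weak (p,q)-multi-norm on dual space} is read with base space $\dual{E}$ (a priori a supremum over $\bidual{E}_{[1]}$); this is exactly the content of \eqref{weak p-summing norm on dual space}, which rests on the weak$^*$-density of $E_{[1]}$ in $\bidual{E}_{[1]}$. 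Combining the three displays then yields the asserted equality.
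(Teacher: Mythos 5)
Your proposal is correct and follows essentially the same route as the paper's proof: both rest on the chain formed by equation \eqref{standard multi-norm, different formula}, the adjoint identity $\duality{\bidual{T}\Phi_i}{\varphi_i}=\duality{\dual{T}\varphi_i}{\Phi_i}$, Lemma \ref{4.6.19}, and Lemma \ref{weak (p,q)-multi-norm on dual space}, with the paper merely traversing the chain from the left-hand side rather than the right. The subtlety you flag about computing $\mu_{p,n}$ on $(\dual{E})^n$ via $E_{[1]}$ rather than $\bidual{E}_{[1]}$ is indeed resolved by \eqref{weak p-summing norm on dual space}, exactly as you say.
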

\begin{proof}
Let $\tuple{\Phi}=(\Phi_1,\ldots,\Phi_n)\in (\bidual{E})^n$. By Lemma \ref{weak (p,q)-multi-norm on dual space}, we have
\[
\norm{\tuple{\Phi}}_n^{(p,q)}=\sup\set{\lp\sum_{i=1}^n\abs{\duality{\Phi_i}{\lambda_i}}^{q}\rp^{1/q}: \tuple{\lambda} \in (\dual{E})^n,\, \mu_{p,n}(\tuple{\lambda})\leq 1}\,.
\]
By Lemma \ref{4.6.19}, this is equal to
$$
\norm{\Phi}_n^{(p,q)}=\sup\set{\lp\sum_{i=1}^n\abs{\duality{\Phi_i}{\dual{T}\varphi_i}}^{q}\rp^{1/q}: T \in \operators(E, \Lspace^{p}(\Omega))_{[1]},\,  (\varphi_1,\ldots,\varphi_n)\in  D_n} \,.
$$
Hence
\begin{align*}
\norm{\Phi}_n^{(p,q)} &=\sup\set{\lp\sum_{i=1}^n\abs{\duality{\bidual{T}\Phi_i}{\varphi_i}}^{q}\rp^{1/q}: T \in \operators(E, \Lspace^{p}(\Omega))_{[1]},\, (\varphi_1,\ldots,\varphi_n) \in  D_n} \\
&=\sup \set{\norm{(\bidual{T}(\Phi_1),\ldots,\bidual{T}(\Phi_n))}_n^{[q]}: T \in \operators(E, \Lspace^{p}(\Omega))_{[1]}}\,
\end{align*}
by equation \eqref{standard multi-norm, different formula}, which gives the result.
\end{proof}

\section{Left $(p,q)$-multi-invariant means} 
\noindent In this section, we shall generalize the concept of a left-invariant mean for a locally compact group. 

\begin{definition}
Let $G$ be a locally compact group, and take $p,q$ with $1\leq p\leq q<\infty$. A functional $\Lambda \in \dual{\Linfty(G)}$ is \emph{left $(p,q)$-multi-invariant} if the set
 $\set{s\cdot \Lambda\colon s \in G}$ is multi-bounded in the $(p,q)$-multi-norm. 
The group $G$ is {\it left $(p,q)$-amenable} if there exists a left $(p,q)$-multi-invariant mean on $\Linfty(G)$.
\end{definition}

The idea behind this definition is to attempt to measure the `left-invariance' of a mean $\Lambda \in \dual{\Linfty(G)}$ by measuring the growth of the sets $\set{s\cdot \Lambda: s \in F}$ 
as $F$ ranges through all the finite subsets of $G$. 

It follows immediately from the multi-norm axiom (A4) and Theorem \ref{equivalence of (p,q)-multi-norm on Lone} that we have the following implications for a mean $\Lambda\in\dual{\Linfty(G)}$:
for every $1\le p<q<r<\infty$, we have
\begin{align*}
\textrm{left-invariant}\Rightarrow \textrm{left}\ (q,q)\text{-invariant} \Rightarrow &\textrm{left}\ (p,q)\text{-invariant}\\
&\quad\quad\Updownarrow\\
 &\textrm{left}\ (1,q)\text{-invariant}\Rightarrow \textrm{left}\ (r,r)\text{-invariant}\,.
\end{align*}

 We shall now show that  the left $(p,q)$-multi-invariance property is preserved when passing from a functional on $L^{\infty}(G)$ to an appropriate mean; this is analogous to a
standard property of left-invariant functionals.

\begin{lemma}\label{(p,q)-multi-invariance and absolute value}
Let $G$ be a locally compact group, and take $p,q$ with $1\leq p\leq q<\infty$. Suppose that $\Lambda$ is a non-zero, left $(p,q)$-multi-invariant functional
 on $\Linfty(G)$. Then $\abs{\Lambda}/\norm{\Lambda}$ is a left $(p,q)$-invariant mean on $\Linfty(G)$, and $G$ is left $(p,q)$-amenable.
\end{lemma}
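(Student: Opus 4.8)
The plan is to prove the three assertions in turn: that $M:=\abs{\Lambda}/\norm{\Lambda}$ is a mean, that the group action commutes with taking absolute values, and that passing to absolute values does not increase the $(p,q)$-multi-norm. The multi-invariance of $M$ then drops out of the hypothesis on $\Lambda$. Since $\Linfty(G)$ is a unital commutative $C^*$-algebra, it is $*$-isomorphic to $\C(K)$ for a compact space $K$, and under this identification $\dual{\Linfty(G)}$ becomes the space $M(K)$ of regular complex Borel measures, with $\abs{\Lambda}$ corresponding to the total-variation measure $\abs{\mu}$ of the measure $\mu$ representing $\Lambda$ (this is the AL-space structure of $\dual{\Linfty(G)}$, as in the Kakutani representation recalled before Theorem \ref{equivalence of (p,q)-multi-norm on Lone}). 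Then $\abs{\Lambda}\geq 0$, $\norm{\abs{\Lambda}}=\norm{\Lambda}$, and $\duality{1}{\abs{\Lambda}}=\abs{\mu}(K)=\norm{\Lambda}$, so $M$ satisfies $\duality{1}{M}=\norm{M}=1$ and is a mean on $\Linfty(G)$.

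Next I would record the equivariance $s\cdot\abs{\Lambda}=\abs{s\cdot\Lambda}$ for each $s\in G$. The translation $\phi\mapsto s\cdot\phi$ is a $*$-automorphism of $\Linfty(G)$, hence corresponds to a homeomorphism of $K$; since the action on $\dual{\Linfty(G)}$ is its adjoint, $s\cdot\Lambda$ is the push-forward of $\mu$ along a homeomorphism of $K$. Because total variation commutes with push-forward along a homeomorphism, we obtain $s\cdot\abs{\Lambda}=\abs{s\cdot\Lambda}$.

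The crux is the inequality
\[
	\norm{(\abs{\Lambda_1},\ldots,\abs{\Lambda_n})}^{(p,q)}_n\leq\norm{(\Lambda_1,\ldots,\Lambda_n)}^{(p,q)}_n\qquad(\Lambda_1,\ldots,\Lambda_n\in\dual{\Linfty(G)},\ n\in\naturals)\,.
\]
To prove it I would compute both sides with the formula of Lemma \ref{weak (p,q)-multi-norm on dual space} (applied with $E=\Linfty(G)=\C(K)$): the left-hand side is the supremum of $\lp\sum_i\abs{\duality{x_i}{\abs{\Lambda_i}}}^q\rp^{1/q}$ over $x_1,\ldots,x_n\in\C(K)$ with $\mu_{p,n}(x_1,\ldots,x_n)\leq 1$. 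Fix such a tuple and $\varepsilon>0$. Writing $\dd\mu_i=u_i\dd\abs{\mu_i}$ with $\abs{u_i}=1$ a.e., I would approximate $\overline{u_i}$ in $\Lone(\abs{\mu_i})$ by a continuous $h_i$ with $\abs{h_i}\leq 1$ (Lusin's theorem), and set $y_i=\abs{x_i}\,h_i\in\C(K)$. Then $\abs{y_i}\leq\abs{x_i}$ pointwise, and using that for $\C(K)$ one has $\mu_{p,n}(x_1,\ldots,x_n)=\sup_{t\in K}\lp\sum_i\abs{x_i(t)}^p\rp^{1/p}$ — the extreme points of the unit ball of $M(K)$ being the unimodular point masses, at which the convex weak$^*$-continuous map $\nu\mapsto(\sum_i\abs{\duality{x_i}{\nu}}^p)^{1/p}$ attains its maximum — it follows that $\mu_{p,n}(y_1,\ldots,y_n)\leq\mu_{p,n}(x_1,\ldots,x_n)\leq 1$. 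On the other hand $\abs{\duality{x_i}{\abs{\Lambda_i}}}\leq\int\abs{x_i}\dd\abs{\mu_i}$, while the choice of $h_i$ gives $\abs{\duality{y_i}{\Lambda_i}}=\abs{\int\abs{x_i}\,h_iu_i\dd\abs{\mu_i}}\geq\int\abs{x_i}\dd\abs{\mu_i}-\varepsilon/n$, so that $\abs{\duality{x_i}{\abs{\Lambda_i}}}\leq\abs{\duality{y_i}{\Lambda_i}}+\varepsilon/n$. Feeding $y_1,\ldots,y_n$ into the formula for $\norm{(\Lambda_1,\ldots,\Lambda_n)}^{(p,q)}_n$ and applying the triangle inequality in $\lspace^{\,q}$ yields $\lp\sum_i\abs{\duality{x_i}{\abs{\Lambda_i}}}^q\rp^{1/q}\leq\norm{(\Lambda_1,\ldots,\Lambda_n)}^{(p,q)}_n+\varepsilon$; letting $\varepsilon\searrow 0$ and taking the supremum over the $x$-tuple gives the inequality.

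Applying this with $\Lambda_i=s_i\cdot\Lambda$ and using $\abs{s_i\cdot\Lambda}=s_i\cdot\abs{\Lambda}$, I conclude that $\norm{(s_1\cdot\abs{\Lambda},\ldots,s_n\cdot\abs{\Lambda})}^{(p,q)}_n\leq\multibound_{p,q}\set{s\cdot\Lambda:s\in G}$ for all $s_1,\ldots,s_n\in G$ and all $n$, so $\set{s\cdot\abs{\Lambda}:s\in G}$, and hence $\set{s\cdot M:s\in G}$, is $(p,q)$-multi-bounded; thus $M$ is a left $(p,q)$-multi-invariant mean and $G$ is left $(p,q)$-amenable. I expect the main obstacle to be precisely the comparison inequality: the difficulty is that the phases $u_i$ in the polar decompositions vary with $i$, so no single multiplier realises all the $\duality{x_i}{\abs{\Lambda_i}}$ simultaneously, and it is the combination of the continuous approximation with the point-mass description of $\mu_{p,n}$ on $\C(K)$ (which makes $\mu_{p,n}$ monotone under pointwise domination) that makes the estimate close.
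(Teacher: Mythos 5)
Your proof is correct, and its skeleton (mean property, the equivariance $s\cdot\abs{\Lambda}=\abs{s\cdot\Lambda}$, and a comparison of the $(p,q)$-multi-norms of a tuple and of its absolute values) matches the paper's. The difference lies in how the comparison is executed. The paper identifies $\dual{\Linfty(G)}$ as a Banach lattice with $\Lone(\Omega)$ for some measure space $\Omega$ and computes the $(p,q)$-multi-norm against test functionals in $\Linfty(\Omega)$; there the measurable phase functions $\overline{u_i}$ from the polar decompositions are themselves admissible multipliers, and the substitution $\varphi_i\mapsto\varphi_i\overline{u_i}$ changes nothing because $\mu_{p,n}(\varphi_1,\ldots,\varphi_n)$ depends only on the moduli $\abs{\varphi_i}$ (the fact quoted from \cite[2.6]{Jameson}). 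This yields the \emph{equality} $\norm{(\Lambda_1,\ldots,\Lambda_n)}^{(p,q)}_n=\norm{(\abs{\Lambda_1},\ldots,\abs{\Lambda_n})}^{(p,q)}_n$ in one line. You instead work in the $\C(K)$/$M(K)$ picture, where the test functions must be continuous, so you have to approximate the phases via Lusin's theorem and prove the point-mass formula $\mu_{p,n}(x_1,\ldots,x_n)=\sup_{t\in K}\bigl(\sum_i\abs{x_i(t)}^p\bigr)^{1/p}$ to see that the substitution $x_i\mapsto\abs{x_i}h_i$ does not increase $\mu_{p,n}$; you obtain only the inequality $\norm{(\abs{\Lambda_1},\ldots,\abs{\Lambda_n})}^{(p,q)}_n\le\norm{(\Lambda_1,\ldots,\Lambda_n)}^{(p,q)}_n$, which is all that is needed. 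Your route is longer, but it is self-contained where the paper leans on the cited lattice fact, and it makes explicit why pointwise domination of moduli controls $\mu_{p,n}$ on $\C(K)$. Both arguments are valid; if you want the shorter one, move to the AL-space side of the duality, where the phases live in the dual rather than the predual and no approximation is required.
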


\begin{proof}  Recall that $\dual{\Linfty(G)}$ can be identified isometrically as a Banach lattice with  $\Lone(\Omega)$ for some measure space $(\Omega,\mu)$. 
Set $\widetilde{\Lambda}:=\abs{\Lambda}/\norm{\Lambda}$. Since $\norm{\abs{\Lambda}}=\norm{\Lambda}=\langle 1,\,\abs{\Lambda}\rangle$, it is clear that  $\widetilde{\Lambda}$ is  a mean on $\Linfty(G)$. 
Since $\mu_{p,n}(\varphi_1,\ldots,\varphi_n)=\mu_{p,n}(\psi_1,\ldots,\psi_n)$ for every $n\in\naturals$ and every $\varphi_1,\ldots,\varphi_n,\psi_1,\ldots,\psi_n\in\Linfty(\Omega)$ 
with $\abs{\varphi_i}=\abs{\psi_i}$ ($i\in\naturals_n$) \cite[2.6]{Jameson}, we see that 
\[
	\norm{(\Lambda_1,\ldots,\Lambda_n)}^{(p,q)}_n=\norm{(\abs{\Lambda_1},\ldots,\abs{\Lambda_n})}^{(p,q)}_n\qquad(\Lambda_1,\ldots,\Lambda_n\in\dual{\Linfty(G)})\,.
\]
Now note that $\abs{s\cdot\Lambda}=s\cdot\abs{\Lambda}$ for every $s\in G$, and so  
 $\{s\cdot\widetilde{\Lambda}\colon s \in G\}$ is multi-bounded in the $(p,q)$-multi-norm.   The result follows.
\end{proof}

It turns out that left $(p,q)$-amenability is the same as amenability for a locally compact group $G$, as Theorem \ref{(p,q)-amenable is amenable}, given below, will show.  We shall use the Ryll-Nardzewski fixed point theorem; to be explicit, we first quote a special form of the version of this theorem given in \cite[Theorem A.2.2]{Green} and \cite[\S 2.36]{Paterson}.

\begin{theorem}\label{RN}  Let $E$ be a  Banach   space, and let $K$ be a convex, weakly compact subset of $E$.  Suppose that $\Sigma$ is a semigroup of affine maps from $K$ to $K$ such that $\left\Vert Tx -Ty\right\Vert = \left\Vert  x - y\right\Vert$ for all $x,y\in K$ and $T\in \Sigma$.  Then there exists $x_0\in K$ such that $Tx_0=x_0$ for each $T\in \Sigma$.
\enproof
\end{theorem}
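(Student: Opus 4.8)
The statement is the Ryll--Nardzewski fixed point theorem in the special case where the affine maps are isometries of $K$. The plan is to produce a common fixed point as a point of a \emph{minimal invariant set}, the existence of which is guaranteed by weak compactness, and then to force such a minimal set to be a single point by playing the isometry (hence \emph{noncontracting}) hypothesis against the geometry of weakly compact convex sets. First I would record two preliminary observations. Since each $T\in\Sigma$ satisfies $\|Tx-Ty\|=\|x-y\|$, the semigroup is noncontracting, i.e.\ $\inf_{T\in\Sigma}\|Tx-Ty\|>0$ whenever $x\neq y$; and each $T$, being the restriction to $K$ of a bounded affine map on the closed affine span of $K$, is weakly continuous. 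Consequently, for every $T$ the set $\{x\in K:Tx=x\}$ is convex and norm-closed, hence weakly closed.

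Next I would set up the minimal-set reduction. Consider the family $\mathcal F$ of all nonempty, weakly compact, convex, $\Sigma$-invariant subsets of $K$ (here $C$ is $\Sigma$-invariant if $T(C)\subseteq C$ for all $T\in\Sigma$); it contains $K$ itself. Ordering $\mathcal F$ by reverse inclusion, every chain has an upper bound, because the intersection of a chain of nonempty weakly compact sets is nonempty by the finite intersection property and is again convex and $\Sigma$-invariant; Zorn's lemma then yields a minimal element $C_0\in\mathcal F$. If I can show that $C_0$ is a singleton $\{x_0\}$, then invariance gives $Tx_0=x_0$ for every $T\in\Sigma$, which is exactly the conclusion. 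A useful consequence of minimality, used repeatedly below, is that $C_0=\overline{\mathrm{co}}^{\,w}(\Sigma x)$ for every $x\in C_0$, since the weak closure of the convex hull of an orbit is itself a member of $\mathcal F$ contained in $C_0$. In particular the orbit $\Sigma x$ has the same diameter as $C_0$ for each $x$.

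The heart of the argument is to show that $\delta:=\mathrm{diam}(C_0)=0$. Suppose instead that $\delta>0$. Here I would invoke the key geometric input: a weakly compact convex set has the Radon--Nikodym property and is therefore \emph{dentable}, so $C_0$ admits slices of arbitrarily small diameter; equivalently (Namioka's lemma), there is a point $z\in C_0$ and a functional $\varphi\in\dual{E}$ separating $z$ from the closed convex hull of the part of $C_0$ lying at distance nearly $\delta$ from $z$. Taking $z$ to be a point where $\varphi$ is nearly maximal and using $C_0=\overline{\mathrm{co}}^{\,w}(\Sigma z)$, the orbit of $z$ must on the one hand cluster inside a small slice, while on the other hand realize the full diameter $\delta$; the noncontracting hypothesis, which preserves all mutual distances along the orbit, then propagates these near-coincidences so as to contradict the separation property furnished by dentability. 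This contradiction forces $\delta=0$, whence $C_0=\{x_0\}$ and $x_0$ is the required common fixed point.

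The main obstacle is precisely this last step: the interaction between the combinatorial--geometric content of Namioka's lemma (equivalently, dentability, or the normal structure of weakly compact convex sets) and the noncontracting hypothesis. For a single map, or a commuting family, this reduces to the clean Chebyshev-centre argument of Kirk, in which the set of Chebyshev centres of $C_0$ is seen to be a \emph{proper} invariant subset, contradicting minimality; for a genuinely noncommutative semigroup $\Sigma$ the Chebyshev-centre set need not be invariant, and it is exactly here that the finer Namioka lemma is needed. I would therefore devote most of the work to establishing the geometric lemma in the form required and to the bookkeeping that turns small-diameter slices, together with the distance-preserving action of $\Sigma$ on orbits, into a contradiction with $\delta>0$. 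The roles of weak compactness (for Zorn's lemma and for the Radon--Nikodym property) and of affineness (so that orbit hulls are invariant) are essential throughout.
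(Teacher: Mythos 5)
First, a point of reference: the paper does not prove this theorem at all --- it quotes it from \cite[Theorem A.2.2]{Green} and \cite[\S 2.36]{Paterson} --- so your proposal has to be judged as a free-standing proof, and it is not one. Your frame is legitimate: Zorn's lemma giving a minimal invariant weakly compact convex set $C_0$, the observation that $C_0=\overline{\mathrm{co}}^{\,w}(\Sigma x)$ for every $x\in C_0$, and dentability of weakly compact convex sets are all correct ingredients. (Your preliminary claim that each $T$ is weakly continuous is also true, though your justification via an extension to the affine span is shaky; the clean argument is that the graph of $T$ is convex and norm-closed, hence weakly closed, and a self-map of a weakly compact set with weakly closed graph is weakly continuous.)

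The genuine gap is the contradiction itself, which you describe but never derive, and which your setup cannot deliver as it stands. What minimality plus dentability give you is: for every $x\in C_0$ and every slice $S=\{y\in C_0:\varphi(y)>\alpha\}$, \emph{some} orbit point $Tx$ lies in $S$, because $\sup_{T\in\Sigma}\varphi(Tx)=\sup_{C_0}\varphi$. But to contradict $\mathrm{diam}(C_0)=\delta>0$ you need \emph{two} points at distance comparable to $\delta$ to be carried into the \emph{same} small slice by the \emph{same} $T$; then the isometry hypothesis gives $\delta\le\mathrm{diam}(S)<\delta$. Nothing in your argument produces this simultaneous capture: the orbits of $x$ and $y$ each meet $S$, but possibly via different semigroup elements, and the diagonal orbit hull in $C_0\times C_0$ need not meet $S\times S$. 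This is exactly the difficulty the Namioka--Asplund proof is built to overcome, and it does so by a device absent from your proposal: first reduce, via weak compactness and the weakly closed convex fixed-point sets, to a finitely generated (hence countable) semigroup $\langle T_1,\dots,T_n\rangle$; take a fixed point $x_0$ of the single affine map $T_0=\frac1n(T_1+\cdots+T_n)$ (Ces\`aro averages); then the identity $x_0=\frac1n\sum_{i=1}^n T_ix_0$ propagates under every $T$ to $Tx_0=\frac1n\sum_{i=1}^n TT_ix_0$, and choosing $T$ with $\varphi(Tx_0)>M-\frac1n(M-\alpha)$, where $M$ is the supremum of $\varphi$ on the separable weakly compact convex hull of the countable orbit, forces \emph{all} of the points $TT_ix_0$, together with $Tx_0$, into the slice at once --- contradicting $\|TT_ix_0-Tx_0\|=\|T_ix_0-x_0\|>0$ for some $i$. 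It is this convex-combination identity, preserved along the orbit by affineness, that resolves the simultaneity problem; your text concedes the point (``the main obstacle is precisely this last step''), and without it what you have is a plan whose mathematical heart is missing, not a proof.
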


\begin{theorem}\label{(p,q)-amenable is amenable}
Let $G$ be a locally compact group, and take $p,q$ with $1\le p\le q<\infty$. Then $G$ is amenable if and only if $G$ is left $(p,q)$-amenable.
\end{theorem}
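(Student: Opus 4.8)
The forward implication is essentially already recorded in the chain of implications displayed just before the theorem: if $G$ is amenable, there is a left-invariant mean $\Lambda$, for which the orbit $\set{s\cdot\Lambda\colon s\in G}$ is the singleton $\set{\Lambda}$; repeated use of axiom (A4) gives $\norm{(\Lambda,\ldots,\Lambda)}^{(p,q)}_n=\norm{\Lambda}$ for every $n\in\naturals$, so this orbit is $(p,q)$-multi-bounded and $G$ is left $(p,q)$-amenable. The substance of the theorem is therefore the converse, and the plan is to convert a left $(p,q)$-invariant mean into a genuinely left-invariant one by a fixed-point argument.

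So suppose that $G$ is left $(p,q)$-amenable. Using Lemma \ref{(p,q)-multi-invariance and absolute value}, I may fix a left $(p,q)$-invariant mean $\Lambda$ on $\Linfty(G)$; thus the orbit $O:=\set{s\cdot\Lambda\colon s\in G}$ is $(p,q)$-multi-bounded. Identify $\dual{\Linfty(G)}$ isometrically, as a Banach lattice, with $\Lone(\Omega)$ for a suitable measure space $(\Omega,\mu)$. Then Corollary \ref{multi-bounded and weak compactness in Lone} applies: the orbit $O$ is relatively weakly compact in $\dual{\Linfty(G)}$. Let $K$ be the weakly closed convex hull of $O$. By the Kre\u{\i}n--\u{S}mulian theorem (the closed convex hull of a weakly compact set is weakly compact), $K$ is a convex, weakly compact subset of $\dual{\Linfty(G)}$. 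Moreover each $s\cdot\Lambda$ is again a mean, and the set of all means on $\Linfty(G)$ is convex and norm-closed, hence weakly closed; it follows that every element of $K$ is a mean.

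It remains to produce an invariant point of $K$. For each $s\in G$ the map $\Lambda'\mapsto s\cdot\Lambda'$ is the bidual of the isometric left-translation operator on $\Lone(G)$, so it is a linear isometry of $\dual{\Linfty(G)}$; being bounded and linear it is weakly continuous, and it is affine. Since $G$ is a group, it permutes $O$, and hence maps the convex hull, and (by weak continuity) its weak closure $K$, into $K$. Thus the maps $\Lambda'\mapsto s\cdot\Lambda'$, for $s\in G$, form a semigroup $\Sigma$ of affine maps of $K$ into $K$ with $\norm{s\cdot x-s\cdot y}=\norm{x-y}$ for all $x,y\in K$ and all $s\in G$. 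The Ryll--Nardzewski fixed point theorem, Theorem \ref{RN}, now yields a point $\Lambda_0\in K$ with $s\cdot\Lambda_0=\Lambda_0$ for every $s\in G$. Since $\Lambda_0\in K$ is a mean, it is a left-invariant mean on $\Linfty(G)$, and so $G$ is amenable.

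The crux of the argument is the weak-compactness input: the passage from $(p,q)$-multi-boundedness of the orbit to its relative weak compactness, which is exactly the content of Corollary \ref{multi-bounded and weak compactness in Lone} (itself resting, via Theorem \ref{equivalence of (p,q)-multi-norm on Lone}, on a $p$-summing condition and Pietsch factorization), combined with the Kre\u{\i}n--\u{S}mulian theorem to promote the convex hull to a weakly compact set. Once weak compactness of $K$ is secured, the remaining verifications --- that $K$ consists of means and that the translations act as affine isometries --- are routine, and Ryll--Nardzewski does the rest. One point to watch is the distinction between the weak and weak$^*$ topologies on $\dual{\Linfty(G)}$: the fixed-point machinery must be run in the (norm-)weak topology, which is precisely where the relative weak compactness supplied by Corollary \ref{multi-bounded and weak compactness in Lone} lives.
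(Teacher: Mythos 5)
Your proof is correct and follows essentially the same route as the paper's: deduce relative weak compactness of the orbit from Corollary \ref{multi-bounded and weak compactness in Lone}, pass to the weakly compact convex hull via the Kre\u{\i}n--\u{S}mulian theorem, and apply the Ryll--Nardzewski fixed point theorem (Theorem \ref{RN}) to the isometric affine translation maps. The extra verifications you supply (that $K$ consists of means, that the translations preserve $K$) are fine, and the appeal to Lemma \ref{(p,q)-multi-invariance and absolute value} is harmless though not needed, since the hypothesis already provides a left $(p,q)$-multi-invariant \emph{mean}.
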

\begin{proof}
We need to prove only the `if' part.  

So, suppose that $\Lambda$ is a left $(p,q)$-invariant mean on $\Linfty(G)$; that is $\set{s\cdot\Lambda\colon s\in G}$ is $(p,q)$-multi-bounded in $\dual{\Linfty(G)}$. By Corollary \ref{multi-bounded and weak compactness in Lone} and either 
Lemma \ref{absolutely convex hull and multi-bounded} or the Kre\u{\i}n-\u{S}mulian theorem, the closed convex hull $K$ of $\set{s\cdot\Lambda\colon s\in G}$ is weakly compact. For each $s\in G$, consider the map $L_s:\Psi\mapsto s\cdot\Psi\,,\ K\to K$. We obtain a group $\Sigma : = \set{L_s\colon s\in G}$ of isometric affine maps. By Theorem \ref{RN}, there exists $\Lambda_0\in K$ which is a common fixed point for the set $\set{L_s:\ s\in G}$. Obviously, $\Lambda_0$ must be a left-invariant mean on $\Linfty(G)$. Hence the group $G$ is amenable. 
\end{proof}

\begin{remark}
When $\Linfty(G)$ has a left $(1,1)$-multi-invariant mean $\Lambda$, a left-invariant mean on $\Linfty(G)$ can be explicitly constructed, as follows. Consider $\Lambda$ as an element of the real Banach lattice $\dual{\Linfty_\reals(G)}$. For each finite subset $F=\set{s_1,\ldots, s_n}$ of $G$, we set 
\[
	\Psi_F:= (s_1\cdot\Lambda)\vee\cdots \vee(s_n\cdot\Lambda)\,.
\]
Then we have an upward-directed net of positive linear functionals on $\Linfty(G)$. This net is bounded because $\Lambda$ is left $(1,1)$-multi-invariant, where we use equation \eqref{max, [1,1], lattice multi-norm} ({\it cf.} Remark \ref{(1,1) multi-norm is maximum}), and so its weak$^*$ limit $\Psi$ exists and $\Psi$ must be the supremum of $\set{s\cdot \Lambda\colon s\in G}$. It follows that $\Psi$ is left-invariant.
\end{remark}

\begin{remark} There is an obvious definition of a {\it right} $(p,q)$-{\it multi-invariant mean}. Set $A=\Lone(G)$, and let $\Lambda \in \bidual{A}$ be a left $(p,q)$-multi-invariant mean. Define $\theta: A\rightarrow A$ by 
\[
\theta(a)(s)=a(s^{-1})\Delta(s^{-1})\quad (a \in A, s \in G)\,.
\]
Then $\bidual{\theta}:\bidual{A}\rightarrow \bidual{A}$ takes the set $\set{s\cdot \Lambda: s \in G}$ to the set $\set{\bidual{\theta}(\Lambda)\cdot s: s \in G}$, and $\dual{\theta}(1)=1$. Since $\bidual{\theta}$ automatically belongs  to $\mboperators(\bidual{A}, \bidual{A})$, it follows that $\bidual{\theta}(\Lambda)$ is a right $(p,q)$-multi-invariant mean on $G$.
\end{remark}

\section{Injectivity and flatness of the module $\Lspace^p(G)$}

\label{Multi-norm and homology}

\noindent Let $G$ be a locally compact group, and take $p\in(1,\infty)$. In this section, we shall give an answer to the question of when $\Lspace^p(G)$ is injective and when it is flat in $\Lmod$. Now we shall write $\norm{\cdot}_p$ for the norm on $\Lspace^p(G)$; we {take $q$ to be the conjugate index to $p$.} 

First, we shall prove that the injectivity of $\Lspace^p(G)$ in $\Lmod$ implies the amenability of $G$. For this, we shall use a coretraction problem to show that, in the case where $\Lspace^p(G)$ is injective, $\Linfty(G)$ must have a left $(p,p)$-multi-invariant mean, and then we shall apply the result from the previous section. 

We set $J=\mathcal{B}(\Lone(G),\Lspace^p(G))$. We now define an action of $G$ on the space $J$ by 
\begin{align*}
\left(t * U\right)(a)=t\cdot U(t^{-1} \cdot a)\quad (a \in \Lone(G))\,
\end{align*}
for each $U\in J$ and $t\in G$. For each $U \in J$ and $a \in \Lone(G)$, the map $t\mapsto (t*U)(a),\,\; G\rightarrow \Lspace^p(G),$ is continuous; this follows from the inequality
\begin{align*}
\left\|t\cdot U(t^{-1}\cdot a)-U(a)\right\|_{p}&\leq\left\|t\cdot U(t^{-1}\cdot a)-t\cdot U(a)\right\|_{p}+\left\|t\cdot U(a)-U(a)\right\|_{p}\\
                         &=\left\|U(t^{-1}\cdot a-a)\right\|_{p}+\left\|t\cdot U(a)-U(a)\right\|_{p} \\
                         &\leq \norm{U}\norm{t^{-1}\cdot a-a}_{1}+\left\|t\cdot U(a)-U(a)\right\|_{p}
\end{align*}
and the continuity of translation in $\Lone(G)$ and $\Lspace^p(G)$ \cite[Proposition 3.3.11]{HGD}.

\begin{proposition}
There is a Banach left $\Lone(G)$-module structure on $J$ given by a product $*$\,, where 
\begin{align}\label{new module action on J}
\left(b * U\right)(a)&=\int_{G}b(t)\left(t * U\right)(a) \dd m(t) \quad (a, b \in \Lone(G),\, U \in J)\,.
\end{align}
\end{proposition}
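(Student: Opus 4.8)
The plan is to check, in order, that the formula \eqref{new module action on J} yields a well-defined element $b*U$ of $J$, that the bilinear map $(b,U)\mapsto b*U$ is bounded, and that it is compatible with the convolution product $\star$ on $\Lone(G)$.

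I would first fix $b\in\Lone(G)$ and $U\in J$ and make sense of the integral defining $(b*U)(a)$ for each $a\in\Lone(G)$. The map $t\mapsto(t*U)(a)$ is continuous from $G$ into $\Lspace^p(G)$, as was established above; moreover, since left translation is isometric on both $\Lone(G)$ and $\Lspace^p(G)$, we have
\[
	\norm{(t*U)(a)}_p=\norm{U(t^{-1}\cdot a)}_p\le\norm{U}\,\norm{a}_1\qquad(t\in G)\,.
\]
Hence $t\mapsto b(t)(t*U)(a)$ is a continuous $\Lspace^p(G)$-valued function whose norm is dominated by the integrable function $t\mapsto\abs{b(t)}\,\norm{U}\,\norm{a}_1$, so its (Bochner) integral exists and satisfies
\[
	\norm{(b*U)(a)}_p\le\int_G\abs{b(t)}\,\norm{U}\,\norm{a}_1\dd m(t)=\norm{b}_1\norm{U}\norm{a}_1\,.
\]
(When $p\in(1,\infty)$ one may instead define the integral weakly, using the reflexivity of $\Lspace^p(G)$, which circumvents any measurability issue.) Linearity of $a\mapsto(b*U)(a)$ is inherited from the linearity of $U$, of the translations, and of the integral, so $b*U\in J$ with $\norm{b*U}\le\norm{b}_1\norm{U}$; this is exactly the module bound, with constant $1$. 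Bilinearity of $(b,U)\mapsto b*U$ then follows from the linearity of the integral in $b$ and of the action $t*U$ in $U$.

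The substantive point is the identity $(b_1\star b_2)*U=b_1*(b_2*U)$ for $b_1,b_2\in\Lone(G)$ and $U\in J$. I would first record that $t\mapsto t*U$ is a genuine left action of $G$, that is, $s*(t*U)=(st)*U$; this is a one-line computation from $(t*U)(a)=t\cdot U(t^{-1}\cdot a)$ together with $s\cdot(t\cdot x)=(st)\cdot x$. Evaluating the left-hand side at $a$ and substituting the convolution formula gives the iterated integral $\int_G\int_G b_1(t)b_2(t^{-1}s)(s*U)(a)\dd m(t)\dd m(s)$; the change of variable $s=tr$, valid by the left-invariance of Haar measure, turns this into $\int_G\int_G b_1(t)b_2(r)((tr)*U)(a)\dd m(r)\dd m(t)$. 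For the right-hand side I would expand $(b_1*(b_2*U))(a)=\int_G b_1(t)\,t\cdot(b_2*U)(t^{-1}\cdot a)\dd m(t)$, pull the bounded operator $x\mapsto t\cdot x$ through the inner integral defining $b_2*U$, and apply the action identity in the form $t\cdot[(r*U)(t^{-1}\cdot a)]=((tr)*U)(a)$; this produces the very same iterated integral.

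The main obstacle is the measure-theoretic bookkeeping in this last step: one must justify that the bounded translation operator commutes with the vector-valued integral and that the order of integration may be interchanged by a vector-valued Fubini theorem. Both are licensed by the finiteness of the double norm integral, since $\norm{((tr)*U)(a)}_p=\norm{U((tr)^{-1}\cdot a)}_p\le\norm{U}\norm{a}_1$ yields
\[
	\int_G\int_G\abs{b_1(t)}\,\abs{b_2(r)}\,\norm{((tr)*U)(a)}_p\dd m(r)\dd m(t)\le\norm{b_1}_1\norm{b_2}_1\norm{U}\norm{a}_1<\infty\,.
\]
Once these standard facts are in place the two iterated integrals coincide for every $a$, so the module identity holds; combined with the boundedness and bilinearity already obtained, this shows that $(J,*)$ is a Banach left $\Lone(G)$-module.
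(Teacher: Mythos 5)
Your argument is correct, but it runs along a different technical track from the paper's. Where you construct $(b*U)(a)$ as a Bochner integral of the continuous, norm-bounded function $t\mapsto b(t)(t*U)(a)$ and then verify associativity for all $b_1,b_2\in\Lone(G)$ at once via a vector-valued Fubini theorem and the group-action identity $s*(t*U)=(st)*U$, the paper avoids vector-valued integration altogether: it defines $(b*U)(a)$ weakly, by pairing against test functions $\psi\in\C_{00}(G)$, using H\"older's inequality and scalar Fubini to show that $\psi\mapsto\int_G (b*U)(a)(s)\psi(s)\dd m(s)$ is a bounded functional on $\Lspace^q(G)$, hence an element of $\Lspace^p(G)=\dual{\Lspace^q(G)}$; and it establishes associativity only for $a,b\in\C_{00}(G)$, extending to $\Lone(G)$ by density and the bound $\norm{b*U}\le\norm{b}_1\norm{U}$. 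Your route is more conceptual (it exhibits $*$ as the integrated form of a strongly continuous isometric $G$-action on $J$, which is really what is going on), at the price of invoking strong measurability of the integrand and a vector-valued Fubini theorem; the former needs the small observation that $b$ is concentrated on a $\sigma$-compact set so that the continuous integrand is essentially separably valued — your parenthetical fallback to a weak integral, which is in effect what the paper does, covers this. Both proofs yield the same norm estimate $\norm{b*U}\le\norm{b}_1\norm{U}$, and both are complete.
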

\begin{proof}
This is similar to the proof that $\Lspace^{p}(G)$ is a left $\Lone(G)$-module \cite[Theorem 3.3.19]{HGD}. 

Fix $U \in J$ and $a, b \in \Lone(G)$, and let $\psi \in C_{00}(G)$. By H\"{o}lder's inequality, we have
\[
\int_{G}\left|U(t^{-1}\cdot a)(t^{-1}s)\right|\left|\psi(s)\right| \dd m(s)\leq \norm{t\cdot U(t^{-1}\cdot a)}_{p}\norm{\psi}_{q}\leq \norm{U}\norm{a}_{1}\norm{\psi}_{q}
\]
for each $t \in G$. Now define $\Lambda(\psi)$ for $\psi\in \C_{00}(G)$ by
\begin{align*}
\Lambda(\psi)&=\int_{G}(b * U)(a)(s)\psi(s) \dd m(s) \\
&=\int_{G}\left(\int_{G}b(t)U(t^{-1}\cdot a)(t^{-1}s) \dd m(t)\right)\psi(s) \dd m(s)\\
                         &=\int_{G}b(t)\left(\int_{G}U(t^{-1}\cdot a)(t^{-1}s)\psi(s) \dd m(s)\right) \dd m(t)\,.
\end{align*}
Then 
\[
	\left|\Lambda(\psi)\right|\leq \norm{b}_{1}\norm{U}\norm{a}_{1}\norm{\psi}_{q}\,,
\]
and so $\Lambda$ extends to an element of $\dual{\Lspace^{q}(G)}$ of norm at most $\norm{b}_{1}\norm{U}\norm{a}_{1}$. Hence, by the identification of $\dual{\Lspace^{q}(G)}$ with $\Lspace^{p}(G)$, we see that $(b*U)(a) \in \Lspace^{p}(G)$ with
$$
\left\Vert (b*U)(a) \right\Vert \leq \norm{b}_{1}\norm{U}\norm{a}_{1}\,,
$$
and so $b*U \in J$ with $\norm{b* U}\leq \norm{b}_{1}\norm{U}$.

The associativity formula $a * (b* U)=(a*b)*U$ holds for $a,b\in C_{00}(G)$ and $U\in J$, and so it holds for all $a,b\in \Lone(G)$ because $C_{00}(G)$ is dense in $\Lone(G)$.
\end{proof}

We shall denote the above left $\Lone(G)$-module by $\widetilde{J}=(J, \,*\,)$. (We could similarly define a right multiplication on  $J$ such that $J$ becomes a Banach $\Lone(G)$-bimodule.)

Now we define an embedding $\widetilde{\Pi} :\Lspace^p(G) \rightarrow \widetilde{J}$ by
\[
(\widetilde{\Pi}x)(a)=\varphi_{G}(a)x\quad (a \in \Lone(G))\,,
\]
where $x\in\Lspace^p(G)$  and $\varphi_G$ is the augmentation character on $\Lone(G)$. 
 Certainly $\widetilde{\Pi} \in{\operators}(\Lspace^p(G),\widetilde{J})$. For each $b \in \Lone(G)$, we have
\[
\left(b * \widetilde{\Pi}x\right)(a)=\int_G b(t)\varphi_G(t^{-1}\cdot a)\,t\cdot x \dd m(t)=\varphi_G(a)b\star x=\widetilde{\Pi}(b\star x)(a)\quad (a \in \Lone(G))\,,
\]
and so $\widetilde{\Pi}$ is a left $\Lone(G)$-module morphism; further, $\widetilde{\Pi}$ is admissible (a left inverse of $\widetilde{\Pi}$ 
in the category of Banach spaces is the map $U \mapsto U(a_{0})$ for any $a_{0} \in \Lone(G)$ with $\varphi_{G}(a_{0})=1$).

\begin{proposition}\label{a fancy coretract problem} 
Let $G$ be a locally compact group, and take $p\in (1,\infty)$. Suppose that $\Lspace^{p}(G)$ is injective in $\Lmod$. Then the morphism 
$\widetilde{\Pi}$ is a coretraction in $\Lmod$. 
\end{proposition}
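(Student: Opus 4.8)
The plan is to deduce the result as an immediate application of the definition of injectivity (Definition \ref{1.1b}), with the injective module there taken to be $\Lspace^p(G)$ itself. All the substantive work has in fact already been carried out in the constructions preceding the statement: we have built the Banach left $\Lone(G)$-module $\widetilde{J}=(J,\,*\,)$ and the morphism $\widetilde{\Pi}\in\AB(\Lspace^p(G),\widetilde{J})$, and we have checked that $\widetilde{\Pi}$ is an admissible monomorphism in $\Lmod$.

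Concretely, I would instantiate Definition \ref{1.1b} with the injective module taken to be $\Lspace^p(G)$, and with
\[
	E=\Lspace^p(G)\,,\quad F=\widetilde{J}\,,\quad T=\widetilde{\Pi}\,,\quad S=\id_{\Lspace^p(G)}\,.
\]
Here $T=\widetilde{\Pi}$ is an admissible monomorphism in $\Lmod$ (established above: it is a module morphism, and the Banach-space left inverse $U\mapsto U(a_0)$, where $a_0\in\Lone(G)$ satisfies $\varphi_G(a_0)=1$, both witnesses admissibility and forces injectivity, since $(\widetilde{\Pi}x)(a_0)=x$), while $S=\id_{\Lspace^p(G)}$ is trivially a morphism in $\AB(E,\Lspace^p(G))$. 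The injectivity of $\Lspace^p(G)$ then yields $R\in\AB(\widetilde{J},\Lspace^p(G))$ with $R\circ\widetilde{\Pi}=R\circ T=S=\id_{\Lspace^p(G)}$.

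This last identity is precisely the assertion that $\widetilde{\Pi}$ is a coretraction in $\Lmod$, that is, that it admits a left inverse $R$ in the module category and not merely in the category of Banach spaces; this completes the argument. There is no genuine obstacle at this step: the difficulty is entirely front-loaded into the earlier verification that $\widetilde{J}$ carries a Banach left $\Lone(G)$-module structure and that $\widetilde{\Pi}$ is an admissible module monomorphism. The only point requiring care is to apply the definition of injectivity with the non-canonical pair $(F,T)=(\widetilde{J},\widetilde{\Pi})$ rather than with the canonical embedding $\Pi$ of Proposition \ref{2.2}; it is exactly the freedom to choose an arbitrary admissible monomorphism in the definition that makes $\widetilde{\Pi}$ a coretraction and thereby supplies the module retraction $R:\widetilde{J}\to\Lspace^p(G)$ used in the sequel.
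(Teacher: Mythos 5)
Your proposal is correct and is exactly the argument the paper intends: the paper's proof consists of the single line ``This follows immediately from the definition of injectivity,'' relying on the preceding verification that $\widetilde{\Pi}$ is an admissible module monomorphism, and your instantiation of Definition \ref{1.1b} with $E=\Lspace^p(G)$, $F=\widetilde{J}$, $T=\widetilde{\Pi}$, and $S=\id_{\Lspace^p(G)}$ simply makes that explicit.
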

\begin{proof}
This follows immediately from the definition of injectivity.
\end{proof} 

The converse of the above proposition is also true, as can be seen from the proof of Theorem \ref{injectivity and amenability}, below.

We shall need the following generalization of \cite[Lemma 5.2]{DP}. 

In the next three results, we suppose that   $\Omega$ is a measure space, and take $p\in (1,\infty)$; the norm on $L^p(\Omega)$ is $\norm{\cdot}_p$.

For $n \in \naturals$, we set $D_n=\set{-1, 1}^n$,
and, for $j \in \naturals_n$, we set
\[
D^+_n(j)=\set{(d_{1},\ldots, d_{n}) \in D_n: d_j=1},\quad D^-_n(j)=\set{(d_{1},\ldots, d_{n}) \in D_n: d_j=-1}\,.
\]

\begin{lemma}\label{5.2.3} 
Let $n\in \naturals$,  and suppose that $F: \naturals_{n}\times \naturals_{n}\rightarrow \Lspace^p(\Omega)$ is a function. Set 
\[
C= {\max}\set{\left(\sum_{j=1}^{n}\flexiblenorm{\sum_{i=1}^{n}d_{i}F(i, j)}_p^{p}\right)^{1/p}:(d_1,\dots,d_n)\in D_n}\,.
\]
Then $\sum_{j=1}^{n}\norm{F(j,j)}_p^p\leq C^p$.
\end{lemma}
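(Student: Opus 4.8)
The plan is to use a random-signs averaging argument that isolates the diagonal terms $F(j,j)$ by pairing sign vectors. First I would observe that, writing
\[
	\Phi(\tuple{d}) := \sum_{j=1}^n \flexiblenorm{\sum_{i=1}^n d_i F(i,j)}_p^p \qquad (\tuple{d}=(d_1,\ldots,d_n)\in D_n)\,,
\]
the definition of $C$ says precisely that $\Phi(\tuple{d}) \le C^p$ for every $\tuple{d}\in D_n$. Consequently the average over all $2^n$ sign vectors satisfies $2^{-n}\sum_{\tuple{d}\in D_n}\Phi(\tuple{d}) \le C^p$, so it suffices to prove that this average is at least $\sum_{j=1}^n \norm{F(j,j)}_p^p$.

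The key ingredient is the elementary scalar inequality
\[
	\abs{u+v}^p + \abs{u-v}^p \ge 2\abs{v}^p \qquad (u,v\in\complexs)\,,
\]
valid for every $p\ge 1$. I would prove it by noting that the function $h\colon u\mapsto \abs{u+v}^p+\abs{u-v}^p$ on $\complexs$ is convex (since $t\mapsto t^p$ is convex and increasing on $[0,\infty)$ and $\abs{\,\cdot\,}$ is a norm on $\complexs\cong\reals^2$) and even, so that $h(0)=h\big(\tfrac12(u+(-u))\big)\le \tfrac12\big(h(u)+h(-u)\big)=h(u)$; since $h(0)=2\abs{v}^p$, the inequality follows. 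Applying this pointwise on $\Omega$, with $v$ the diagonal value and $u$ the off-diagonal value, and then integrating yields the $\Lspace^p$-version: for any fixed signs $d_i\,(i\neq j)$, setting $a=\sum_{i\neq j}d_iF(i,j)$ and $b=F(j,j)$, we have $\norm{a+b}_p^p+\norm{a-b}_p^p\ge 2\norm{F(j,j)}_p^p$.

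To combine these, I would, for each fixed $j$, pair each $\tuple{d}\in D_n$ with the vector $\tuple{d}'$ obtained by flipping the sign $d_j$. For such a pair, $\sum_i d_iF(i,j)$ and $\sum_i d_i'F(i,j)$ are exactly $a+F(j,j)$ and $a-F(j,j)$, so the two corresponding summands contribute at least $2\norm{F(j,j)}_p^p$. Summing over the $2^{n-1}$ such pairs and dividing by $2^n$ gives
\[
	2^{-n}\sum_{\tuple{d}\in D_n}\flexiblenorm{\sum_{i=1}^n d_i F(i,j)}_p^p \ge \norm{F(j,j)}_p^p \qquad (j\in\naturals_n)\,.
\]
Summing over $j$ and comparing with the upper bound $2^{-n}\sum_{\tuple{d}}\Phi(\tuple{d})\le C^p$ then yields $\sum_{j=1}^n \norm{F(j,j)}_p^p \le C^p$, as required.

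The only point that genuinely needs care is verifying the scalar inequality for \emph{complex} $u,v$; the convexity-plus-evenness argument does handle this, since $\abs{\,\cdot\,}^p$ is convex on $\reals^2$, and no reality of scalars is used. Everything after that is routine bookkeeping with the sign-flip pairing, so I expect this step to be the crux.
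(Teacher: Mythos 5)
Your proof is correct and follows essentially the same route as the paper's: average the hypothesis over all $2^n$ sign vectors, pair each sign vector with the one obtained by flipping the $j$-th sign, and use the convexity-based inequality $\norm{a+b}_p^p+\norm{a-b}_p^p\ge 2\norm{b}_p^p$ to extract the diagonal terms. The only cosmetic difference is that you establish this inequality pointwise on $\Omega$ and integrate, whereas the paper applies the triangle inequality and convexity of $t\mapsto t^p$ directly at the level of the $\Lspace^p$-norms.
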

\begin{proof} 
Let $d=(d_1,\dots,d_n)\in D_n$, and set $x_{j,d}=\sum^n_{i=1}d_i F(i,j)\;\,(j\in \naturals_n)$. By hypothesis, we have $\sum^n_{j=1}\norm{x_{j,d}}_p^p\leq C^{p}$.
Since there are $2^n$ elements in $D_n$, we have
\[
\sum^n_{j=1}\sum_{d\in D_n}\norm{x_{j,d}}_p^p \leq 2^nC^{p}\,.
\]
For each $j \in \naturals_n$, write $M_j$ for the set of the maps from $\naturals_n\setminus\set{j}$ to $\set{-1,1}$. Then we can write the term 
$ \sum_{d\in D_n}\norm{x_{j,d}}_p^p$ as
\begin{align*}
\sum_{d\in D_n}\norm{x_{j,d}}_p^p &=\sum_{d \in D^+_{n}(j)}\norm{x_{j,d}}_p^{p}+\sum_{d \in D^-_{n}(j)}\norm{x_{j,d}}_p^{p}  \\
           &= \sum_{d \in M_j}\left(\flexiblenorm{\sum_{i\neq j}d_{i}F(i,j)+F(j,j)}_p^{p}+ \flexiblenorm{\sum_{i\neq j}d_{i}F(i,j)-F(j,j)}_p^{p}\right)\\
           &\geq \sum_{d \in M_j}2\norm{F(j,j)}_p^{p} 
           =2^{n}\norm{F(j,j)}_p^{p}\,;
\end{align*}
here, we are using the fact that the function $t\mapsto t^p$ is increasing and convex on $\reals^+$. 
This holds for each $j \in \naturals_n$, and so, summing over $j$, we see that
\[
2^n\sum^n_{j=1}\norm{F(j,j)}_p^p\leq\sum^n_{j=1}\sum_{d\in D_n}\norm{x_{j,d}}_p^p\leq 2^nC^p\,.
\]
Hence we have $\sum^n_{j=1}\norm{F(j,j)}_p^p\leq C^p$, and the result follows.
\end{proof}

For a measurable subset $V$ of $\Omega$  and $U \in \operators(\Lone(\Omega), \Lspace^{\,p}(\Omega))$, we define 
$\chi_{V}U \in \operators(\Lone(\Omega), \Lspace^{\,p}(\Omega))$ by the formula 
\[
(\chi_{V}U)(a)(s)=\chi_{V}(s)U(a)(s)\quad (a \in \Lone(G),\, s \in G)\,.
\]

\begin{proposition}\label{5.3.4} 
 Let  $\set{X_{1},\ldots, X_n}$ and $\set{Y_1,\ldots, Y_n}$ be measurable partitions of $\Omega$. Then, for each bounded linear operator  $R:\operators(\Lone(\Omega), \Lspace^{\,p}(\Omega))\rightarrow \Lspace^{\, p}(\Omega)$, we have
\[
\left(\sum_{i=1}^{n}\norm{\chi_{X_{i}}R(\chi_{Y_{i}}U)}^{p}_{p}\right)^{1/p} \leq \norm{R}\norm{U}\qquad (U \in \operators(\Lone(\Omega), \Lspace^{\,p}(\Omega)))\,.
\]
\end{proposition}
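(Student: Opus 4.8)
The plan is to deduce the inequality directly from Lemma \ref{5.2.3} by feeding it a cleverly chosen function $F$. Specifically, I would define
\[
F(i,j) = \chi_{X_j} R(\chi_{Y_i} U) \in \Lspace^p(\Omega) \qquad (i, j \in \naturals_n)\,,
\]
so that the diagonal terms $F(j,j) = \chi_{X_j} R(\chi_{Y_j} U)$ are precisely the quantities appearing on the left-hand side of the assertion; thus $\sum_{j=1}^n \norm{F(j,j)}_p^p$ equals the $p$-th power of the left-hand side, and it suffices to estimate the constant $C$ of Lemma \ref{5.2.3}.

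To that end, fix a sign vector $d = (d_1,\ldots,d_n) \in D_n$. Linearity of $R$ and of the multiplication operator $\chi_{X_j}$ gives
\[
\sum_{i=1}^n d_i F(i,j) = \chi_{X_j}\, R\!\left(\sum_{i=1}^n d_i\, \chi_{Y_i} U\right) = \chi_{X_j}\, R(U_d)\,,
\]
where I write $U_d = \sum_{i=1}^n d_i\, \chi_{Y_i} U \in \operators(\Lone(\Omega), \Lspace^p(\Omega))$. The crucial observation is that $\norm{U_d} = \norm{U}$: since $\set{Y_1,\ldots,Y_n}$ is a partition of $\Omega$ and $\abs{d_i} = 1$, for each $a \in \Lone(\Omega)$ exactly one $\chi_{Y_i}(s)$ is non-zero at almost every $s$, whence $\abs{U_d(a)(s)} = \abs{U(a)(s)}$ and so $\norm{U_d(a)}_p = \norm{U(a)}_p$. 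Using now that $\set{X_1,\ldots,X_n}$ is a partition, so that $\sum_{j=1}^n \norm{\chi_{X_j} g}_p^p = \norm{g}_p^p$ for every $g \in \Lspace^p(\Omega)$, I obtain
\[
\sum_{j=1}^n \flexiblenorm{\sum_{i=1}^n d_i F(i,j)}_p^p = \sum_{j=1}^n \norm{\chi_{X_j} R(U_d)}_p^p = \norm{R(U_d)}_p^p \le \norm{R}^p \norm{U_d}^p = \norm{R}^p \norm{U}^p\,.
\]
Taking the maximum over $d \in D_n$ yields $C \le \norm{R}\,\norm{U}$.

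Finally, Lemma \ref{5.2.3} gives $\sum_{j=1}^n \norm{F(j,j)}_p^p \le C^p \le \norm{R}^p \norm{U}^p$, which is the claim after taking $p$-th roots. The only genuinely inventive step is the asymmetric choice of $F$, placing the $X$-projection on the second index and the $Y$-projection on the first so that the off-diagonal sum collapses, via $R$ and the two partition identities, into a single operator norm estimate. Once this is fixed, the sign-invariance $\norm{U_d} = \norm{U}$ and the additivity over the $X$-partition are routine, and Lemma \ref{5.2.3} supplies the combinatorial content; I do not anticipate any real obstacle beyond this bookkeeping.
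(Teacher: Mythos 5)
Your proposal is correct and is essentially identical to the paper's own proof: the same choice $F(i,j)=\chi_{X_j}R(\chi_{Y_i}U)$, the same collapse of the inner sum into $\chi_{X_j}R(U_d)$, the same use of the two partition identities, and the same appeal to Lemma \ref{5.2.3}. Your explicit verification that $\norm{U_d}=\norm{U}$ is a small detail the paper leaves implicit, but otherwise there is nothing to add.
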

\begin{proof}
Take $U\in\operators(\Lone(\Omega),\Lspace^p(\Omega))$, and define $F:\naturals_{n}\times \naturals_{n}\rightarrow \Lspace^{\, p}(\Omega)$ by
\[
F(i, j)=\chi_{X_{j}}R(\chi_{Y_{i}}U)\quad (i, j \in \naturals_{n})\,.
\]
For each $(d_{1},\ldots, d_n) \in D_n$, we have
\begin{align*}
\sum_{j=1}^{n}\flexiblenorm{\sum_{i=1}^{n}d_{i}F(i,j)}^{p}_p&=\sum_{j=1}^{n}\flexiblenorm{\sum_{i=1}^{n}d_{i}\chi_{X_{j}}R(\chi_{Y_{i}}U)}^{p}_p=
\sum_{j=1}^{n}\flexiblenorm{\chi_{X_{j}}R\left(\sum_{i=1}^{n}d_{i}\chi_{Y_{i}}U\right)}^{p}_p \\
&=\flexiblenorm{R\left(\sum_{i=1}^{n}d_{i}\chi_{Y_{i}}U\right)}^{p}_p\leq \flexiblenorm{R}^{p}\flexiblenorm{\sum_{i=1}^{n}d_{i}\chi_{Y_{i}}U}^{p}= \norm{R}^{p}\norm{U}^{p}\,,
\end{align*}
and so, by Lemma \ref{5.2.3}, we have
\begin{equation*}
\left(\sum_{j=1}^{n}\norm{F(j, j)}^{p}_p\right)^{1/p}=\left(\sum_{j=1}^{n}\norm{\chi_{X_{j}}R(\chi_{Y_{j}}U)}^{p}_p\right)^{1/p}\leq \norm{R}\norm{U}\,,
\end{equation*}
which gives the result.
\end{proof}

In the following result, we are regarding $\sum_{i=1}^n\dual{U}(f_i)\otimes x_i$ as a finite--rank operator from $\Lone(\Omega)$ to $\Lspace^p(\Omega)$ by using equation
 \eqref{tensor form of finite-rank}. We recall that $q$ is the conjugate index to $p$.

\begin{lemma}\label{5.3.5} 
Let $U \in \operators(\Lone(\Omega), \Lspace^{\, p}(\Omega))$, let $f_1,\ldots, f_n \in \Lspace^{\, q}(\Omega)$ have pairwise--disjoint supports, and let 
$x_1,\ldots, x_n \in \Lspace^{\, p}(\Omega)$ have pairwise--disjoint supports. Set
\[
T=\sum_{i=1}^n\dual{U}(f_i)\otimes x_i: \Lone(\Omega)\rightarrow \Lspace^{\, p}(\Omega)\,.
\]
Then $T \in \operators(\Lone(\Omega), \Lspace^{\, p}(\Omega))$ and $\norm{T}\leq \norm{U}\max\set{\norm{f_i}_{q}\norm{x_i}_p: i \in \naturals_n}$.
\end{lemma}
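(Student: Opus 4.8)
The plan is to reduce everything to a direct $\Lspace^p$-norm computation, exploiting the two disjointness hypotheses at two separate points. First I would rewrite the operator $T$ in a more usable form. Since $\duality{a}{\dual{U}(f_i)}=\duality{U(a)}{f_i}$ for each $a\in\Lone(\Omega)$, equation \eqref{tensor form of finite-rank} gives
\[
	T(a)=\sum_{i=1}^n\duality{U(a)}{f_i}\,x_i\qquad(a\in\Lone(\Omega))\,.
\]
As a finite sum of rank-one operators, $T$ is automatically bounded, so all the content lies in the norm estimate. Set $M=\max\set{\norm{f_i}_q\norm{x_i}_p:i\in\naturals_n}$ and $S_i=\support f_i$ for $i\in\naturals_n$; by hypothesis the sets $S_i$ are pairwise disjoint, as are the supports of the $x_i$.

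Next I would fix $a\in\Lone(\Omega)$ and use the disjointness of the supports of the $x_i$ to turn the $\Lspace^p$-norm of $T(a)$ into a genuine $\ell^p$-sum. Since the functions $x_i$ live on disjoint sets,
\[
	\norm{T(a)}_p^p=\flexiblenorm{\sum_{i=1}^n\duality{U(a)}{f_i}\,x_i}_p^p=\sum_{i=1}^n\abs{\duality{U(a)}{f_i}}^p\norm{x_i}_p^p\,.
\]
For each $i$, because $f_i=\chi_{S_i}f_i$ I can localise before applying H\"older's inequality to the duality between $\Lspace^p(\Omega)$ and $\Lspace^q(\Omega)$, obtaining
\[
	\abs{\duality{U(a)}{f_i}}=\abs{\duality{\chi_{S_i}U(a)}{f_i}}\le\norm{\chi_{S_i}U(a)}_p\,\norm{f_i}_q\,.
\]

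Finally I would combine these two estimates with the bound $\norm{f_i}_q^p\norm{x_i}_p^p\le M^p$ and then invoke the disjointness of the $S_i$ a second time: since the sets $S_i$ are pairwise disjoint,
\[
	\norm{T(a)}_p^p\le M^p\sum_{i=1}^n\norm{\chi_{S_i}U(a)}_p^p=M^p\,\flexiblenorm{\chi_{\bigcup_i S_i}U(a)}_p^p\le M^p\norm{U(a)}_p^p\le M^p\norm{U}^p\norm{a}_1^p\,.
\]
Taking $p$-th roots and then the supremum over $a\in\Lone(\Omega)_{[1]}$ yields $\norm{T}\le M\norm{U}$, which is precisely the claimed inequality. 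The argument involves no serious obstacle; the only point requiring care is to keep track of which disjointness hypothesis is used where---the disjointness of the $x_i$ is what makes the outer $\Lspace^p$-norm split additively, while the disjointness of the $f_i$ is what allows the localised H\"older estimates to reassemble into at most $\norm{U(a)}_p$ rather than $n$ separate copies of it.
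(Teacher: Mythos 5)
Your proof is correct and follows essentially the same route as the paper's: split the $\Lspace^p$-norm using the disjoint supports of the $x_i$, localise each pairing to $\support f_i$ before applying H\"older, and reassemble the localised norms into $\norm{U(a)}_p$ using the disjointness of the $f_i$. No differences worth noting.
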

\begin{proof}
Set $X_i=\support f_i\,\; (i \in \naturals_n)$ and  $C=\max\set{\norm{f_i}_{q}\norm{x_i}_p: i \in \naturals_n}$. For each $a \in \Lone(\Omega)$, we have
\begin{align*}
\norm{Ta}_p^p&=\flexiblenorm{\sum_{i=1}^n\duality{Ua}{f_i} x_i}_p^p=\sum_{i=1}^n\abs{\duality{ Ua}{f_i}}^p \norm{x_i}_p^p \\
&\leq\sum_{i=1}^n\norm{\chi_{X_i}U(a)}^p_p\norm{f_i}_{q}^p\norm{x_i}_p^p \leq C^p\norm{\chi_{X_1\cup\cdots\cup X_n}U(a)}_p^p\leq C^p\norm{Ua}_p^p\,.
\end{align*}
Therefore $\norm{Ta}_p\leq C\norm{Ua}_p$, and the result follows.
\end{proof}

In the theorem below, we shall use the following identity. For each $x \in \Lspace^{\, p}(G)$, $\lambda \in \Linfty(G)$, and $s \in G$, we have
\begin{equation}\label{eq4.3}
(\lambda\cdot s)\otimes x=s^{-1}*\left[\lambda\otimes (s\cdot x)\right]\,.
\end{equation}

\begin{theorem}\label{injectivity and amenability} 
Let $G$ be a locally compact group, and take $p\in(1,\infty)$. Then $\Lspace^{\, p}(G)$ is injective in $\Lmod$ if and only if $G$ is amenable.
\end{theorem}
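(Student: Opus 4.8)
The plan is to prove the two implications separately, the forward one being routine and the converse carrying the substance. For ``$G$ amenable $\Rightarrow\Lspace^p(G)$ injective'', I recall that for $p\in(1,\infty)$ the left module $\Lspace^p(G)$ is the dual of the right module $\Lspace^q(G)$, and that, by Johnson's theorem, amenability of $G$ makes $\Lone(G)$ an amenable Banach algebra; the theorem of Helemskii quoted in \S2.2 (every dual module over an amenable algebra is injective) then gives injectivity. So I would concentrate on the converse. Assume $\Lspace^p(G)$ is injective; by Proposition~\ref{a fancy coretract problem}, $\widetilde\Pi$ is a coretraction, so there is a left $\Lone(G)$-module morphism $R\colon\widetilde J\to\Lspace^p(G)$ with $R\circ\widetilde\Pi=\id$. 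The aim is to extract from $R$ a non-zero left $(p,p)$-multi-invariant functional on $\Linfty(G)$; the theorem then follows from Lemma~\ref{(p,q)-multi-invariance and absolute value} and Theorem~\ref{(p,q)-amenable is amenable} with $q=p$. Two preliminary facts are needed: since $b*U=\int_G b(t)(t*U)\dd m(t)$ and $b\star g=\int_G b(t)(t\cdot g)\dd m(t)$, a density-and-continuity argument upgrades the module identity $R(b*U)=b\star R(U)$ to the genuine equivariance $R(s*U)=s\cdot R(U)$ for $s\in G$; and $G$ may be assumed infinite (finite groups are amenable), so there are relatively compact neighbourhoods $V$ of $e$ with $0<m(V)<\infty$.

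For each such $V$, set $x_V=\chi_V/m(V)^{1/p}\in\Lspace^p(G)$ and $\xi_V=\chi_V/m(V)^{1/q}\in\Lspace^q(G)=\dual{\Lspace^p(G)}$, and define $\Lambda_V\in\dual{\Linfty(G)}$ by $\Lambda_V(\lambda)=\duality{R(\lambda\otimes x_V)}{\xi_V}$. Because $1\otimes x_V=\widetilde\Pi x_V$, we have $\Lambda_V(1)=\duality{x_V}{\xi_V}=1$ and $\norm{\Lambda_V}\le\norm R$; let $\Lambda$ be a weak$^*$ cluster point of $(\Lambda_V)$ along the neighbourhoods directed by reverse inclusion, so that $\Lambda\neq0$. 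Rewriting \eqref{eq4.3} through $\lambda\cdot s=s^{-1}\cdot\lambda$ and using the equivariance of $R$, one computes $(s\cdot\Lambda_V)(\lambda)=\duality{R(\lambda\otimes(s\cdot x_V))}{s\cdot\xi_V}$, where $s\cdot x_V$ and $s\cdot\xi_V$ are unit vectors supported on $sV$. By Lemma~\ref{weak (p,q)-multi-norm on dual space} together with \eqref{weak p-summing norm on dual space}, $\norm{(s_1\cdot\Lambda,\dots,s_n\cdot\Lambda)}^{(p,p)}_n$ equals the supremum over $\tuple{\lambda}$ with $\mu_{p,n}(\tuple{\lambda})\le1$ of $\bigl(\sum_i\abs{(s_i\cdot\Lambda)(\lambda_i)}^p\bigr)^{1/p}$.

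The decisive estimate is made at the level of $\Lambda_V$. I may assume $s_1,\dots,s_n$ distinct, repeated entries being absorbed by axiom (A4); then for $V$ small enough the translates $s_1V,\dots,s_nV$ are pairwise disjoint. For such $V$, since $s_i\cdot\xi_V$ lives on $s_iV$, I bound $\abs{(s_i\cdot\Lambda_V)(\lambda_i)}\le\norm{\chi_{s_iV}R(\lambda_i\otimes(s_i\cdot x_V))}_p$; putting $U=\sum_i\lambda_i\otimes(s_i\cdot x_V)$, disjointness gives $\chi_{s_iV}U=\lambda_i\otimes(s_i\cdot x_V)$ and $\norm U=\mu_{p,n}(\tuple{\lambda})\le1$, so Proposition~\ref{5.3.4}, applied to $R$ with the partition $\set{s_iV}$ completed to a measurable partition of $G$, yields $\sum_i\abs{(s_i\cdot\Lambda_V)(\lambda_i)}^p\le\norm R^p$. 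Choosing $V$ simultaneously disjoint-translating and close to $\Lambda$ on the finite set $\set{s_i^{-1}\cdot\lambda_i}$ transfers this bound to the cluster point $\Lambda$, whence $\multibound_{p,p}\set{s\cdot\Lambda\colon s\in G}\le\norm R<\infty$, so $\Lambda$ is left $(p,p)$-multi-invariant and the proof concludes as above.

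I expect the main obstacle to be precisely this last reduction. Proposition~\ref{5.3.4} applies cleanly only when $s_1V,\dots,s_nV$ are disjoint, which holds only once $V$ is small relative to the given finite configuration, whereas a single invariant functional $\Lambda$ must serve all configurations at once; reconciling these is exactly what forces the weak$^*$ cluster-point construction over shrinking $V$. Getting the equivariance $R(s*U)=s\cdot R(U)$ and the support bookkeeping in the passage to the limit correct is the delicate technical point, but no further ideas beyond those assembled in \S\S\ref{The extension of the standard q-multi-norm}--\ref{Multi-norm and homology} should be required.
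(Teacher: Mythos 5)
Your proposal is correct and follows essentially the same route as the paper: the same coretraction $R$, the same functionals $\Lambda_V$ (up to normalisation), the same disjoint translates $s_iV$ fed into Proposition~\ref{5.3.4}, a weak$^*$ cluster point, and the Ryll--Nardzewski argument via Lemma~\ref{(p,q)-multi-invariance and absolute value} and Theorem~\ref{(p,q)-amenable is amenable}. The only (cosmetic) difference is that you access the $(p,p)$-multi-norm of $(s_1\cdot\Lambda,\dots,s_n\cdot\Lambda)$ directly through Lemma~\ref{weak (p,q)-multi-norm on dual space}, building the rank-$n$ operator $U=\sum_i\lambda_i\otimes(s_i\cdot x_V)$ from the test functionals, whereas the paper routes through Proposition~\ref{4.6.21} and Lemma~\ref{4.6.19}; by Lemma~\ref{4.6.19} these are the same supremum over the same operators, so the two arguments coincide.
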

\begin{proof}
It is well-known that, if $G$ is amenable, then $\Lspace^{\, p}(G)$ is injective: by Johnson's theorem \cite{Johnson72}, $\Lone(G)$ is an amenable Banach algebra, 
and explicitly by \cite[VII.2.29]{Helemskii86} $\dual{E}$ is injective for each $E\in\modL$. Hence $\Lspace^p(G)$ is injective. Thus we need to consider only the converse.
So  we suppose that $\Lspace^{\, p}(G)$ is injective in $\Lmod$; we may also suppose that $G$ is infinite.

Recall that we are setting  
$J=\operators(\Lone(G), \Lspace^p(G))$ and $q=p'$. By Proposition \ref{a fancy coretract problem}, there is a morphism $R \in {}_{\Lone(G)}\operators(\widetilde{J}, \Lspace^p(G))$
 with $R\circ \widetilde{\Pi}=\id_{\Lspace^p(G)}$. For each compact  subset $V$ of $G$ with $m(V)>0$, we define a linear functional $\Lambda_{V}$ on $\Linfty(G)$ by
\[
\duality{\lambda}{\Lambda_{V}}=\frac{1}{m(V)}\int_{V}(R(\lambda\otimes \chi_{V}))(t) \dd m(t) \quad (\lambda \in \Linfty(G))\,.
\]
For each $\lambda \in \Linfty(G)$, we have
\[
\abs{\duality{\lambda}{\Lambda_{V}}}\leq \norm{R(\lambda\otimes \chi_{V})}_{p}\norm{\chi_{V}/m(V)}_{q}\leq \norm{R}\norm{\lambda}_{\infty}\norm{\chi_{V}}_{p}\norm{\chi_{V}/m(V)}_{q}
=\norm{R}\norm{\lambda}_{\infty}\,,
\]
and so $\Lambda_{V} \in \dual{\Linfty(G)}$ with $\norm{\Lambda_{V}}\leq \norm{R}$. Let $\V$ be the family of compact neighbourhoods of the 
identity $e$ in $G$, and set $V_{1}\leq V_{2}$ if $V_2 \subset V_1$. Then $(\V, \leq)$ is a directed set. Let $\Lambda$ be a weak$^*$ 
accumulation point in $\dual{\Linfty(G)}$ of the bounded net $\set{\Lambda_{V}: V \in \V}$. By passing to a subnet, we may suppose that
 $\set{\Lambda_{V}: V \in \V}$ converges to $\Lambda$.  Clearly $\duality{1}{\Lambda}=1$ since, for each $V \in \V$, we have
\[
\duality{1}{\Lambda_{V}}=\frac{1}{m(V)}\int_{V}(R(\widetilde{\Pi}\chi_{V}))(t) \dd m(t)=\frac{1}{m(V)}\int_{V}\dd m(t)=1\,,
\]
and so $\Lambda$ is non-zero.  We {\it claim} that $\Lambda$ is left $(p,p)$-multi-invariant.

Take $n \in \naturals$, and consider distinct elements $s_1,\ldots, s_n$ of $G$. Choose $V \in \V$ such that the sets $s_{1}V, \ldots,s_{n}V$ are pairwise disjoint.
 Let $U \in J$, and let $\mathbf{X}=\set{X_1,\ldots, X_n}$ be a measurable partition of $G$. Take $f_1,\ldots, f_n \in \Lspace^{\, q}(G)_{[1]}$ with 
$\support f_i \subset X_i\,\; (i \in \naturals_n)$, and finally set
\[
T=\sum_{i=1}^{n} \dual{U}(f_i)\otimes \chi_{s_{i}V}: \Lone(G)\rightarrow \Lspace^p(G)\,.
\]
By Lemma \ref{5.3.5}, $T\in J$ and $\norm{T}\leq \norm{U}m(V)^{1/p}$.

For each $i \in \naturals_n$, we have
\begin{align*}
m(V)\duality{ f_i}{\bidual{U}(s_{i}\cdot \Lambda_{V})} &=m(V)\duality{\dual{U}(f_i)}{s_{i}\cdot \Lambda_{V}}\\
&=\int_{V}R((\dual{U}(f_i)\cdot s_i)\otimes \chi_{V})(t) \dd m(t) \\
&=\int_{V}R( \dual{U}(f_i)\otimes (s_i\cdot \chi_{V}))(s_{i}t) \dd m(t) \\
&=\int_{s_{i}V}R(\dual{U}(f_i)\otimes \chi_{s_{i}V})(t) \dd m(t) \\
&=\int_{s_{i}V}R(\chi_{s_{i}V}T)(t) \dd m(t)\,;
\end{align*}
the third equality holds true by \eqref{eq4.3} and because $R$ is a $\Lmod$ homomorphism from $\widetilde{J}$ into $\Lspace^p(G)$ and $\Lspace^p(G)$ is essential in $\Lmod$. Hence, by H\"{o}lder's inequality, we have
\[
\abs{\duality{\dual{U}(f_i)}{s_{i}\cdot \Lambda_{V}}}\leq\norm{\chi_{s_{i}V}R(\chi_{s_{i}V}T)}_{p}m(V)^{\frac{1}{q}-1}\,.
\]
Then, by Proposition \ref{5.3.4}, we have
\begin{align*}
\left(\sum_{i=1}^{n}\abs{\duality{f_i}{\bidual{U}(s_{i}\cdot \Lambda_{V})}}^{p}\right)^{1/p} &\leq \left(\sum_{i=1}^{n}\norm{\chi_{s_{i}V}R(\chi_{s_{i}V}T)}_{p}^{p}\right)^{1/p}m(V)^{\frac{1}{q}-1} \\
&\leq  
\norm{R}\norm{T}m(V)^{\frac{1}{q}-1}\leq \norm{R}\norm{U}m(V)^{\frac{1}{p}}m(V)^{\frac{1}{q}-1}= \norm{R}\norm{U}\,.
\end{align*}
Therefore
\[
\lp\sum_{i=1}^{n}\abs{\duality{ f_i}{\bidual{U}(s_{i}\cdot \Lambda)}}^{p}\rp^{1/p} =\lim_{V}\left(\sum_{i=1}^{n}\abs{\duality{\dual{U}(f_i)}{s_{i}\cdot \Lambda_{V}}}^{p}\right)^{1/p}\leq \norm{R}\,.
\]
Since this is true for all such families $\set{f_1,\ldots,f_n}$ in $\Lspace^q(G)_{[1]}$, we have
\[
\left(\sum_{i=1}^{n}\norm{\chi_{X_i}\bidual{U}(s_{i}\cdot \Lambda)}_p^{p}\right)^{1/p}\leq \norm{R}\,.
\]
Since this is true for each measurable partition $\mathbf{X}=\set{X_1,\ldots,X_n}$ and each $U \in J_{[1]}$, it follows from Proposition \ref{4.6.21} that
\[
\norm{(s_1\cdot \Lambda,\ldots, s_n\cdot \Lambda)}_n^{(p,p)}\leq \norm{R}\,.
\]
Thus $\Lambda$ is a non-zero, left $(p,p)$-multi-invariant functional on $\Linfty(G)$, and so, by Lemma \ref{(p,q)-multi-invariance and absolute value} and Theorem \ref{(p,q)-amenable is amenable}, the group $G$ is amenable.
\end{proof}

The determination of when $\Lspace^p(G)$ is flat in the category $\Lmod$ is an easy consequence of the previous theorem and the following simple observation.

Let $A$ be a Banach algebra, and suppose that $\theta:A\to A$ is a Banach algebra anti-automorphism. For each Banach left $A$-module $E$, we can define a Banach right $A$-module $E_\theta$ as follows. As a Banach space $E_\theta=E$, and the right $A$-module action on $E_\theta$ is defined as
\[
	x\cdot a:=\theta(a)\cdot x\quad\quad(a\in A, x\in E_\theta)\,.
\]
In fact, it is obvious that every Banach right $A$-module has the form $E_\theta$ for some suitable $E\in\Amod$. By going through the definition directly, we obtain the following.

\begin{lemma}
Let $E$ be a Banach left $A$-module. Then $E$ is injective in $\Amod$ if and only if $E_\theta$ is injective in $\modA$. \enproof
\end{lemma}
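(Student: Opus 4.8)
The plan is to exhibit the assignment $E \mapsto E_\theta$ as the object part of an isomorphism of categories $\Amod \to \modA$ that fixes the underlying Banach spaces and the underlying linear maps, and then to note that injectivity, being a purely diagrammatic property in the sense of Definition \ref{1.1b}, transports automatically across such an isomorphism.

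First I would record the bijection on objects. Since $\theta^{-1}$ is again a Banach-algebra anti-automorphism, for any Banach right $A$-module $M$ the formula $a \cdot m := m \cdot \theta^{-1}(a)$ defines a Banach left $A$-module; call it ${}_{\theta^{-1}}M$. Using $\theta(\theta^{-1}(a)) = \theta^{-1}(\theta(a)) = a$, a direct computation gives ${}_{\theta^{-1}}(E_\theta) = E$ and $({}_{\theta^{-1}}M)_\theta = M$, so $E \mapsto E_\theta$ is a bijection between the objects of $\Amod$ and those of $\modA$.

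Next I would check that morphisms correspond with no change of underlying map. For $\phi \in \mathcal{B}(E,F)$, one has $\phi(x \cdot a) = \phi(\theta(a) \cdot x)$ in $E_\theta$, and this equals $\theta(a) \cdot \phi(x) = \phi(x) \cdot a$ precisely when $\phi$ intertwines the two \emph{left} actions. Hence $\phi$ is a left $A$-module morphism $E \to F$ if and only if it is a right $A$-module morphism $E_\theta \to F_\theta$, so that $\AB(E,F)$ and $\mathcal{B}_A(E_\theta, F_\theta)$ coincide as subsets of $\mathcal{B}(E,F)$, with the same composition. I would then observe that admissible monomorphisms are preserved in both directions: admissibility of $T$ requires only a bounded \emph{linear} left inverse $S \in \mathcal{B}(F,E)$, and since $E_\theta, F_\theta$ have exactly the same underlying Banach spaces as $E, F$, the same $S$ witnesses admissibility on either side.

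With these three correspondences in place the lemma is immediate. Given the extension problem that expresses injectivity of $E_\theta$ in $\modA$ — an admissible monomorphism $T$ and a morphism $S$ into $E_\theta$ — I would reinterpret it verbatim, via the identity-on-spaces correspondence, as the extension problem for injectivity of $E$ in $\Amod$, produce the extending morphism $R$ there, and transport it back. The argument is entirely formal, so the only point genuinely needing care is the preservation of admissibility: it is essential that admissibility asks for a \emph{non-module} splitting, for that is what makes it depend solely on the underlying spaces and hence survive the passage through $\theta$. Beyond this the proof is pure bookkeeping, matching each datum of Definition \ref{1.1b} — module morphism, admissible monomorphism, and the relation $R \circ T = S$ — bijectively under the functor.
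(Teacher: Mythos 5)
Your proposal is correct and is exactly the ``going through the definition directly'' argument that the paper leaves to the reader: the anti-automorphism $\theta$ induces an isomorphism of categories $\Amod\to\modA$ that is the identity on underlying Banach spaces and morphisms, and you rightly isolate the one point needing care, namely that admissibility survives because it asks only for a bounded linear (not module) left inverse. Nothing further is required.
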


Consider again the locally compact group $G$, and take $p\in (1,\infty)$, with conjugate index $q$. We see that the dual right $\Lone(G)$-module action $\cdot$ on $\Lspace^{q}(G)=\dual{(\Lspace^{p}(G),\star)}$ is given by
\[
	(h\cdot f)(t)=\int h(st)f(s)\dd m(s)=(\widetilde{f}\star h)(t)\quad\quad(f\in\Lone(G),\ h\in\Lspace^{q}(G))\,,
\]
where $\widetilde{f}(t)=f(t^{-1})\Delta(t^{-1})$  for $f\in \Lone(G)$, so that  $\widetilde{f} \in \Lone(G)$.   Thus $\dual{(\Lspace^{p}(G),\star)}=(\Lspace^{q}(G),\star)_\theta$ where $\theta: f\mapsto\widetilde{f}$ is an isometric anti-automorphism on $\Lone(G)$.

\begin{theorem}\label{flatness and amenability} 
Let $G$ be a locally compact group, and take $p\in(1,\infty)$. Then $\Lspace^{\, p}(G)$ is flat in $\Lmod$ if and only if $G$ is amenable.
\end{theorem}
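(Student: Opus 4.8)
The plan is to reduce the flatness of $\Lspace^p(G)$ directly to the injectivity result already established in Theorem \ref{injectivity and amenability}, using the identification of the dual module together with the preceding lemma on anti-automorphisms. By Definition \ref{definition of flat}, the module $\Lspace^p(G)$ is flat in $\Lmod$ precisely when its dual module $\dual{(\Lspace^p(G),\star)}$ is injective in $\modL$, so the whole task is to analyse the injectivity of this dual module.

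First I would invoke the module identification computed just above the statement, namely that $\dual{(\Lspace^p(G),\star)}=(\Lspace^q(G),\star)_\theta$, where $q$ is the conjugate index to $p$ and $\theta\colon f\mapsto\widetilde{f}$ is the isometric anti-automorphism of $\Lone(G)$. Hence flatness of $\Lspace^p(G)$ is equivalent to the injectivity of the right module $(\Lspace^q(G),\star)_\theta$ in $\modL$.

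Next, applying the Lemma immediately preceding this theorem with $A=\Lone(G)$ and $E=(\Lspace^q(G),\star)$, the right module $(\Lspace^q(G),\star)_\theta$ is injective in $\modL$ if and only if the left module $(\Lspace^q(G),\star)$ is injective in $\Lmod$. Since $p\in(1,\infty)$ forces $q=p'\in(1,\infty)$ as well, Theorem \ref{injectivity and amenability} applies verbatim with $q$ in place of $p$, giving that $\Lspace^q(G)$ is injective in $\Lmod$ if and only if $G$ is amenable.

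Chaining these equivalences yields the result: $\Lspace^p(G)$ is flat in $\Lmod$ $\iff$ $(\Lspace^q(G),\star)_\theta$ is injective in $\modL$ $\iff$ $\Lspace^q(G)$ is injective in $\Lmod$ $\iff$ $G$ is amenable. I do not expect any genuine obstacle here, since all the analytic content has been front-loaded into Theorem \ref{injectivity and amenability} and the two bookkeeping lemmas; the only point requiring care is confirming that the conjugate index $q$ again lies in $(1,\infty)$ so that the injectivity theorem is applicable, which is immediate.
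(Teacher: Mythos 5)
Your proposal is correct and follows exactly the same route as the paper: identify $\dual{(\Lspace^p(G),\star)}$ with $(\Lspace^q(G),\star)_\theta$, use the anti-automorphism lemma to pass from injectivity in $\modL$ to injectivity in $\Lmod$, and then apply Theorem \ref{injectivity and amenability} with the conjugate index $q\in(1,\infty)$. No gaps.
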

\begin{proof}
We know that $(\Lspace^{p}(G), \star)$ is flat in $\Lmod$ if and only if the dual module $\dual{(\Lspace^{p}(G),\star)}=(\Lspace^{q}(G),\star)_\theta$ is injective in $\modL$, and hence if and only if $(\Lspace^{q}(G),\star)$ is injective in $\Lmod$. By the main theorem, this holds if and only if $G$ is amenable.
\end{proof}

In summary, by combining Theorems \ref{(p,q)-amenable is amenable}, \ref{injectivity and amenability}, and \ref{flatness and amenability}, we obtain the following theorem.

\begin{theorem}
Let $G$  be a locally compact group, and take $p\in (1,\infty)$. Then the following conditions are equivalent:

\begin{enumerate}
\item  $G$ is amenable;

\item  $\Lspace^p(G)$ is injective in $\Lmod$;

\item  $\Lspace^p(G)$ is flat in $\Lmod$;

\item   $G$ is left $(p,q)$-amenable for all $q\geq p$;

\item   $G$ is left $(p,q)$-amenable for some $q\geq p$;

\item   $G$ is left $(1,q)$-amenable for all $q\ge 1$;

\item   $G$ is left $(1,q)$-amenable for some $q\ge 1$. \enproof
\end{enumerate}
\end{theorem}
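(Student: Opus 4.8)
The plan is to make amenability, condition~(i), the hub of the argument: I would show that each of the remaining six conditions is equivalent to~(i), whence all seven are mutually equivalent. Every implication needed is already contained in one of the three theorems cited immediately before the statement, so the task reduces to assembling these results and verifying in each case that the relevant pair of indices lies in the admissible range.

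First I would dispose of the two homological conditions. The equivalence (i)~$\iff$~(ii) is exactly Theorem~\ref{injectivity and amenability}, and (i)~$\iff$~(iii) is exactly Theorem~\ref{flatness and amenability}; both are valid for the fixed $p\in(1,\infty)$ with no extra hypotheses, so conditions~(ii) and~(iii) are settled at once.

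Next I would handle the four multi-invariance conditions by repeated appeal to Theorem~\ref{(p,q)-amenable is amenable}, which states, for every pair with $1\le p\le q<\infty$, that $G$ is amenable if and only if $G$ is left $(p,q)$-amenable. For~(iv): given~(i), each $q\ge p$ yields a pair $(p,q)$ with $1\le p\le q<\infty$ (as $p>1$), so the theorem delivers left $(p,q)$-amenability, giving (i)~$\Rightarrow$~(iv). The implication (iv)~$\Rightarrow$~(v) is trivial, and (v)~$\Rightarrow$~(i) follows by applying the same theorem to the single witnessing value of~$q$. The chain (i)~$\Rightarrow$~(vi)~$\Rightarrow$~(vii)~$\Rightarrow$~(i) runs identically, now with first index~$1$, using the pairs $(1,q)$ for $q\ge 1$.

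The only step demanding any care---and the closest thing to an obstacle---is the bookkeeping check that each pair of indices invoked really satisfies $1\le p\le q<\infty$, so that Theorem~\ref{(p,q)-amenable is amenable} applies. In particular, the $(1,q)$-conditions~(vi) and~(vii) are legitimate precisely because that theorem explicitly allows $p=1$. Once this is noted, the six equivalences with~(i) close up into the full set of equivalences, completing the proof.
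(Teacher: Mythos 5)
Your proposal is correct and matches the paper's own argument, which simply combines Theorems \ref{(p,q)-amenable is amenable}, \ref{injectivity and amenability}, and \ref{flatness and amenability} with amenability as the common hub. The index bookkeeping you note (that each pair $(p,q)$ or $(1,q)$ lies in the range $1\le p\le q<\infty$ required by Theorem \ref{(p,q)-amenable is amenable}) is the only detail to check, and you have checked it.
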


\begin{remark}
We also have that, for each $p\in(1,\infty)$, $\Lspace^p(G)$ is [injective / flat] in the categories [$\modL$ / $\LmodL$] if and only if $G$ is amenable. 
\end{remark}

\begin{remark}
There are natural quantitative versions of projectivity, injectivity, and flatness. These were first explicitly introduced and studied in \cite{White}. Let $A$ be a Banach algebra, and let $E \in \Amod$ be injective. We set 
\[
	\inj(E)=\inf \norm{\rho}\,,
\] 
where the infimum is taken over all right-inverse morphisms $\rho$ to the canonical morphism $\Pi$.

It follows from the previous theorem that, for a locally compact group $G$ and $p\in(1,\infty)$, we have $\inj(\Lspace^p(G))=1$ whenever $\Lspace^p(G)$ is injective in $\Lmod$.

Recently, G. Racher \cite{Racher} has proved (by different methods to us) that a discrete group $G$ is amenable whenever $\lspace^{\,2}(G)$ is injective in $\lmod$ with $\inj(\lspace^{\, 2}(G))=1$.
\end{remark}

\section{Semigroup algebras}
\label{Semigroup algebras}

\noindent Let $S$ be a semigroup, with product denoted by juxtaposition. We recall that $S$ is: \mbox{(i) \emph{left-cancellative}} if the map $L_s:t\mapsto st,\ S\to S,$ is injective for each $s\in S$; (ii) \emph{weakly left-cancellative} if  $\set{u\in S: su=t}$ is finite for each $s,t \in S$; (ii) \emph{uniformly weakly left-cancellative} if 
\[
	\sup_{s,t\in S}\cardinal{\set{u\in S: su=t}}<\infty\,.
\]
Further, $S$ is \emph{right-cancellative} if the map $R_s:t\mapsto ts,\ S\to S,$ is injective for each $s\in S$, and $S$ is \emph{cancellative} if it is both left- and right-cancellative.

Let $S$ be a semigroup. Then the Banach space $(\lone(S),\norm{\cdot}_1)$ is a Banach algebra with respect to a product $\star$ satisfying the condition that $\delta_s\star\delta_t=\delta_{st}$ for each $s,t\in S$. The Banach algebra $(\lone(S),\norm{\cdot}_1,\star)$ is the \emph{semigroup} algebra of $S$; for a discussion of this algebra, see \cite{DLS2010}.

Let $S$ be a semigroup. The action of $S$ on $\lone(S)$ is defined by $s\cdot f:=\delta_s\star f$,  
so that
\[
	(s\cdot f)(t)=\sum_{sr=t} f(r)\quad (t\in S)\,
\]
for $s\in S$ and $f\in\lone(S)$. This action can be extended by duality first to an action of $S$ on the space $\dual{\lone(S)}=\linfty(S)$ and then to an action on $\dual{\linfty(S)}=\bidual{\lone(S)}$. This latter extended action is denoted by 
\[
	(s,\Lambda)\mapsto s\cdot\Lambda,\quad S\times \dual{\linfty(S)}\to\dual{\linfty(S)}\,.
\]
 In the case where $S$ is left-cancellative, the action of each $s\in S$ on $\lone(S)$ is an isometry, and so its extension to an action on $\dual{\linfty(S)}$ is also an isometry.

The following proposition is easily checked. 

\begin{proposition}
Let $S$ be a semigroup, and take $p>1$. Then $\lspace^{\,p}(S)$ is a Banach left $\lone(S)$-module if and only if $S$ is uniformly weakly left-cancellative. \enproof
\end{proposition}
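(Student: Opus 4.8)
The plan is to reduce both implications to the boundedness of the action of the individual point masses $\delta_s$ on $\lspace^{\,p}(S)$, and to read this off from the sizes of the fibres $\set{t\in S:st=u}$. The module operation is the convolution action $s\cdot g=\delta_s\star g$, extended by linearity and continuity from the finitely supported functions, which are dense in $\lone(S)$. Writing $q$ for the conjugate index to $p$, I note that
\[
\norm{f\star g}_p\le\sum_{s\in S}\cardinal{f(s)}\,\norm{\delta_s\star g}_p\qquad(f\in\lone(S),\ g\in\lspace^{\,p}(S))\,,
\]
so that $\lspace^{\,p}(S)$ will be a Banach left $\lone(S)$-module precisely when the operators $g\mapsto\delta_s\star g$ on $\lspace^{\,p}(S)$ are uniformly bounded in $s\in S$. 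The key observation is the pointwise formula $(\delta_s\star g)(u)=\sum_{t:\,st=u}g(t)$, through which this uniform bound is governed by $N:=\sup_{s,u}\cardinal{\set{t\in S:st=u}}$, the uniform weak left-cancellativity constant of $S$.

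For the \emph{if} direction, I assume $S$ uniformly weakly left-cancellative, so $N<\infty$. Then for each $s\in S$ and $g\in\lspace^{\,p}(S)$ the fibre sum defining $(\delta_s\star g)(u)$ has at most $N$ terms, hence converges, and the convexity estimate $\cardinal{\sum_{i=1}^k a_i}^p\le k^{p-1}\sum_{i=1}^k\cardinal{a_i}^p$ (from H\"older, valid since $k\le N$) gives
\[
\norm{\delta_s\star g}_p^p=\sum_u\Big\lvert\sum_{t:\,st=u}g(t)\Big\rvert^p\le N^{p-1}\sum_u\sum_{t:\,st=u}\cardinal{g(t)}^p\,.
\]
Because each $t\in S$ contributes to exactly one value $u=st$, the double sum collapses to $\sum_t\cardinal{g(t)}^p=\norm{g}_p^p$, whence $\norm{\delta_s\star g}_p\le N^{1/q}\norm{g}_p$. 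Summing against $f$ as above yields $\norm{f\star g}_p\le N^{1/q}\norm{f}_1\norm{g}_p$, and the module axioms themselves follow from the associativity of $\star$, first for the $\delta_s$ and then by density. Thus $\lspace^{\,p}(S)\in\lSmod$.

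For the converse, I will test the hypothesised bound $\norm{f\star g}_p\le C\norm{f}_1\norm{g}_p$ on functions concentrated on a single fibre. Fix $s,u\in S$ and a finite subset $F\subseteq\set{t\in S:st=u}$ with $\cardinal{F}=k$, and put $g=\sum_{t\in F}\delta_t$, so that $\norm{g}_p=k^{1/p}$. Since $\delta_s\star\delta_t=\delta_{st}=\delta_u$ for every $t\in F$, I get $\delta_s\star g=k\,\delta_u$ and hence $\norm{\delta_s\star g}_p=k$. The module inequality then gives $k\le C\norm{\delta_s}_1\norm{g}_p=Ck^{1/p}$, that is, $k^{1/q}\le C$, so $k\le C^q$. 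As $F$ was an arbitrary finite subset of the fibre and $s,u$ were arbitrary, this forces $\sup_{s,u}\cardinal{\set{t\in S:st=u}}\le C^q<\infty$; that is, $S$ is uniformly weakly left-cancellative.

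I expect no serious obstacle, consistent with this being an ``easily checked'' proposition. The only points needing care are: confirming that the convolution sum is genuinely well-defined in the \emph{if} direction before the norm estimate is available (immediate once the fibres are finite); the collapsing of the double sum, which uses that $u=st$ is single-valued in $t$; and noting where $p>1$ enters, namely that the converse estimate $k\le Ck^{1/p}$ constrains $k$ only because $1/p<1$ (equivalently $q<\infty$), whereas for $p=1$ it is vacuous and indeed $\lone(S)$ is a module over itself for every semigroup $S$.
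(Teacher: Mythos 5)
Your proof is correct, and it is the natural argument: the paper states this proposition without proof (as ``easily checked''), and your reduction to the uniform boundedness of the maps $g\mapsto\delta_s\star g$, with the fibre-counting estimate $\norm{\delta_s\star g}_p\le N^{1/q}\norm{g}_p$ in one direction and the test functions $g=\sum_{t\in F}\delta_t$ in the other, is exactly what the authors have in mind. Your closing remark correctly identifies why the hypothesis $p>1$ is needed for the converse.
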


Suppose now that $S$ is a non-empty set, and take $p\ge 1$. Then we set $J=\operators(\lone(S), \lspace^{\,p}(S))$. It is easy to see that $J$ can be identified isometrically with the Banach space
\[
	\lspace^{\,\infty,p}(S):=\set{U:S\times S\to\complexs\colon \norm{U}=\sup_{s\in S}\left(\sum_{t\in S}\abs{U(s,t)}^p\right)^{1/p}<\infty};
\]
the identification is given by $U(s,t)=U(\delta_s)(t)$\;\, ($s,t\in G$).

\begin{lemma}\label{estimation lemma for discrete} 
Let $S$ be a non-empty set, and take $p\ge 1$. Suppose that $R:\lspace^{\,\infty,p}(S)\rightarrow \lspace^{\, p}(S)$ is a bounded linear operator. Further, suppose that $U$ and $U_s$ $(s\in S)$ are in $\lspace^{\,\infty,p}(S)$ and are such that $\abs{U_{s}(r,t)}\le \abs{\delta_{s}(t)U(r,t)}$ ($s,r,t\in S$). Then
\[
\left(\sum_{s \in S}\left|R(U_{s})(s)\right|^{p}\right)^{1/p}\leq \norm{U}\norm{R}\,.
\]
\end{lemma}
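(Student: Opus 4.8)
The plan is to transport the proof of Proposition \ref{5.3.4} to the discrete setting, with the sign-averaging device of Lemma \ref{5.2.3} playing the role that the measurable partitions played there. Since the left-hand side is the supremum of its finite partial sums, it suffices to fix distinct points $s_1,\ldots,s_n\in S$ and to prove that
\[
\sum_{i=1}^n \abs{R(U_{s_i})(s_i)}^p \le \norm{R}^p\norm{U}^p\,;
\]
the full statement then follows by taking the supremum over all finite subsets of $S$.

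First I would extract the crucial consequence of the support hypothesis $\abs{U_s(r,t)}\le\abs{\delta_s(t)U(r,t)}$. For signs $d=(d_1,\ldots,d_n)\in\set{-1,1}^n$, put $V_d=\sum_{i=1}^n d_iU_{s_i}\in\lspace^{\,\infty,p}(S)$. Because $U_{s_i}(r,t)$ vanishes unless $t=s_i$ and the $s_i$ are distinct, for each fixed pair $(r,t)$ at most one of the summands $d_iU_{s_i}(r,t)$ is non-zero; hence $\abs{V_d(r,t)}\le\abs{U(r,t)}$ for all $r,t\in S$, and therefore $\norm{V_d}\le\norm{U}$ for \emph{every} choice of signs. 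Applying $R$, using $R(V_d)=\sum_i d_iR(U_{s_i})$ and $\norm{R(V_d)}_p\le\norm{R}\norm{V_d}$, I obtain
\[
\sum_{j=1}^n\abs{\sum_{i=1}^n d_iR(U_{s_i})(s_j)}^p
=\sum_{j=1}^n\abs{R(V_d)(s_j)}^p\le\norm{R(V_d)}_p^p\le\norm{R}^p\norm{U}^p\,.
\]

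Finally I would invoke Lemma \ref{5.2.3} with the scalar array $F(i,j)=R(U_{s_i})(s_j)$, the scalars being regarded as elements of $\Lspace^p$ over a one-point measure space, so that $\norm{\cdot}_p$ is the modulus and the lemma applies verbatim. The displayed inequality, holding for all $d$, shows that the constant $C$ of that lemma satisfies $C\le\norm{R}\norm{U}$, whence
\[
\sum_{j=1}^n\abs{R(U_{s_j})(s_j)}^p=\sum_{j=1}^n\norm{F(j,j)}_p^p\le C^p\le\norm{R}^p\norm{U}^p\,,
\]
and passing to the supremum over finite subsets of $S$ completes the argument. The only step requiring genuine care is the estimate $\norm{V_d}\le\norm{U}$: this is precisely where the disjointness of the column supports of the $U_{s_i}$ (forced by the hypothesis) guarantees that inserting signs never inflates the norm, and it is the discrete substitute for the choice of pairwise-disjoint supports used in Lemma \ref{5.3.5} and Proposition \ref{5.3.4}. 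Everything else is bookkeeping together with a direct appeal to Lemma \ref{5.2.3}.
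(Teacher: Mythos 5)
Your argument is correct and is exactly the route the paper takes: the paper's proof consists of the single line ``this follows from (the proof of) Proposition \ref{5.3.4}'', and your write-up is precisely that adaptation spelled out --- the sign-averaging of Lemma \ref{5.2.3} applied to the scalar array $R(U_{s_i})(s_j)$, with the bound $\bigl\|\sum_i d_i U_{s_i}\bigr\|\le\|U\|$ (from the disjointness of the column supports forced by the hypothesis) replacing the equality $\bigl\|\sum_i d_i\chi_{Y_i}U\bigr\|=\|U\|$ used for measurable partitions. No gaps.
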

\begin{proof}
This follows from (the proof of) Proposition \ref{5.3.4}.
\end{proof}

Now suppose  that $S$ is a uniformly weakly left-cancellative semigroup. Define a left $\lone(S)$-module action on $\lspace^{\,\infty,p}(S)$ by
\[
	(f*U)(s,t):=\sum 
	\set{f(r)U(x,y):\  r,x,y\in S,\ rx=s, ry=t}\quad\quad (f\in\lone(S),\ U\in\lspace^{\,\infty,p}(S))\,.
\]
This induces a new Banach left $\lone(S)$-module action on $J$ similar to the one given in \eqref{new module action on J}; to avoid confusion with the standard module action on $J$, we shall denote this new module by $(\widetilde{J},*)$. We shall freely identify $\lspace^{\,\infty,p}(S)$ with $(\widetilde{J},*)$. Define $\widetilde{\Pi}:\lspace^{\,p}(S)\to \widetilde{J}$ by
\[
	\widetilde{\Pi}(g)(f)=\sum_{s\in S}f(s)g\qquad(f\in\lone(S),\  g\in\lspace^{\,p}(S))\,;
\]
in the identification $\widetilde{J}=\lspace^{\,\infty,p}(S)$, we have $\widetilde{\Pi}(g)(s,t)=g(t)$. It is then obvious that $\widetilde{\Pi}$ is an admissible left $\lone(S)$-module homomorphism. 

Again, the next proposition follows from the definition of injectivity.

\begin{proposition}\label{semigroupCoretract}
Let $S$ be a uniformly weakly left-cancellative semigroup, and take $p\ge 1$. Suppose that $\lspace^{\, p}(S)$ is injective in $\lSmod$. Then the morphism $\widetilde{\Pi}:\lspace^{\,p}(S)\rightarrow \widetilde{J}$ is a coretraction. \enproof
\end{proposition}

We shall also need the following, taken from {\cite[Proposition 3.1]{Ramsdensemigroup}}.

\begin{proposition}\label{module homomorphisms when module is injective}
Let $A$ be a Banach algebra, and let $E$ be injective in $\Amod$. Then, for every $Q\in \operators(A,E)$ and $D\subset A$ such that $Q(ab)=aQ(b)$ for all $a\in A$ and $b\in D$, there exists $x_0\in E$ with $Q(b) = bx_0$ for all $b\in D$. \enproof
\end{proposition}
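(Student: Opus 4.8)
The plan is to realise $Q$ as an element of a module of operators on which $E$ sits as an injective submodule, and then to read off $x_0$ by applying a module retraction. Throughout, let $A^{\sharp}=A\oplus\complexs 1$ be the unitization of $A$, so that $E$ becomes a \emph{unital} Banach left $A^{\sharp}$-module in the canonical way, and give $\operators(A^{\sharp},E)$ the Banach left $A$-module structure determined by $(a\cdot T)(c)=T(ca)$ for $a\in A$, $c\in A^{\sharp}$, and $T\in\operators(A^{\sharp},E)$ (note $ca\in A$ since $A$ is an ideal in $A^{\sharp}$).

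First I would set up an admissible monomorphism. Define $\Pi\in\operators(E,\operators(A^{\sharp},E))$ by $\Pi(x)(c)=c\cdot x$ for $c\in A^{\sharp}$ and $x\in E$. A direct check shows that $\Pi$ is an $A$-module morphism, that it is injective (since $\Pi(x)(1)=x$), and that the evaluation map $T\mapsto T(1)$ is a bounded linear left inverse of $\Pi$; hence $\Pi$ is an admissible monomorphism in $\Amod$. Applying the injectivity of $E$ (Definition \ref{1.1b}) to the admissible monomorphism $\Pi$ and to the identity morphism $\id_E\in\AB(E,E)$ produces a module morphism $\rho\in\AB(\operators(A^{\sharp},E),E)$ with $\rho\circ\Pi=\id_E$.

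Next I would transport $Q$ into this picture. Choose any bounded linear extension $\widehat{Q}\in\operators(A^{\sharp},E)$ of $Q$ (for definiteness, $\widehat{Q}(a+\lambda 1)=Q(a)$), and put $x_0:=\rho(\widehat{Q})\in E$. The crux is the identity
\[
	b\cdot\widehat{Q}=\Pi(Q(b))\qquad(b\in D)\,,
\]
which is exactly where the hypothesis enters: for $c=a+\lambda 1\in A^{\sharp}$ we have $(b\cdot\widehat{Q})(c)=\widehat{Q}(cb)=Q(ab)+\lambda Q(b)=a\cdot Q(b)+\lambda\, Q(b)=c\cdot Q(b)=\Pi(Q(b))(c)$, using that $ab,b\in A$ together with $Q(ab)=a\cdot Q(b)$ for $b\in D$. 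Since $\rho$ is a module morphism, it then follows that
\[
	b\cdot x_0=b\cdot\rho(\widehat{Q})=\rho(b\cdot\widehat{Q})=\rho(\Pi(Q(b)))=Q(b)\qquad(b\in D)\,,
\]
as required.

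The only real obstacle is the initial one of making injectivity applicable with no extra assumption on $E$: the naive embedding $E\hookrightarrow\operators(A,E)$ furnished by the canonical map need not be injective or admissible when $\{x\in E:A\cdot x=\{0\}\}\neq\{0\}$, which is precisely why Proposition \ref{2.2} carries that side condition. Passing to $A^{\sharp}$ repairs this for free, since the unit supplies both the injectivity of $\Pi$ and the bounded left inverse $T\mapsto T(1)$; the remaining steps are the routine verifications that the $A$-actions behave as stated and the one-line computation of $b\cdot\widehat{Q}$ above. Note also that $x_0$ is independent of the chosen extension $\widehat{Q}$ on $D$, because $b\cdot\widehat{Q}$ depends only on $\widehat{Q}|_A=Q$.
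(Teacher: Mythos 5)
Your proof is correct: the unitization repairs exactly the faithfulness/admissibility issue you identify, $\Pi:x\mapsto(c\mapsto c\cdot x)$ into $\operators(A^{\sharp},E)$ is an admissible monomorphism of Banach left $A$-modules, and the computation $b\cdot\widehat{Q}=\Pi(Q(b))$ for $b\in D$ is where the hypothesis is used, so applying the retraction $\rho$ yields $Q(b)=b\cdot x_0$ with $x_0=\rho(\widehat{Q})$. The paper itself gives no proof --- the proposition is quoted from \cite{Ramsdensemigroup} --- but your argument is the standard one and is essentially that of the cited source.
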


Let $S$ be a semigroup. Then an element $\Lambda \in \dual{\linfty(S)}$ is a {\it mean} on $\linfty(S)$ if $\duality{1}{\Lambda}=\norm{\Lambda}=1$, and $\Lambda$ is {\it left-invariant}
 if $\set{s\cdot \Lambda:\ s \in S}=\set{\Lambda}$; the semigroup $S$ is \emph{left-amenable} if there exists a left-invariant mean on $\linfty(S)$.  See \cite[p. 16]{Paterson} for more details. 

 \begin{definition}  A functional $\Lambda \in \dual{\linfty(S)}$ is {\it left $(p,q)-$multi-invariant\/} if the set $\set{s\cdot \Lambda:\ s \in S}$ is multi-bounded in the $(p,q)$-multi-norm.
\end{definition}

\begin{theorem}\label{left cancellative and injectivity} 
Let $S$ be a left-cancellative semigroup, and take $p\ge 1$. Suppose that $\lspace^{\, p}(S)$ is injective in $\lSmod$. Then $S$ is left-amenable and has a right identity.
\end{theorem}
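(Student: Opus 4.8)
The plan is to follow the template of Theorem~\ref{injectivity and amenability}, but the absence of inverses in $S$ forces two changes: the right identity must be produced first and then used as the base point of the construction, and the transport identity~\eqref{eq4.3} of the group case must be replaced by the one-sided estimate of Lemma~\ref{estimation lemma for discrete}. Write $A=\lone(S)$; a left-cancellative semigroup is uniformly weakly left-cancellative (with bound $1$), so $\lspace^{\,p}(S)\in\lSmod$ and each $s\in S$ acts on $A$, and hence on $\dual{\linfty(S)}=\bidual{A}$, as a lattice isometry. Since $\lspace^{\,p}(S)$ is injective, Proposition~\ref{semigroupCoretract} supplies a left $A$-module coretraction $R:\widetilde{J}\to\lspace^{\,p}(S)$ with $R\circ\widetilde{\Pi}=\id$. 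I must produce a right identity for $S$ and a left-invariant mean on $\linfty(S)$.

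For the right identity, I would apply Proposition~\ref{module homomorphisms when module is injective} to the contractive inclusion $\iota:A\hookrightarrow\lspace^{\,p}(S)$ (recall $\norm{f}_p\le\norm{f}_1$ for $p\ge1$), which is a left $A$-module homomorphism because $\iota(a\star b)=a\star b=a\cdot\iota(b)$. The proposition yields $x_0\in\lspace^{\,p}(S)$ with $b=b\star x_0$ for every $b\in A$; taking $b=\delta_s$ gives $s\cdot x_0=\delta_s$ for each $s\in S$. Evaluating $(s\cdot x_0)(t)=\sum_{sr=t}x_0(r)$ and using left cancellation, one reads off that $x_0(r)=1$ when $sr=s$ and $x_0(r)=0$ otherwise; since this holds for every $s$ and $x_0\neq0$, any $r\in\support x_0$ satisfies $sr=s$ for all $s$, that is, is a right identity $e$ of $S$.

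Next I would define $\Lambda\in\dual{\linfty(S)}$ by $\duality{\lambda}{\Lambda}=R(\lambda\otimes\delta_e)(e)$. Since $1\otimes\delta_e=\widetilde{\Pi}(\delta_e)$, we have $\duality{1}{\Lambda}=R(\widetilde{\Pi}\delta_e)(e)=\delta_e(e)=1$, so $\Lambda$ is non-zero with $\norm{\Lambda}\le\norm{R}$. The crux is to show that $\Lambda$ is left $(p,p)$-multi-invariant. By Lemma~\ref{weak (p,q)-multi-norm on dual space} (with base space $\linfty(S)$), for distinct $s_1,\ldots,s_n\in S$ one has
\[
	\norm{(s_1\cdot\Lambda,\ldots,s_n\cdot\Lambda)}^{(p,p)}_n=\sup\Bigl\{\Bigl(\sum_{i=1}^n\abs{\duality{x_i}{s_i\cdot\Lambda}}^p\Bigr)^{1/p}:x_1,\ldots,x_n\in\linfty(S),\ \mu_{p,n}(x_1,\ldots,x_n)\le1\Bigr\}\,.
\]
For each $i$ I would rewrite $\duality{x_i}{s_i\cdot\Lambda}=\duality{x_i\cdot s_i}{\Lambda}=R((x_i\cdot s_i)\otimes\delta_e)(e)$ and then transport it to the point $s_i$: using $se=s$, left cancellation, and the module identity $R(\delta_{s_i}*W)=\delta_{s_i}\star R(W)$, a direct computation shows that $\delta_{s_i}*((x_i\cdot s_i)\otimes\delta_e)=(x_i\chi_{s_iS})\otimes\delta_{s_i}=:U_{s_i}$ and hence that $R(U_{s_i})(s_i)=\duality{x_i}{s_i\cdot\Lambda}$. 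The operator $U_{s_i}\in\lspace^{\,\infty,p}(S)$ is supported in the single column $t=s_i$, and $\abs{U_{s_i}(a,t)}\le\abs{\delta_{s_i}(t)U(a,t)}$ for the single operator $U$ given by $U(a,s_i)=x_i(a)$ and $U(a,t)=0$ for $t\notin\set{s_1,\ldots,s_n}$; testing $\mu_{p,n}(x_1,\ldots,x_n)\le1$ against the point masses $\delta_a$ shows $\norm{U}=\sup_a(\sum_i\abs{x_i(a)}^p)^{1/p}\le1$. Lemma~\ref{estimation lemma for discrete} then bounds $(\sum_i\abs{R(U_{s_i})(s_i)}^p)^{1/p}$ by $\norm{R}$, so that $\norm{(s_1\cdot\Lambda,\ldots,s_n\cdot\Lambda)}^{(p,p)}_n\le\norm{R}$ and $\set{s\cdot\Lambda:s\in S}$ is $(p,p)$-multi-bounded.

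Finally I would pass to a genuine invariant mean as in the group case. Identifying $\dual{\linfty(S)}$ isometrically with an $L^1$-space and using that each $s$ acts as a lattice isometry, the argument of Lemma~\ref{(p,q)-multi-invariance and absolute value} shows that $\abs{\Lambda}/\norm{\Lambda}$ is a left $(p,p)$-multi-invariant mean. Its orbit is relatively weakly compact by Corollary~\ref{multi-bounded and weak compactness in Lone}, so its closed convex hull $K$ is weakly compact (Kre\u{\i}n--\u{S}mulian); the maps $\Psi\mapsto s\cdot\Psi$ form a semigroup of affine isometries of $K$, and Theorem~\ref{RN} provides a common fixed point, which is a left-invariant mean. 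Hence $S$ is left-amenable. I expect the transport step to be the main obstacle: since $S$ has no inverses one cannot shift by $s_i^{-1}$ as in~\eqref{eq4.3}, and the idea is to use the right identity to make $s_i\cdot\delta_e=\delta_{s_i}$ (so that the relevant columns $t=s_i$ are genuinely distinct), while the one-sided domination in Lemma~\ref{estimation lemma for discrete} absorbs the restriction $\chi_{s_iS}$.
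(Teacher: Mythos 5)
Your proposal is correct and follows essentially the same route as the paper's proof: the right identity obtained from Proposition~\ref{module homomorphisms when module is injective} applied to $\iota:\lone(S)\to\lspace^{\,p}(S)$, the functional $\duality{\lambda}{\Lambda}=R(\lambda\otimes\delta_e)(e)$, the transport of $\duality{x_i}{s_i\cdot\Lambda}$ to $R(U_{s_i})(s_i)$ via the module property of $R$ and left cancellation, the square-function bound of Lemma~\ref{estimation lemma for discrete}, and finally $\abs{\Lambda}/\norm{\Lambda}$ together with the Ryll--Nardzewski theorem. The only (harmless) deviation is that you reach the bound on $\norm{(s_1\cdot\Lambda,\ldots,s_n\cdot\Lambda)}^{(p,p)}_n$ directly through Lemma~\ref{weak (p,q)-multi-norm on dual space} and a single finite-rank operator $U=\sum_i x_i\otimes\delta_{s_i}$, whereas the paper ranges over arbitrary $U\in J_{[1]}$ and disjointly supported $f_i\in\dual{E}_{[1]}$ and then invokes Proposition~\ref{4.6.21} (with a separate remark for $p=1$); the underlying computation is identical.
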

\begin{proof}
Set $A=\lone(S)$ and $E=\lspace^{\,p}(S)$, and consider the natural injection $\iota:A\to E$. Then $\iota$ is obviously a left $A$-module homomorphism. By Proposition \ref{module homomorphisms when module is injective}, there exists $g_0\in E$ such that $f=f\star g_0$ for every $f\in A$. Since $S$ is left-cancellative, it follows easily that $S$ has a right identity, say $e$.

By Proposition \ref{semigroupCoretract}, there exists $R \in {}_{A}\operators(\widetilde{J}, E)$ with $R\circ \widetilde{\Pi}=\id_{E}$. We define $\Lambda \in \dual{\linfty(S)}$ by
\[
\duality{\lambda}{\Lambda}=(R(\lambda\otimes \delta_{e}))(e)\quad (\lambda \in \linfty(S))\,,
\]
where $\lambda \otimes \delta_e \in \operators(\lone(S), \lspace^{\,p}(S)) =\widetilde{J}$,
 as in  (\ref{tensor form of finite-rank}).  We have
\[
\duality{1}{\Lambda}=(R(\chi_{S}\otimes \delta_{e}))(e)=(R(\widetilde{\Pi}\delta_{e}))(e)=\delta_{e}(e)=1\,.
\]
We {\it claim} that $\Lambda$ is left $(p,p)$-multi-invariant. 

Take $n \in \naturals$, and consider distinct elements $s_{1},\ldots, s_{n}$ of $S$. Let $U \in \operators(A, E)$, and take $f_1,\ldots, f_n \in \dual{E}_{[1]}$ with pairwise--disjoint supports. Define
\[
T=\sum_{i=1}^{n} \dual{U}(f_i)\otimes\delta_{s_i}:A\rightarrow E\,.
\]
By Lemma \ref{5.3.5}, $T\in J$ and $\norm{T}\leq \norm{U}$.

For each $i \in \naturals_n$, since $S$ is left-cancellative, we have
\begin{align*}
\duality{ f_i}{\bidual{U}(s_{i}\cdot \Lambda)}&=\duality{\dual{U}(f_i)}{s_{i}\cdot \Lambda}=R((\dual{U}(f_i)\cdot s_i)\otimes \delta_{e})(e)\\
&=R[\delta_{s_i}*((\dual{U}(f_i)\cdot s_i)\otimes \delta_{e})](s_i)=R(T_{i})(s_i)\,,
\end{align*}
where we set $T_i=\delta_{s_i}*((\dual{U}(f_i)\cdot s_i)\otimes \delta_{e})$. We see that, for each $r,t\in S$, we have
\begin{align*}
	T_i(r,t)&=\sum\set{((\dual{U}(f_i)\cdot s_i)\otimes \delta_{e})(x,y): \ x,y\in S,\ s_ix=r,\ s_iy=t}\\
	&=\sum\set{\dual{U}(f_i)(s_ix)\delta_{e}(y): \ x,y\in S,\ s_ix=r,\ s_iy=t}\\
	&=\sum\set{\dual{U}(f_i)(r)\delta_{s_i}(t): \ x,y\in S,\ s_ix=r,\ s_iy=t}\\
	&=\sum\set{\delta_{s_i}(t)T(r,t): \ x,y\in S,\ s_ix=r,\ s_iy=t}\,,
\end{align*}
and so $T_i(r,t)$ is either $\delta_{s_i}(t)T(r,t)$ or $0$. Thus $\abs{T_i}\le \abs{\delta_{s_i}T}$; here, we are identifying $J$ with $\lspace^{\,\infty,p}(S)$. 
Hence, by Lemma \ref{estimation lemma for discrete}, we have
\[
\lp\sum_{i=1}^{n}\abs{\duality{ f_i}{\bidual{U}(s_{i}\cdot \Lambda)}}^{p}\rp^{1/p} =\left(\sum_{i=1}^{n}\abs{R(T_i)(s_i)}^{p}\right)^{1/p}\leq \norm{R}\norm{U}\,.
\]
Since this is true for all such collections $\set{f_1,\ldots,f_n}$ in $\dual{E}_{[1]}$, by Proposition \ref{4.6.21} when $p>1$ and the same calculation as 
in the proof of Proposition \ref{4.6.21} when $p=1$, we have 
\[
	\norm{(s_{1}\cdot\Lambda, \ldots, s_{n}\cdot\Lambda)}^{(p,p)}_{n}\leq \norm{R}\,.
\]
Therefore the set $\set{s\cdot\Lambda\colon s\in S}$ is $(p,p)$-multi-bounded.

Note that, since $S$ is left-cancellative, the map $L_s:\Psi\mapsto s\cdot \Psi$ on $\dual{\linfty(S)}$ is isometric (as well as being  positive), and so 
$\abs{s\cdot\Lambda}=s\cdot\abs{\Lambda}$  for each $s\in S$. Thus, essentially as in  Lemma \ref{(p,q)-multi-invariance and absolute value}, we see that 
$\abs{\Lambda}/\norm{\Lambda}$ is a left $(p,p)$-multi-invariant mean on $\linfty(S)$. We can then argue in a similar way to that in the proof of Theorem 
\ref{(p,q)-amenable is amenable} by applying the Ryll-Nardzewski fixed point theorem to the semigroup $\set{L_s:\ s\in S}$ to find a left-invariant mean on $\linfty(S)$. 
\end{proof}

The following theorem in the case where $p=1$ was proved  in \cite[Theorem 4.10]{Ramsdensemigroup}.

\begin{theorem}
Let $S$ be a cancellative semigroup, and take $p\in[1,\infty)$. Then $\lspace^{\, p}(S)$ is injective in $\lSmod$ if and only if $S$ is an amenable group.
\end{theorem}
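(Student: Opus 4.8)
The plan is to prove the two implications separately, disposing of the easy converse first and then reducing the forward direction to the left-cancellative case already settled in Theorem \ref{left cancellative and injectivity}.

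For the converse, suppose that $S$ is an amenable group $G$, so that $\lone(S)=\lone(G)$ is a discrete group algebra. By Johnson's theorem \cite{Johnson72}, $\lone(G)$ is an amenable Banach algebra. When $p\in(1,\infty)$, the space $\lspace^{\,p}(G)$ is reflexive and is a dual Banach module (it is the dual of $\lspace^{\,p'}(G)$), and hence it is injective, since every dual module over an amenable Banach algebra is injective by the theorem of Helemskii quoted in Section 2. When $p=1$, the module is $\lone(G)$ itself, and \cite[Theorem 4.9]{DP} shows that $\lone(G)$ is injective in $\lmod$ precisely because $G$ is discrete and amenable. This settles the `if' direction for all $p\in[1,\infty)$.

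For the forward direction, suppose that $\lspace^{\,p}(S)$ is injective. Since $S$ is cancellative it is in particular left-cancellative, so Theorem \ref{left cancellative and injectivity} applies and yields both that $S$ is left-amenable and that $S$ has a right identity $e$. I first upgrade $e$ to a two-sided identity: for each $s\in S$ we have $e\cdot(es)=(ee)s=es=e\cdot s$, and left-cancellation gives $es=s$, so that $e$ is a two-sided identity and $S$ is a cancellative monoid.

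The crux is to promote this monoid to a group, and here left-amenability alone is not enough: the additive monoid $\naturals\cup\set{0}$ is a cancellative, left-amenable monoid with identity that is not a group, so the injectivity hypothesis must be exploited a second time, beyond the bare consequences recorded in Theorem \ref{left cancellative and injectivity}. Since a monoid in which every element has a right inverse is automatically a group, it suffices to show that $sS=S$ for every $s\in S$, equivalently that $e\in sS$ for each $s$. My plan is to extract these right inverses directly from the coretraction $R\in{}_{\lone(S)}\operators(\widetilde{J},\lspace^{\,p}(S))$ furnished by Proposition \ref{semigroupCoretract} (so that $R\circ\widetilde{\Pi}=\id$), running an argument parallel to the one that produced the right identity $e$ via Proposition \ref{module homomorphisms when module is injective}, but now applied to the principal one-sided ideals $sS$ so as to force $e\in sS$. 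An alternative route is to observe that left-amenability of the cancellative $S$ makes it right-reversible, so that $S$ embeds by Ore's theorem in its group of right quotients $G=SS^{-1}$, which is amenable, and then to use the injectivity of $\lspace^{\,p}(S)$ to show that $S$ is closed under inversion in $G$, i.e.\ that $S=G$. Either way, this invertibility step is the main obstacle, as it is precisely what must rule out examples such as $\naturals\cup\set{0}$. Finally, once $S$ is known to be a group, its left-amenability as a semigroup coincides with amenability as a group, so $S$ is an amenable group, completing the proof.
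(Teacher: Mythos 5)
You have the right architecture — easy converse, then Theorem \ref{left cancellative and injectivity} to get left-amenability and a (two-sided, by cancellation) identity $e$, then a final push to show every element is invertible — and you correctly diagnose that the last step is where the remaining content lies, since $\naturals\cup\set{0}$ shows left-amenability plus cancellation plus an identity is not enough. But that last step is exactly what your proposal does not prove: you offer two \emph{plans} (``extract right inverses from the coretraction \ldots applied to the principal one-sided ideals $sS$'' and ``use injectivity to show $S$ is closed under inversion in its Ore group of quotients'') without executing either. Since this is the only point at which the injectivity hypothesis is used beyond what Theorem \ref{left cancellative and injectivity} already delivers, the proof is genuinely incomplete as written.

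For the record, the paper carries out a version of your first plan, and the missing construction is short. For $t\in S$ define $Q_t\in\operators(\lone(S),\lspace^{\,p}(S))$ by $Q_t(\sum_s\alpha_s\delta_s)=\sum_s\alpha_{st}\delta_s$, so that $Q_t(\delta_{st})=\delta_s$ for all $s$. Right-cancellativity (injectivity of $R_t$) gives $Q_t(f\star\delta_{st})=f\star Q_t(\delta_{st})$ for all $f\in\lone(S)$ and $s\in S$, so Proposition \ref{module homomorphisms when module is injective}, applied with $D=St$, yields $a_t\in\lspace^{\,p}(S)$ with $\delta_s=Q_t(\delta_{st})=\delta_{st}\star a_t$ for every $s$; taking $s=e$ gives $\delta_e=\delta_t\star a_t$, and since left-cancellativity makes $u\mapsto tu$ injective, some $u$ in the support of $a_t$ satisfies $tu=e$. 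Then $(ut)u=u(tu)=ue=u=eu$, and cancelling $u$ on the right gives $ut=e$, so $t$ is invertible. Note that this uses Proposition \ref{module homomorphisms when module is injective} directly rather than the coretraction $R$ of Proposition \ref{semigroupCoretract}, and that the relevant subset is $D=St$ (elements of the form $st$), not the right ideal $tS$ as your sketch suggests. Your alternative Ore-quotient route is plausible in outline but would still need an argument of comparable substance to show $S=G$.
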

\begin{proof}
Certainly, $\lspace^{\, p}(S)$ is injective in $\lSmod$ whenever $S$ is an amenable group. 

Suppose that $\lspace^{\, p}(S)$ is injective in $\lSmod$. By Theorem \ref{left cancellative and injectivity}, $S$ is left-amenable and has a right identity.
 It remains to prove that $S$ is a group; the argument is similar to that in \cite[Theorem 4.10]{Ramsdensemigroup}.  Since $S$ is cancellative, a right identity $e$ of $S$
 must be the identity of $S$. For each $t\in S$, we consider a map $Q_t:\lone(S)\to \lspace^{\,p}(S)$ defined as
\[
	Q_t\colon\sum_{s\in S}\alpha_s\delta_s\mapsto\sum_{s\in S}\alpha_{st}\delta_s\,,
\]
so that $Q_t(\delta_{st}) =\delta_s\,\;(s\in S)$. 
Since  $R_t$ is injective on $S$, we have  $Q_t(f\star\delta_{st})=f\star Q_t(\delta_{st})$ for all $f\in\lone(S)$ and $s\in S$.
 By Proposition \ref{module homomorphisms when module is injective}, there exists $a_t\in \lspace^{\,p}(S)$ such that 
\[
Q_t(\delta_{st})=\delta_{st}\,\star\, a_t\quad(s\in S)\,. 
\]
Thus $	\delta_s= 	\delta_{st}\,\star\,a_t\,\;(s\in S)$; in particular, $	\delta_e= 	\delta_{t}\,\star\,a_t$.  This implies that there exists $u\in S$ with $tu=e$.
 Now $utu =ue =eu$, and so the injectivity of $R_u$  implies that $ut=e$. 
Hence $u$ is the inverse of $t$ in $S$.  This shows that  $S$ is an (amenable) group. 
\end{proof}

\begin{corollary}
Let $S$ be a right--cancellative semigroup, and take $p\in [1,\infty)$. Suppose that $\lspace^{\,p}(S)$ is flat in $\lSmod$. 
Then $S$ is right--amenable and has a left identity. If, furthermore, $S$ is cancellative, then $S$ is an amenable group. \enproof
\end{corollary}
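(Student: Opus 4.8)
The plan is to dualize the entire analysis of this section and transport it to the opposite semigroup. By Definition~\ref{definition of flat}, the module $\lspace^{\,p}(S)$ is flat in $\lSmod$ exactly when its dual $\dual{\lspace^{\,p}(S)}$ is injective in the category ${\bf mod}\dash\lone(S)$ of Banach right $\lone(S)$-modules. I would record the formal device that legitimises the dualization: the identity map is an isometric \emph{anti}-isomorphism $\lone(S)\to\lone(S^{\mathrm{op}})$ of Banach algebras, and under it Banach right $\lone(S)$-modules are precisely Banach left $\lone(S^{\mathrm{op}})$-modules, with admissible monomorphisms and hence injectivity preserved (this is the general form of the opposite-module lemma used for groups in \S\ref{Multi-norm and homology}). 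Since $S$ is right-cancellative, $S^{\mathrm{op}}$ is left-cancellative, so the whole left-handed development leading up to Theorem~\ref{left cancellative and injectivity} — the space $\lspace^{\,\infty,p}$, the morphism $\widetilde{\Pi}$, Proposition~\ref{semigroupCoretract}, Lemma~\ref{estimation lemma for discrete}, Lemma~\ref{5.3.5}, and Proposition~\ref{4.6.21} — has a mirror image obtained by interchanging the words \emph{left} and \emph{right} throughout.

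With this dictionary in place, the strategy is to prove the mirror of Theorem~\ref{left cancellative and injectivity}: for a right-cancellative $S$ with $\dual{\lspace^{\,p}(S)}$ injective (equivalently, $\lspace^{\,p}(S)$ flat), the group $S$ is right-amenable and has a left identity. Its proof should transcribe the given one with the roles of left and right interchanged. First, the right-module analogue of Proposition~\ref{module homomorphisms when module is injective}, applied to the appropriate right-handed inclusion, would produce an element $g_0$ with $f=g_0\star f$ for all $f\in\lone(S)$, whence right-cancellation forces a left identity $e$. Next, using flatness to split the mirror of $\widetilde{\Pi}$ by a morphism $R$, I would define $\Lambda\in\dual{\linfty(S)}$ by $\duality{\lambda}{\Lambda}=R(\delta_e\otimes\lambda)(e)$, check $\duality{1}{\Lambda}=1$, and then — transcribing the estimates built on Lemma~\ref{5.3.5}, Lemma~\ref{estimation lemma for discrete}, and Proposition~\ref{4.6.21} — show that $\set{\Lambda\cdot s:\ s\in S}$ is $(p,p)$-multi-bounded. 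The right-handed form of Lemma~\ref{(p,q)-multi-invariance and absolute value}, together with the Ryll--Nardzewski argument of Theorem~\ref{(p,q)-amenable is amenable}, would then yield a right-invariant mean on $\linfty(S)$, so that $S$ is right-amenable.

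For the cancellative case I would add that $S$ is then also left-cancellative, so both the theorem and its mirror apply; the left identity $e$ above is a two-sided identity, and the inverse-producing argument of the theorem immediately preceding this corollary (building for each $t\in S$ a module map $Q_t$ whose associated element furnishes an inverse of $t$) applies in its right-handed form to exhibit inverses, making $S$ a group, necessarily amenable by the right-amenability already obtained. I expect the main obstacle to be exactly the point where the semigroup case departs from the group case: lacking an inversion $t\mapsto t^{-1}$, a cancellative semigroup admits no analogue of the map $f\mapsto\widetilde{f}$ that, for groups, identifies $\dual{\lspace^{\,p}}$ with a genuine convolution module. Consequently one cannot quote Theorem~\ref{left cancellative and injectivity} as a black box after passing to $S^{\mathrm{op}}$, because the transported dual module is \emph{not} a convolution $\lspace$-module there; the real work lies in verifying that each norm estimate in the proof of that theorem — above all the multi-norm identity resting on Proposition~\ref{4.6.21} — survives faithfully when transcribed to the dual, right-handed setting.
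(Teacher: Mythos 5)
The paper offers no proof of this corollary at all --- it is asserted with the end-of-proof symbol as an immediate left--right mirror of Theorem \ref{left cancellative and injectivity} and the theorem following it --- so the real question is whether your outline can be completed, and I do not think it can in the form you describe. You correctly locate the starting point (flatness means $\dual{(\lspace^{\,p}(S))}$ is injective as a Banach right $\lone(S)$-module) and, to your credit, you isolate the genuine obstruction: the dual action is $(\lambda\cdot\delta_s)(t)=\lambda(st)$, which is \emph{not} the convolution action of $\lone(S^{\mathrm{op}})$ on $\lspace^{\,q}(S^{\mathrm{op}})$, and a semigroup has no analogue of the anti-automorphism $f\mapsto\widetilde{f}$ that repairs this for groups. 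But having named the obstruction you do not overcome it, and your concrete steps do not survive the transcription. The very first one fails: the natural inclusion $\lone(S)\hookrightarrow\dual{(\lspace^{\,p}(S))}$ is not a morphism of right modules for the dual action, since $\delta_r\cdot\delta_s=\chi_{\set{t\,:\ st=r}}$ whereas $\delta_r\star\delta_s=\delta_{rs}$; so the right-handed version of Proposition \ref{module homomorphisms when module is injective} cannot be applied to it to produce $g_0$ with $f=g_0\star f$. A bounded right-module map $Q:\lone(S)\to\dual{(\lspace^{\,p}(S))}$ must instead satisfy $Q(\delta_{rs})(t)=Q(\delta_r)(st)$, the typical examples being $Q(\delta_r)(t)=G(rt)$ for a scalar function $G$, and extracting a left identity --- and later the multi-norm estimate of Proposition \ref{4.6.21}, which must now be run in the $(q,q)$-multi-norm with $q=p'$ on the dual module --- from such maps is exactly the ``real work'' your proposal defers rather than performs.

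A useful test of any argument here is the endpoint $p=1$, which your outline treats on the same footing as $p>1$. If $S$ is a monoid, then $\lone(S)$ is unital, hence projective, hence flat as a left module over itself; taking $S$ to be a non-amenable group, the hypotheses of the corollary are satisfied with $p=1$ and $S$ cancellative, while the conclusion would force $S$ to be amenable. So no proof that works uniformly on $p\in[1,\infty)$ by formal mirroring can exist: either reflexivity of $\lspace^{\,p}(S)$ must enter in an essential way, or the range of $p$ must be restricted (note that the group-case flatness theorem, Theorem \ref{flatness and amenability}, is stated only for $p\in(1,\infty)$, precisely because $\Lone(G)$ is always flat). Your proposal gives no indication of where its argument would --- and must --- break down at $p=1$, which confirms that the decisive steps have not actually been checked.
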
\medskip

\newcommand{\email}{\texttt}

\noindent   H.\ Garth\ Dales, \\
 Department of  Mathematics and Statistics\\
Fylde College\\
University of Lancaster\\
Lancaster LA1 4YF\\
United Kingdom 
\email{g.dales@lancaster.ac.uk}

\medskip

\noindent Matthew\ Daws\\
    Department of Pure Mathematics,\\
University of Leeds, \\ Leeds LS2 9JT,\\  
United Kingdom 
   \email{mdaws@maths.leeds.ac.uk}

\medskip

\noindent Paul Ramsden\\
5 Brookhill Crescent\\
Leeds\\
LS17 8QB\\
United Kingdom
\email{paul@virtualdiagonal.co.uk}

\medskip

\noindent    Hung\ Le\ Pham\\
   School of Mathematics, Statistics, and Operations Research,\\
Victoria University of Wellington, \\ Wellington 6140,\\ New Zealand
   \email{hung.pham@vuw.ac.nz}
     
     \end{document}